%% file: SRPAT_final.tex
\documentclass[letterpaper,11pt,reqno]{amsart} 

\usepackage[portrait,margin=1in]{geometry}
\usepackage{comment}

\input{style.tex}
\usepackage{tikz}
\usepackage{pgfplots}
\pgfplotsset{compat=1.16}
\newcommand{\invisible}[1]{}

\newcommand{\ee}{\mathrm{e}} 
\newcommand{\citesno}[1]{\cite{#1}}
\newcommand{\ch}[1]{{\color{red}{#1}\color{black}}}

\begin{document}

\title[Network evolution with self-reinforcement]{Network evolution with self-reinforcement}

\author[Bhamidi]{Shankar Bhamidi$^1$}
\address{$^1$Department of Statistics and Operations Research, 304 Hanes Hall, University of North Carolina, Chapel Hill, NC 27599}
\email{bhamidi@email.unc.edu}

\author[van der Hofstad]{Remco van der Hofstad$^2$}
\address{$^2$Department of Mathematics and Computer Science,  Eindhoven University of Technology, Eindhoven, The Netherlands}
\email{rhofstad@win.tue.nl}

\author[den Hollander]{Frank den Hollander$^3$}
\address{$^3$Mathematical Institute, Leiden University, Einsteinweg 55, 2333 CC Leiden, The Netherlands}
\email{denholla@math.leidenuniv.nl}

\author[Ray]{Rounak Ray$^4$}
\address{$^4$Division of Applied Mathematics, Brown University, 182 George Street, RI 02912}
\email{rounak\_ray@brown.edu}

\date{\today}

\subjclass[2010]{Primary: 05C80, 60C05, 60J80, 60K35}
\keywords{Dynamic networks, preferential attachment, self-reinforcement, growing random trees, local convergence, continuous-time branching processes}

\begin{abstract}
We study a new class of preferential attachment trees with \emph{self-reinforcement}. At each time, each vertex is assigned a weight equal to the cumulative sum over past times of an affine function of its degree. A new vertex attaches itself via a single edge to an already present vertex with a probability proportional to the current weight of that vertex. This ``integrated popularity'' rule builds long memory directly into the attachment mechanism, thereby destroying the Markov and partial-exchangeability features that underlie the classical analysis of affine preferential attachment models. More broadly, the model connects to applied-probability work on long-memory self-interacting processes (such as the elephant random walk), emphasizing how non-Markovian reinforcement reshapes asymptotic behaviour.

Despite this loss of structure, we identify an explicit exponent $\phi=\phi(\delta)$ governing both local and global growth: typical degrees at time $n$ scale as $n^{1/\phi}$, and the empirical degree distribution converges to a power-law with a tail exponent $\phi+1$. We further prove Benjamini--Schramm local convergence to an infinite random rooted tree characterized via an embedded continuous-time branching process. The limiting tree is a \texttt{sin}-tree, and is \emph{not} the P\'olya-type limiting tree arising in the non-reinforced setting. Our results provide a tractable probabilistic description of a natural ``memoryful'' network-growth mechanism, and quantify precisely how reinforcement renormalizes the classical preferential-attachment exponents.
\end{abstract}

\maketitle


\section{Introduction}
\label{sec:intro}

The last few years have seen the formulation of a host of new mathematical models capturing macroscopic properties of real-world networks (see e.g.~\citesno{albert2002statistical,newman2003structure,newman2010networks,bollobas2001random,durrett-rg-book,van2017random} and the references therein). Many networks are not static objects, but evolve on the same time scale as the dynamics they support. This observation has motivated a large literature on \emph{temporal} (or dynamic) networks, where one tracks both the graph topology and the time-ordering of interactions (see e.g.~\citesno{holme2012temporal,masuda2016guidance} and the references therein). A common modeling theme is that microscopic rules for the arrival of vertices/edges and their attachment choices can produce macroscopic signatures such as heavy-tailed degree distributions, aging effects, and strong correlations between vertex age and connectivity.

Preferential attachment is a paradigmatic example: it implements ``cumulative advantage'' by biasing attachment toward already well-connected vertices, and yields explicit scaling exponents and local limits in different variants. However, in applications the attraction of a vertex is often not well represented by its \emph{current} degree alone, and may depend on its \emph{interaction history} (visibility accumulated over time, repeated exposure, persistent reputation, or reinforcement through recommendation loops). This motivates growth mechanisms with genuine \emph{memory}. From a probabilistic viewpoint, such mechanisms naturally connect to the broad theory of reinforced stochastic processes (P\'olya urns, reinforced walks, self-interacting processes), where history-dependent choice rules can change scaling and limit objects (see~\cite{pemantle2007survey} for an overview).

In this paper we study a growing tree model in which attachment is biased not by instantaneous degree, but by an \emph{integrated} degree signal: the weight of a vertex at time $n$ is the cumulative sum, over all past times, of an affine function of its degree. Equivalently, each time a vertex gains a new neighbor, this event permanently increases its future attractiveness, and the effect compounds over subsequent times through repeated inclusion in the running sum. This ``integrated popularity'' mechanism is natural in settings where attention is aggregated over long horizons: a webpage's attractiveness can scale with cumulative traffic; a user's visibility can depend on accumulated impressions rather than current follower count; and algorithmic ranking system can reinforce items by using time-averaged engagement rather than purely instantaneous metrics.

The self-reinforced rule should be contrasted with two classical ways to enrich preferential attachment. One direction changes the attachment \emph{function} (sublinear, affine, fitness, aging), but still uses the current network state as sufficient information. Another direction, closer in spirit to the ``information constraint'' viewpoint, assumes that incoming vertices act on \emph{outdated} or \emph{partial} information about the network, producing nontrivial macroscopic effects even when the local rule remains preferential attachment. For example, in the delay framework of~\citesno{BBDS04_meso,BBDS04_macro}, a new vertex attaches by using the network observed at a delayed time, yielding a different class of limits driven by the interplay between growth and information latency. Our model takes an orthogonal route: vertices see the present network, but the attachment propensity itself carries the entire past through a cumulative functional of the degree process. This is precisely what breaks the standard time-inhomogeneous Markov structure for a fixed vertex degree, and invalidates the usual partial-exchangeability or P\'olya-urn representations available for non-reinforced affine preferential attachment. 

We next introduce the model studied in this paper, informally describe the main contributions of the paper, and outline the remainder of the paper.


\subsection{Network evolution with self-reinforcement}
\label{sec:net-mod}

Start at time $n=0$ with an isolated vertex $v_0$. At time $n=1$, a vertex $v_1$ comes in and attaches itself to vertex $v_0$ via a single edge. At subsequent times the growth is as follows. For $n \geq 1$, let 
\begin{equation}
\theta(v_i,n) =  \text{ weight of vertex } v_i \text{ at time } n, \qquad  0 \leq i \leq n. 
\end{equation}
At time $n \geq 1$, a vertex $v_{n+1}$ enters and attaches itself via a single edge to one of the vertices already present in the graph with a probability that is proportional to the \emph{current weight} of that vertex. 

The specific choice of weights considered in this paper is the following. For $n \geq 1$, let 
\[
d(v_i,n) = \text{ degree of vertex } v_i \text{ at time } n, \qquad  0 \leq i \leq n.
\] 
Note that $d(v_0,1) = d(v_1,1) = 1$, while $d(v_i,j) = 0$ for $1 \leq j < i$ and $d(v_i,i) = 1$ for all $i \geq 2$. Let $\{n+1 \sim i\}$ denote the event that vertex $v_{n+1}$ attaches itself to vertex $v_i$. For every $n \geq 1$, the probability of this event equals
\[
\frac{\theta(v_i,n)}{\sum_{j=0}^n \theta(v_j,n)}, \qquad 0 \leq i \leq n,
\]
where the weight is chosen to be 
\begin{equation}
\label{eq:weightchoice}
\theta(v_i,n) = \sum_{m=i}^{n} \big(d(v_i,m) + \delta\big), \qquad 0 \leq i \leq n,
\end{equation}
with $\delta > -1$ a parameter. Because at time $m$ there are $m+1$ vertices and $m$ edges, we have
\begin{equation}
\sum_{j=0}^n \theta(v_j,n) = \sum_{j=0}^n \sum_{m=j}^{n} \big(d(v_j,m) + \delta\big) 
= \sum_{m=0}^n 2m + \delta \sum_{m=0}^n m = n(n+1)(1+\tfrac12\delta).
\end{equation}

In what follows, $(\cT_n)_{n\geq 1}$ denotes the growing sequence of random trees where, by convention, $\cT_0 = \cT_1$), and $\cL$ denotes its probability law. Let $\{\FF_n\}_{n \geq 1}$ be the filtration defined by 
\[
\FF_n = \sigma\big(d(v_i,m)\colon\,1 \leq m \leq n,\, 0 \leq i \leq n\big).
\]


\subsection{Main contributions of this paper}

Our main results provide a sharp and explicit scaling theory for the growth mechanism with memory. We identify a deterministic exponent $\phi=\phi(\delta)$ that simultaneously governs: (i) the polynomial growth scale of typical degrees; (ii) the tail exponent of the limiting empirical degree distribution. We establish (Benjamini--Schramm) local convergence to an infinite random rooted sin-tree, which admits a characterization in terms of an embedded continuous-time branching process. The limiting local object differs from the P\'olya-type local limits in the non-reinforced model, reflecting that reinforcement changes the effective reproduction mechanism along typical ancestral lines.

From a technical perspective, the key challenge is that the attachment weights are \emph{path functionals} of the degree evolution. This memory prevents the degree of a fixed vertex being treated as a Markov chain and breaks the classical continuous-time branching-process embedding used for standard preferential attachment models. Our approach overcomes this by: (a) identifying the correct time scale induced by the integrated weights; (b) developing a coupling/embedding that restores an {\em approximate} branching structure at the local level, allowing us to read off both local limits and degree exponents.


\subsection{Outline of the remainder of the paper}

Section~\ref{sec:main-res} introduces the continuum limit object via a memory-driven branching process (Section~\ref{sec:lim-obj-macro}), states the main local and degree results (Section~\ref{sec:mainresults}), provides intuition (Section~\ref{intuition}), and offers a discussion (Section~\ref{sec:discussion}). Section~\ref{sec:local-wll} recalls the local-convergence framework for rooted trees and \texttt{sin}-trees, including the fringe decomposition tools used later. Section~\ref{edgedesc} gives an equivalent edge-based construction of the limiting branching process together with moment bounds needed in the proofs. Finally, Section~\ref{sec:proofs} contains the proofs: local convergence in Section~\ref{sec:proof-main-local}, tail behavior in Section~\ref{sec:tail-exp}, and root-degree growth in Section~\ref{sec:max-deg-proof}.


\section{Results}
\label{sec:main-res}

We need some notation to describe the limit objects.


\subsection{A branching process driven by point processes with memory}
\label{sec:lim-obj-macro}

Recall that we consider the \emph{self-reinforcement} as an affine preferential attachment class with attachment function $f(k) = k+\delta$ for $k\geq 1$, for a fixed parameter $\delta>-1$. In the construction of continuous-time branching processes~\citesno{jagers-ctbp-book,jagers-nerman-1,jagers-nerman-2}, one formulation is via a point process $\xi$ with inter-arrival times $(\sE_{(i-1) \leadsto i})_{i\geq 1}$, where, conceptually, for each $i\geq 1$, $\sE_{(i-1) \leadsto i}$ (in the branching process) has the interpretation as the amount of time for a vertex to go from having $i-1$ to $i$ children. We will specify a point process via a recursive construction of its inter-arrival times. The construction proceeds as follows: 
\begin{enumeratea}
\item 
\emph{Base case $\sE_{0 \leadsto 1}$:}  Define the hazard function
\begin{align}
\label{eqn:h01-hazard}
h_{0\leadsto 1}(x) = \frac{1+\delta}{1+\tfrac12\delta}\,(1-\eee^{-x}), \qquad x \geq 0.
\end{align}
Let $\sE_{0\leadsto 1}$ be the random variable on $[0,\infty]$ with the above hazard rate, so that, for any $x$, $\pr(\sE_{0\leadsto 1} > s) = \exp(-\int_0^s h_{0\leadsto 1}(x)\, \dd x)$.  
\item 
\emph{General case (general $i$):} 
Given $(\sE_{(j-1) \leadsto j})_{1\leq j \leq k-1}$, define the hazard function
\begin{align}
\label{eqn:h-gen-hazard}
h_{(k-1)\leadsto k}(x) = \frac{1+\delta}{1+\frac{1}{2}\delta}(1- \eee^{-(x+\sigma_{k-1})}) + \frac{1}{1+\frac{1}{2}\delta}\sum_{j=1}^{k-1}(1- \eee^{- (\sigma_{k-1} - \sigma_j +x)}), \qquad x \geq 0,
\end{align}
where $\sigma_j = \sum_{i=1}^j \sE_{i-1 \leadsto i}$. Let $\sE_{(k-1) \leadsto k} >0$ a.s.\ be the random variable with the above hazard rate, so that, for any $x$, $\pr(\sE_{(k-1) \leadsto k} > s\mid (\sigma_j)_{j=1}^{k-1}) = \exp(-\int_0^s h_{(k-1)\leadsto k}(x)\, \dd x)$.
\end{enumeratea}
 
Let $\cP_{\Sr, \delta}$ be the above birth process. As shown in Theorem~\ref{thm:main-local}, the local limit of our network model is given by a continuous-time branching process, with reproductions driven by $\cP_{\Sr, \delta}$, stopped at an independent exponential time. We next define these objects.
   
\begin{defn}[BP in the macroscopic regime]
\label{def:limit-bp-macro}
Let $\BP_\Sr(\cdot)$ be the continuous-time branching process, starting from $1$ individual at time $t=0$, where each individual has offspring distribution $\cP_\Sr$ defined above. Let $T_{1}$ be an $\exp(1)$ random variable, independent of $\BP_\Sr$, and write $\varpi_{\Sr}$ for the distribution of $\BP_\Sr(T_1)$, viewed as a random finite rooted tree, where only the genealogical information between individuals in $\BP_{\Sr, \delta}(T_1)$ is retained. 
\end{defn}


\subsection{Main results}
\label{sec:mainresults}

We start by describing the local limit of our self-reinforced tree. Recall from~\eqref{eqn:deg-count} that $N_{k}(n)$ denotes the number of vertices in $\cT_n$ with degree exactly $k$, for $k\geq 1$. 

\begin{thm}[Local limit, macroscopic regime]
\label{thm:main-local}The sequence of random trees $(\cT_n)_{n\geq 1}$ with law $\cL(\delta)$ converges in probability in the local convergence sense (see Definition~\ref{def:local-weak} and \eqref{it:fringe-exp}) to the unique {\tt sin}-tree with distribution $\varpi_{\Sr,\delta}$ (see Definition~\ref{def:limit-bp-macro}). In particular, the degree distribution satisfies 
\[
 \frac{N_k(n)}{n} \probc \pr\left(\sum_{j=1}^{k-1} \sE_{(j-1)\leadsto j} 
\leq T_1 < \sum_{j=1}^{k} \sE_{(j-1)\leadsto j}\right) = p_{\Sr, \delta}(k), \qquad k\geq 1,
 \]
where $T_1$ is a rate-$1$ exponential random variable independent of $(\sE_{(j-1) \leadsto j})_{j\geq 1}$. 
\end{thm}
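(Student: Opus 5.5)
The plan is to use the fringe-decomposition criterion recalled in Section~\ref{sec:local-wll} (condition \eqref{it:fringe-exp}). By Aldous' theory, it suffices to show the following: for every finite rooted tree $\mathbf{t}$, the fraction of vertices $v\in\cT_n$ whose fringe subtree (the tree of descendants of $v$) is isomorphic to $\mathbf{t}$ converges in probability to $\varpi_{\Sr,\delta}(\mathbf{t})$. We also need $\varpi_{\Sr,\delta}$ to be a fringe distribution, i.e.\ the expected number of children of the root under $\varpi_{\Sr,\delta}$ equals $1$. The degree statement is then a corollary. A non-root vertex has degree $k$ iff its fringe root has $k-1$ children. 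In $\BP_\Sr(T_1)$ this is exactly the event $\sigma_{k-1}\le T_1<\sigma_k$, and the root $v_0$ has negligible effect.

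\textbf{Step 1: the heuristic time change, made exact.} Fix a vertex $v_i$ and write $t=\log(n/i)$ for its age on logarithmic time. Suppose its children arrived at times $m_1<\dots<m_{k-1}$. Then the weight satisfies
\[
\theta(v_i,n)=(1+\delta)(n-i+1)+\sum_{j}(n-m_j+1).
\]
Dividing by the total weight $n(n+1)(1+\tfrac12\delta)$ and multiplying by $n$, which converts steps into log-time units, gives the rate
\[
\frac{1+\delta}{1+\frac12\delta}\bigl(1-\ee^{-t}\bigr)+\frac{1}{1+\frac12\delta}\sum_j\bigl(1-\ee^{-(t-\sigma_j)}\bigr),
\]
up to $O(1/i)$ errors, where $\sigma_j=\log(m_j/i)$. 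This is exactly the hazard \eqref{eqn:h-gen-hazard}.

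The crucial structural observation is that \emph{the weight of any vertex in the subtree of $v_i$ depends only on the history of that subtree}. Consequently, the event that the fringe of $v_i$ at time $n$ equals $\mathbf{t}$ with prescribed attachment times has an explicit probability. It is a product of factors $\theta/(\text{total})$ at the attachment steps and of factors $1-\sum_{u\in\text{subtree}}\theta(u,m)/(\text{total})$ at all other steps. Summing over attachment times, taking logs, and passing to a Riemann-sum limit, I will show the following. Uniformly over $i\in[\varepsilon n,n]$, this probability converges to $\pr(\BP_\Sr(\log(n/i))\cong\mathbf{t})$, where the subtrees of distinct children evolve as independent copies of $\cP_\Sr$ started at their birth times. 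Since $i/n$ is asymptotically uniform, $\log(n/i)$ is asymptotically $\mathrm{Exp}(1)$. The vertices with $i<\varepsilon n$ contribute at most $\varepsilon$. Averaging therefore yields $\E[\#\{v: \text{fringe}\cong\mathbf{t}\}]/n\to\varpi_{\Sr,\delta}(\mathbf{t})$.

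\textbf{Step 2: concentration.} I will use a second-moment argument. For $i<j$ with disjoint fringes, I will write the joint fringe probability as the analogous product over both subtrees. The only interaction is through the complementary factors, and these are multiplicative up to $1+O(|\mathbf{t}|^2/n)$. Pairs with nested fringes number only $O(n)$ for fixed $|\mathbf{t}|$. Hence the variance is $o(n^2)$, and Chebyshev's inequality gives convergence in probability.

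\textbf{Step 3: properness of the limit.} $\BP_\Sr$ is non-explosive because hazards grow at most linearly in the number of children, so $\BP_\Sr(T_1)$ is a.s.\ finite. Next I need $\E[\#\text{children of root under }\varpi]=1$. Since $n$ vertices carry $n$ edges, the empirical mean number of children equals $1$. Together with Step 1 this gives the limiting mean $\le1$ by Fatou; equality then follows from uniform integrability of the degree of a uniform vertex. For the latter I will bound the number of children by $\mathrm{Poisson}$-type domination using the hazard bound $h\le (1+\delta+k)/(1+\frac12\delta)$. Alternatively, one can compute directly that $\E[\xi(T_1)]=1$, i.e.\ that $1$ is the Malthusian-type normalisation for the $\ee^{-t}$-discounted reproduction. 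Aldous' theorem then identifies the limit as the unique sin-tree with fringe law $\varpi_{\Sr,\delta}$.

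\textbf{Main obstacle.} I expect the main difficulty to be Step 1's error control. The attachment probabilities are path functionals, so approximating the discrete product by the continuous hazard must hold uniformly over all attachment-time configurations of a fixed tree. The delicate regimes are attachments occurring shortly after a vertex's birth, where $1-\ee^{-x}$ is small and relative errors of order $1/i$ matter. My first attempt will be to sandwich the discrete weights between $(1\pm\varepsilon)$ multiples of the continuous ones on $i\ge\varepsilon n$, and to absorb the short-age window $m_j-i\le \varepsilon i$ into an $O(\varepsilon)$ error using that its hazard mass is $O(\varepsilon^2)$.
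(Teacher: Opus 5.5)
Your proposal is correct, and two of its three steps match the paper. Step~1 is the paper's proof of Proposition~\ref{prop:expec-fringe}: the same explicit product of edge factors $\theta/D^\delta$ and complementary factors, evaluated with marks $a=\tau/n$ and matched to the density of $\BP_{\Sr}$ under $\sigma=\log a$. Your explicit handling of the short-age window is a detail the paper passes over. The fringe property in Step~3 is Lemma~\ref{lem:mean-one}.

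The genuine difference is Step~2. The paper does not estimate second moments. It passes from expectation to probability by citing Aldous for extremality of fringe laws produced by a branching process stopped at an independent $\exp(1)$ time, then applies Theorem~\ref{thm:aldous-efr-pfr} and Lemma~\ref{ftoeflemma}.

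Your second-moment route works here because the total weight $n(n+1)(1+\tfrac12\delta)$ is deterministic. Let $F_i$ be the event that the fringe of $v_i$ is $\bt$, and let $A_m,B_m$ be the weight fractions of the two subtrees at step $m$. For disjoint fringes of $v_i,v_j$ with $i<j$, the joint configuration probability equals the product of the single-fringe ones times $\prod_m \frac{1-A_m-B_m}{(1-A_m)(1-B_m)}\le 1$. It also carries at most $2|\bt|$ factors $(1-A_m)^{-1}$ or $(1-B_m)^{-1}$ at attachment steps $m\ge j$, each equal to $1+O(|\bt|^2/m)$. Summing over compatible configurations gives $\pr(F_i\cap F_j)\le (1+O(|\bt|^3/j))\,\pr(F_i)\pr(F_j)$ for $j$ large. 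Pairs $i\neq j$ with nested fringes cannot both have fringe $\bt$. Hence the number of vertices with fringe $\bt$ has variance $O(n)$.

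What each route buys: yours is a self-contained, quantitative concentration step that avoids verifying extremality, which the paper only asserts by reference. The paper's route is shorter and does not depend on the normalisation being deterministic.

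One correction to Step~3. The Yule-type domination from $h\le \gamma(1+\delta+k)$, with $\gamma=1/(1+\tfrac12\delta)$, does not give uniform integrability when $\delta\le 0$. In that case $\gamma\ge 1$ and $\E[\mathrm{e}^{\gamma T_1}]=\infty$. Likewise, the discrete bound $d(v_i,n)\lesssim (n/i)^{\gamma}$ is not integrable over uniform $i$. So rely on your direct computation, as the paper does. Set $\mu(t)=\E[\cP_{\Sr}(t)]$ and $L=\int_0^\infty \mathrm{e}^{-t}\mu(\mathrm{d}t)=\E[\cP_{\Sr}(T_1)]$. Taking expectations of the compensator and Laplace-transforming at $1$ gives $L=\tfrac12\gamma(1+\delta)+\tfrac12\gamma L$. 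Since $\tfrac12\gamma=1/(2+\delta)<1$, iterating shows $L<\infty$, and then $L=\gamma(1+\delta)/(2-\gamma)=1$.
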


\begin{figure}[htbp]
\centering
\begin{minipage}{0.48\textwidth}
\centering
\includegraphics[width=\linewidth]{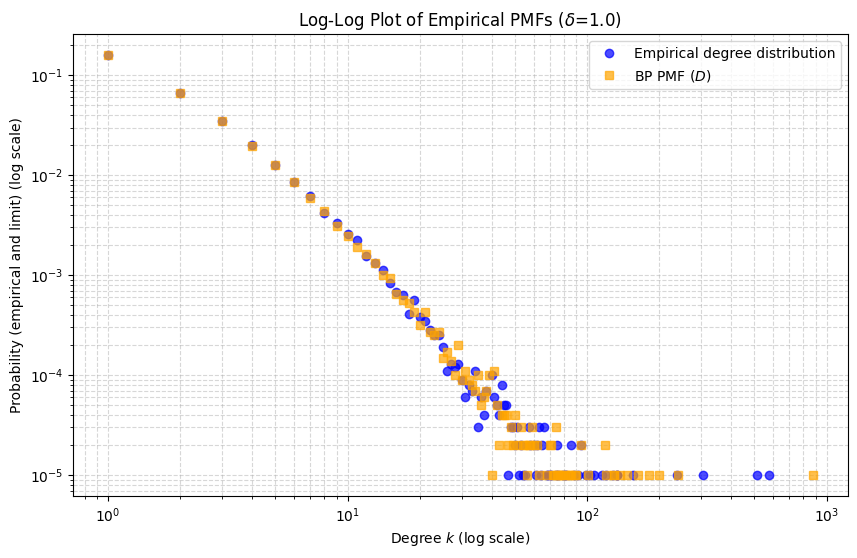}
\end{minipage}
\hfill
\begin{minipage}{0.48\textwidth}
\centering
\includegraphics[width=\linewidth]{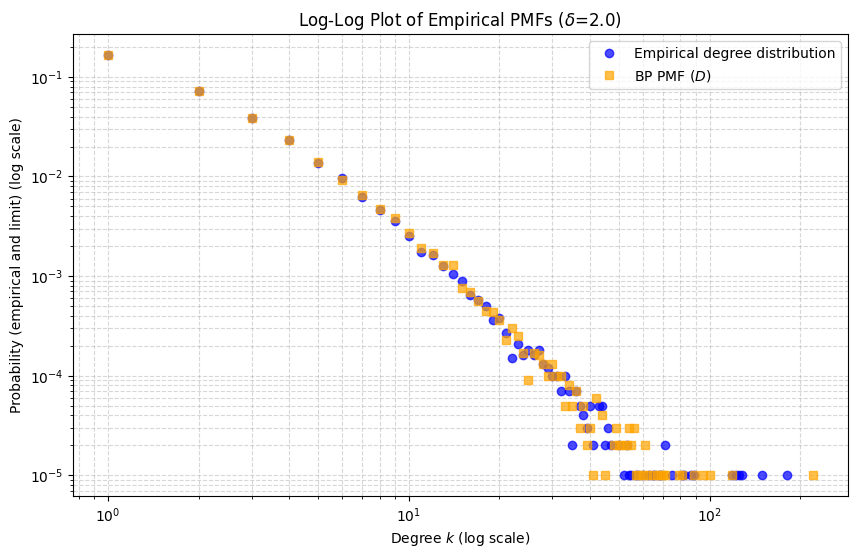}
\end{minipage}
\caption{Empirical degree distribution and limit probability mass function for $\delta=1$ and $\delta=2$.}
\label{fig:delta_pair}
\end{figure}

\begin{figure}[htbp]
\centering
\begin{minipage}{0.48\textwidth}
\centering
\includegraphics[width=\linewidth]{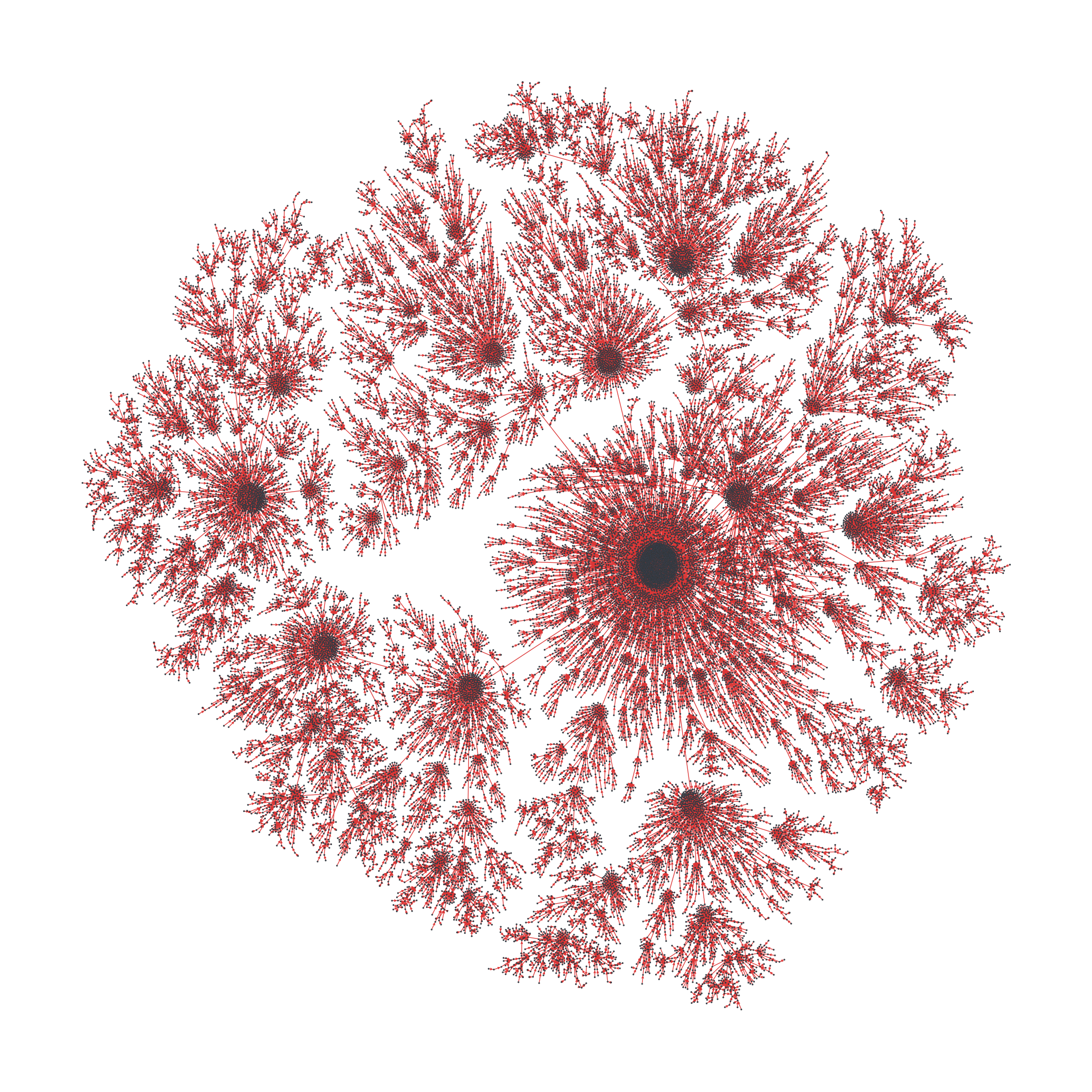}
\end{minipage}
\hfill
\begin{minipage}{0.48\textwidth}
\centering
\includegraphics[width=\linewidth]{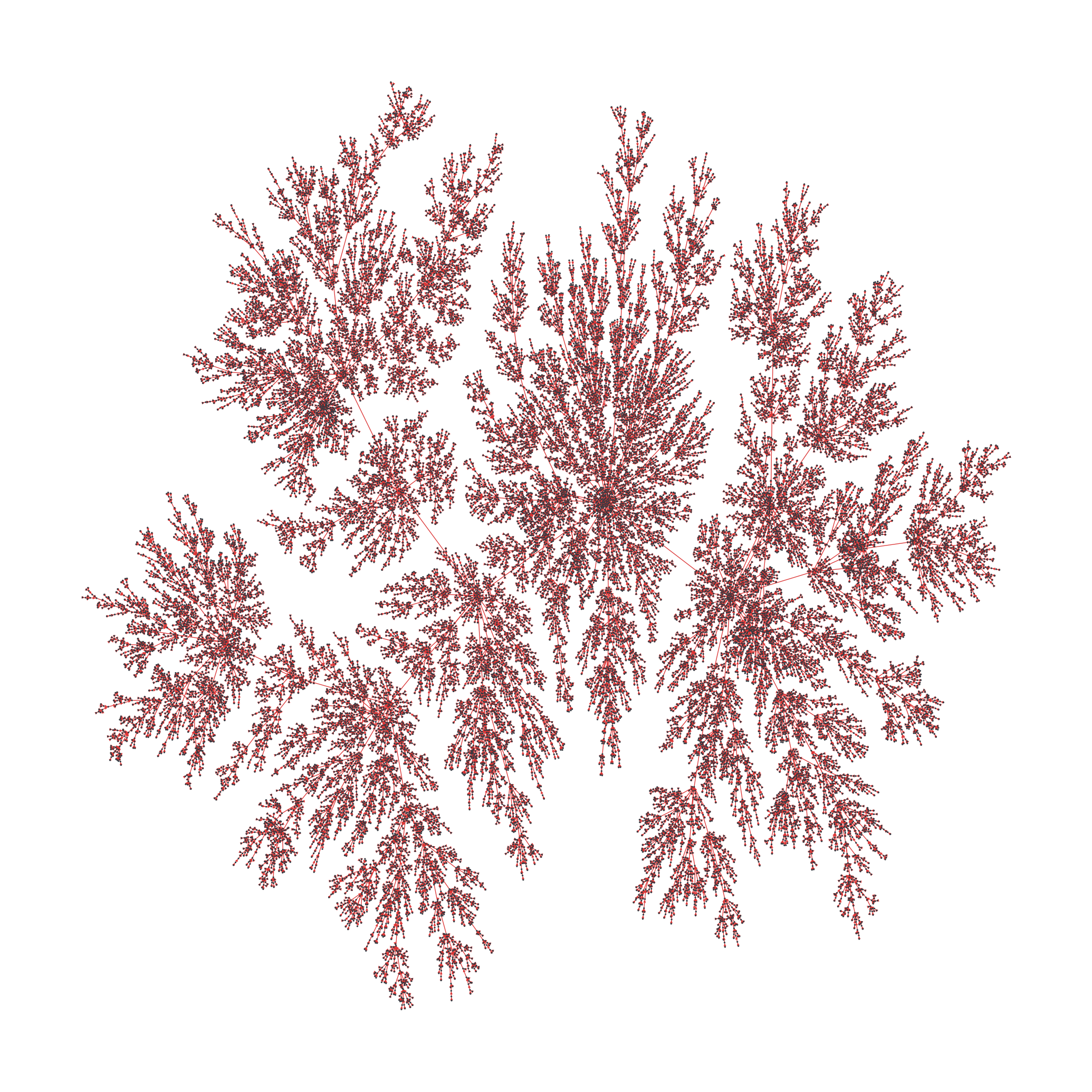}
\end{minipage}
\caption{\small Two realizations of preferential attachment trees with self-reinforcement grown to the same size ($n=40,000$), illustrating the effect of the parameter $\delta$ on network geometry. The left panel corresponds to $\delta=0$ and exhibits a markedly more hub-dominated structure, with attachment concentrating more strongly around a few high-degree vertices. The right panel corresponds to $\delta=10$, where attachment is more diffuse and the resulting tree appears more evenly spread, with less pronounced hubs.}
\label{fig:delta_pair_40000}
\end{figure}

The above result immediately implies the following. Let $\vA_n$ denote the adjacency matrix of $\cT_n$. Let $(\lambda_i^{\sss(n)})_{1\leq i\leq n}$ denote the set of eigenvalues of $\vA_n$, and $\hat{\mu}_n = n^{-1} \sum_{i=1}^n \delta_{\lambda_i^{\sss(n)}}$ the empirical spectral distribution of $\vA_n$ (where $\delta_{(\cdot)}$ denotes the Dirac delta-function). 

\begin{cor}[Limiting empirical spectral distribution]
\label{cor:rand-adj}
There exists a deterministic distribution $\mu_\infty$ (whose specific form depends on the parameter $\delta$) such that the empirical spectral distribution satisfies $\hat{\mu}_n \convd \mu_\infty$. The limit distribution has an infinite set of atoms in $\bR$.
\end{cor}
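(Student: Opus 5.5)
The corollary will follow from Theorem~\ref{thm:main-local} together with the general theory of spectra of locally convergent sparse graphs. The key input is the result of Bordenave and Lelarge: if a sequence of finite (random) trees converges locally in probability to a unimodular limit, then the empirical spectral distributions of the adjacency matrices converge. The limit is the expected spectral measure at the root of the limiting rooted tree. Trees need no moment or degree-boundedness conditions for this, because the spectral measure of the adjacency operator of a locally finite tree is well defined via the self-adjointness result of Bordenave--Lelarge. So the first step is to note that $\cT_n$ rooted at a uniform vertex converges locally in probability to the \texttt{sin}-tree with law $\varpi_{\Sr,\delta}$; the fringe-to-local passage via \eqref{it:fringe-exp} is already part of the theorem. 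I will then set $\mu_\infty := \E[\mu_{\mathcal{T}_\infty,o}]$, where $\mu_{\mathcal{T},o}$ is the spectral measure of the root vector $e_o$. The limit $\mu_\infty$ is deterministic because the local convergence holds in probability to a deterministic law. I will then obtain $\hat\mu_n\convd \mu_\infty$ in probability.

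For the atoms, I will use the standard argument (Bhamidi--Evans--Sen) based on finite fringe subtrees. The first step is to show that each finite rooted tree $t$ appears as a pendant fringe subtree with positive limiting density $c_t>0$. By Theorem~\ref{thm:main-local}, this density equals $\varpi_{\Sr,\delta}(t)$ (times the probability of a suitable attachment shape). It is positive because the hazards in \eqref{eqn:h01-hazard}--\eqref{eqn:h-gen-hazard} are strictly positive and finite for $x>0$, so $\BP_\Sr(T_1)$ equals any given finite tree with positive probability.

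The second step takes two disjoint copies of a fringe tree hanging off the same parent, or more simply uses the fact that $t$ has an eigenvector vanishing at its root. For example, take $t$ to be a star or path whose eigenvalue $\lambda$ has an eigenvector vanishing at the attaching vertex. Every such copy then contributes an eigenvector of $\vA_n$ with eigenvalue $\lambda$. Hence $\hat\mu_n(\{\lambda\})\ge c_t/2$ for large $n$ with high probability. The atom passes to the limit because local convergence gives the corresponding lower bound for $\E[\mu_{\mathcal{T}_\infty,o}(\{\lambda\})]$. The root-spectral measure of a tree containing such a finite pendant piece near $o$ has an atom at $\lambda$, since the localized eigenvector has nonzero mass at $o$ with positive probability. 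Letting $t$ range over paths of length $m$, each with a pendant copy attached, gives the eigenvalues $2\cos(\pi j/(m+1))$. This yields infinitely many distinct atoms.

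The main obstacle is making this last step rigorous: showing the atom survives in $\mu_\infty$ rather than merely in each $\hat\mu_n$. I will handle it by working directly in the limit tree, via the fringe decomposition. With positive probability, the root's $\ell$-neighbourhood contains a finite pendant subtree carrying a compactly supported eigenvector $\psi$ with $\psi(o)\neq 0$. Then $\mu_{\mathcal{T}_\infty,o}(\{\lambda\})\ge |\psi(o)|^2/\|\psi\|^2>0$, by the Bhamidi--Evans--Sen lemma that compactly supported eigenvectors produce atoms of the root spectral measure.
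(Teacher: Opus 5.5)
Your proposal is correct in substance, but it takes a different route from the paper. The paper's proof is a one-line citation: it feeds the extended fringe convergence of Theorem~\ref{thm:main-local} directly into the Bhamidi--Evans--Sen spectral theorem for fringe-convergent trees. That theorem is proved by Cauchy interlacing on principal submatrices, needs no operator theory, and also gives convergence of the individual point masses; the atoms are left to that reference. You instead pass to Benjamini--Schramm convergence in probability and then invoke Bordenave--Lelarge. The passage is fine, since the fraction of vertices within distance $r$ of the root of $\cT_n$ vanishes in the \texttt{sin}-tree limit. Note, however, that the Bordenave--Lelarge essential self-adjointness result applies to unimodular random trees such as this local limit, not to arbitrary locally finite trees as your wording suggests. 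Your route buys the explicit description $\mu_\infty=\E[\mu_{\cT_\infty,o}]$. More usefully, it actually verifies something the paper leaves implicit: $\varpi_{\Sr,\delta}$ charges the fringe ``root with exactly two children, each carrying a copy of $t$'', because all inter-birth times have positive densities. Since fringes of distinct such vertices are disjoint, the eigenvectors $(\psi,-\psi)$ are orthogonal and give multiplicity at least $cn$ with high probability. The paper's route buys working directly with the mode of convergence it proves, plus convergence of point masses.

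Two small corrections. First, your ``more simply'' variant fails for a path attached at an endpoint, since the eigenvectors $\sin(\pi jk/(m+1))$ of a path never vanish at its ends. A star attached at its centre gives only the atom at $0$. It is the twin-copy construction that yields the infinitely many atoms $2\cos(\pi j/(m+1))$. Second, the step you call the main obstacle is immediate. The set $\{\lambda\}$ is closed, so the portmanteau theorem, applied along an a.s.\ convergent subsequence, gives $\mu_\infty(\{\lambda\})\geq\limsup_n\hat\mu_n(\{\lambda\})$. The separate argument in the limit tree is therefore unnecessary.
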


\noindent
The proof of this corollary follows by combining the extended fringe convergence in Theorem~\ref{thm:main-local} with~\cite{bhamidi2012spectra}*{Theorem 4.1}. 

The goal of the next few results is to understand the tail behavior of the degree distribution. Without self-reinforcement, the degree exponent, i.e., the power-law rate of decay of the tail probabilities of the limiting degree distribution, is $2+\delta$~\citesno{barabasi1999emergence,bollobas2001degree}. The edge branching process construction described in Section~\ref{edgedesc} will play a major role, both in the proof of Theorem~\ref{thm:tail} and in the analysis of the degree exponents below. 

The next theorem describes tail asymptotics for the limit degree distribution. We will use $p_{\Sr, \delta}(\geq k) = \sum_{l=k}^\infty p_{\Sr, \delta}(l)$ for the tail of the degree distribution. Define
\begin{equation}
\label{phidef}
\phi = \phi(\delta) = \tfrac12(1+\tfrac12\delta) \left[1 + \sqrt{1+4(1+\tfrac12\delta)^{-1}}\,\right].
\end{equation}

\begin{thm}[Tail exponents]
\label{thm:tail}
There exist finite positive constants $C_1, C_2$ such that 
\[
C_{2} k^{-\phi(\delta)} \leq p_{\Sr, \delta}(\geq k) \leq C_{2} k^{-\phi(\delta)}, \qquad k \geq 1.
\]
\end{thm}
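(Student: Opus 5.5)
The identity $p_{\Sr,\delta}(\geq k)=\pr\big(\sigma_{k-1}\le T_1\big)=\pr\big(N(T_1)\ge k-1\big)$ reduces the claim to growth estimates for the counting process $N(t)=\#\{j\colon \sigma_j\le t\}$ of the birth process $\cP_{\Sr,\delta}$ (this identity is read off from Theorem~\ref{thm:main-local}). Here $T_1$ is an independent $\exp(1)$ time. The plan is to show that $N(t)$ grows like $\ee^{\alpha t}$ with $\alpha=1/\phi$, with matching moment control. Then $\pr(N(T_1)\geq k)\asymp \pr(T_1\ge \phi\log k)=k^{-\phi}$.

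\textbf{Step 1: a two-dimensional Markov structure.} Write $c_1=(1+\delta)/(1+\tfrac12\delta)$ and $c_2=1/(1+\tfrac12\delta)$. By \eqref{eqn:h-gen-hazard}, the intensity of $N$ at time $t$ is
\[
\lambda(t)=c_1(1-\ee^{-t})+c_2\big(N(t)-Y(t)\big),\qquad Y(t)=\sum_{j\colon \sigma_j\le t}\ee^{-(t-\sigma_j)}.
\]
The process $Y$ satisfies $\dd Y=-Y\,\dd t+\dd N$. Hence $Z:=N-Y=\int_0^t Y(s)\,\dd s$ satisfies $\dd Z=(N-Z)\,\dd t$, and $(N,Z)$ is a Markov process. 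Its compensated drift is linear,
\[
\dd\begin{pmatrix}N\\ Z\end{pmatrix}\approx\begin{pmatrix}0&c_2\\ 1&-1\end{pmatrix}\begin{pmatrix}N\\ Z\end{pmatrix}\dd t+\begin{pmatrix}c_1(1-\ee^{-t})\\0\end{pmatrix}\dd t .
\]
The Perron eigenvalue of this matrix solves $\alpha^2+\alpha-c_2=0$, equivalently the Malthusian equation $c_2/(\alpha(\alpha+1))=1$. This gives
\[
\alpha=\tfrac12\big(\sqrt{1+4c_2}-1\big),\qquad 1/\alpha=\frac{\sqrt{1+4c_2}+1}{2c_2}=\phi(\delta)
\]
by \eqref{phidef}. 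The other eigenvalue is negative.

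\textbf{Step 2: first and second moments.} Take the left eigenvector $(a,b)$ with $a,b>0$. Then
\[
M(t)=\ee^{-\alpha t}\big(aN(t)+bZ(t)\big)-\int_0^t \ee^{-\alpha s}a c_1(1-\ee^{-s})\,\dd s
\]
is a martingale. Solving the linear ODE for the means gives $\E N(t)\asymp \ee^{\alpha t}$. The quadratic variation of $M$ is $\int \ee^{-2\alpha s}a^2\,\dd N(s)$, whose mean is $\int \ee^{-\alpha s}\,\dd s<\infty$. Hence $\E M(t)^2$ is bounded, and $\E N(t)^2\le C\ee^{2\alpha t}$; here $Z\le N$ is used to pass from $aN+bZ$ back to $N$.

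\textbf{Step 3: lower bound.} Fix $t_k=\phi\log k+C_0$. Paley--Zygmund gives $\pr(N(t_k)\ge \tfrac12\E N(t_k))\ge (\E N)^2/(4\E N^2)\ge c>0$. With $C_0$ large, $\tfrac12\E N(t_k)\ge k$. Since $N$ is nondecreasing and $T_1$ is independent of $N$,
\[
p_{\Sr,\delta}(\geq k)\ge c\,\pr(T_1\ge t_k)=c\,\ee^{-C_0}k^{-\phi}.
\]

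\textbf{Step 4: upper bound.} Fix $p>\phi$. We need $\E N(t)^p\le C_p\ee^{p\alpha t}$. Given this, Markov's inequality yields
\[
\pr(N(T_1)\ge k)\le\int_0^\infty \ee^{-t}\min\big(1,C_p\ee^{p\alpha t}k^{-p}\big)\,\dd t\le C k^{-\phi},
\]
because $p\alpha>1$: split the integral at $t=\phi\log k$. The main obstacle is this $p$-th moment bound, since $\phi$ can be large when $\delta$ is large. I would obtain it by Burkholder--Davis--Gundy applied to $M$, which gives $\E\sup_s|M(s)|^p\lesssim \E\big(\int \ee^{-2\alpha s}\,\dd N\big)^{p/2}+\E\sup_s|\Delta M(s)|^p$. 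The jump term is bounded. For the bracket term, substitute $N(s)\le C(1+\sup_{u\le s}|M(u)|)\ee^{\alpha s}$ and induct on $p$ along powers of $2$, starting from the $L^2$ bound of Step 2. This closes because $\int\ee^{-2\alpha s}\,\dd N(s)$ is controlled by lower moments of $\sup M$, exactly as in standard CMJ moment arguments. Alternatively, if the moment bounds of Section~\ref{edgedesc} for the edge-based construction already give $\E N(t)^p\lesssim \ee^{p\alpha t}$, I would invoke them directly.
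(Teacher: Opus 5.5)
Your proof is correct, and it gets the key estimates by a genuinely different route from the paper's.

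\textbf{How the paper argues.} The paper never analyses the birth process directly. It uses Proposition~\ref{prop:edge-equiv-bp} to replace $N(t)+1$ by the size $|\BP_{\ve}(t)|$ of the edge branching process, which is a CMJ process with immigration from the root.
\begin{itemize}
\item The lower bound comes from Jagers--Nerman convergence $\ee^{-\lambda t}|\BP_{\ve}(t)|\to W_{\ve}$, a.s.\ and in $L^1$, with $\E W_{\ve}>0$. This gives $\pr(\ee^{-\lambda t}|\BP_{\ve}(t)|>\epsilon)\ge\eta$ for large $t$.
\item The upper bound comes from $\sup_t\E[(\ee^{-\lambda t}|\BP_{\ve}(t)|)^l]<\infty$ for all $l$ (Lemma~\ref{prop:zeta-bounds}). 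That rests on an external moment theorem for general branching processes plus a H\"older argument over the immigrants.
\end{itemize}
The final Markov-inequality integral, split at $t=\phi\log k$, is the same as yours, with $\lambda=\alpha=1/\phi$.

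\textbf{How you argue.} You observe that the hazard depends on the past only through $Z(t)=\sum_j(1-\ee^{-(t-\sigma_j)})$. Hence $(t,N,Z)$ is a piecewise-deterministic Markov process with linear drift, whose Perron root solves the same Malthusian equation $\alpha^2+\alpha=c_2$. The positive left eigenvector $(1,\alpha)$ gives an explicit martingale with $N(t)\ee^{-\alpha t}\le C(1+|M(t)|)$. From there:
\begin{itemize}
\item a second-moment bound plus Paley--Zygmund gives the lower bound;
\item the BDG bootstrap gives all moments for the upper bound, using integration by parts to get $[M]_t\le C(1+\sup_{s\le t}|M(s)|)$.
\end{itemize}

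\textbf{What each route buys.} Your route is self-contained. It needs neither the edge-process equivalence, nor the $x\log x$ theory, nor the cited moment theorem. It is also the same semimartingale mechanism the paper uses for Theorem~\ref{degree_growth} in Section~\ref{sec:max-deg-proof}: in the coordinates $(D,D-Z_2)$, the drift matrix $A_\delta$ there becomes exactly your matrix with $c_2=\gamma$. The paper's route, in exchange, identifies $1/\phi$ as the Malthusian parameter of an embedded branching process and gives the a.s.\ limit $W_{\ve}$ essentially for free.

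\textbf{Two points to make explicit when you write it up.}
\begin{itemize}
\item You need a priori finiteness of all moments of $N(t)$ for fixed $t$. Domination by a pure birth process with rates $c_1+c_2n$ (using $Z\le N$) suffices. This justifies the true-martingale property, the mean ODE, and the bootstrap.
\item Track the off-by-one between $N(T_1)\ge k-1$ and $N(T_1)\ge k$. It is harmless, since $(k-1)^{-\phi}\asymp k^{-\phi}$ for $k\ge 2$.
\end{itemize}
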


Our third theorem identifies the growth of the degrees. 

\begin{thm}[Degree growth]
\label{degree_growth} 
For every $i\geq 1$, there exist an $L^1$ random variables $\zeta_i $ with $\E(\zeta_i)>0$ such that $n^{-1/\phi(\delta)} d(v_i,n) \to  \zeta_i$ almost surely and in $L^1$.
\end{thm}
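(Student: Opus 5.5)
The plan is to reduce the two-dimensional, non-Markovian degree dynamics of a fixed vertex to an explicit linear recursion in the pair (degree, normalised weight), diagonalise it, and extract a positive martingale along the leading eigendirection. Fix $i\geq 1$ and write $d_n=d(v_i,n)$ and $\theta_n=\theta(v_i,n)$. Set $c=1+\tfrac12\delta$, so that the total weight is $cn(n+1)$. The pair $(d_n,\theta_n)$ is a time-inhomogeneous Markov chain with respect to $\FF_n$, even though $d_n$ alone is not. Its transitions are
\[
d_{n+1}=d_n+\xi_{n+1},\qquad \theta_{n+1}=\theta_n+d_{n+1}+\delta,
\]
where, conditionally on $\FF_n$, $\xi_{n+1}$ is Bernoulli with parameter $\theta_n/(cn(n+1))$. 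Put $u_n=\theta_n/n$. In logarithmic time $s=\log n$ the mean dynamics are approximately
\[
\partial_s d=u/c,\qquad \partial_s u=d+\delta-u,
\]
a linear system with matrix $\begin{pmatrix}0&1/c\\1&-1\end{pmatrix}$. Its eigenvalues solve $\lambda^2+\lambda-1/c=0$, giving
\[
\lambda_{+}=\tfrac12\bigl(-1+\sqrt{1+4/c}\bigr),\qquad \lambda_{-}=\tfrac12\bigl(-1-\sqrt{1+4/c}\bigr)<0 .
\]
A direct computation from \eqref{phidef} shows $\lambda_{+}=1/\phi(\delta)$, which is the source of the exponent.

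\textbf{Step 1 (leading martingale).} I take the left eigenvector $(1,\lambda_{+})$, adding a constant to absorb the $\delta$ term, and define
\[
Y_n=d_n+\lambda_{+}u_n+\kappa,\qquad \kappa=\delta\lambda_{+}/(\lambda_{+}+1).
\]
I then compute $\E[Y_{n+1}\mid\FF_n]$ exactly. The claim to verify is that $\E[Y_{n+1}\mid\FF_n]=(1+\lambda_{+}/n+\varepsilon_n)Y_n+r_n$. Here $\varepsilon_n=O(n^{-2})$ is deterministic, coming from the discrepancies between $n$ and $n+1$ in $u_n=\theta_n/n$ and in $cn(n+1)$. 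The remainder $r_n$ should satisfy $|r_n|\leq Cn^{-2}(d_n+u_n+1)$, and ideally vanish after a small adjustment of the coefficients of $Y_n$ at order $1/n$. With
\[
\gamma_n=\prod_{m=i}^{n-1}\bigl(1+\lambda_{+}/m+\varepsilon_m\bigr)\sim Cn^{\lambda_{+}},
\]
the process $M_n=Y_n/\gamma_n$ is then a martingale, or a martingale plus an absolutely summable drift. Positivity is immediate, since $d_n\geq 1$ and $\theta_n>0$ once $n\geq i$.

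\textbf{Step 2 ($L^2$ bound).} The only randomness in one step is $\xi_{n+1}$. Hence
\[
\mathrm{Var}(Y_{n+1}\mid\FF_n)\leq C\,\frac{\theta_n}{n^2}\leq C\,\frac{Y_n}{n}.
\]
Consequently
\[
\E[(M_{n+1}-M_n)^2]\leq C\,\frac{\E Y_n}{n\,\gamma_n^2}\leq C' n^{-1-\lambda_{+}},
\]
using $\E Y_n\asymp\gamma_n$, which holds by Step 1. This bound is summable, so $M_n\to\zeta'$ almost surely and in $L^2$, hence in $L^1$. Moreover $\E\zeta'=\lim_n\E M_n$, which is at least of order $\E M_i>0$ up to the summable drift.

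\textbf{Step 3 (separating $d_n$ from $u_n$).} This is the step I expect to be the main technical obstacle. I consider the other left eigenvector, $W_n=d_n+\lambda_{-}u_n+\kappa'$, which by the same computation satisfies
\[
\E[W_{n+1}\mid\FF_n]=(1+\lambda_{-}/n+O(n^{-2}))W_n+O\bigl(n^{-2}(d_n+u_n)\bigr).
\]
Unrolling this recursion, the deterministic part decays like $n^{\lambda_{-}}$. The martingale-noise part is a weighted sum $\sum_m (m/n)^{-\lambda_{-}}\Delta_m$ whose variance is at most $\sum_m (m/n)^{-2\lambda_{-}}\,m^{\lambda_{+}-1}=O(n^{\lambda_{+}})$. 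The error terms contribute $O(n^{\lambda_{+}-1})$. A Borel--Cantelli argument along a geometric subsequence, combined with the monotonicity of $d_n$ and $\theta_n$ between subsequence times, then gives $W_n=o(n^{\lambda_{+}})$ almost surely. The same variance bound, together with domination of $W_n$ by $Y_n$, gives this in $L^1$.

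Solving the two linear relations for $d_n$ gives
\[
d_n=\frac{\lambda_{-}Y_n-\lambda_{+}W_n}{\lambda_{-}-\lambda_{+}}+O(1).
\]
Therefore $n^{-1/\phi}d_n\to\zeta_i:=C\,\frac{\lambda_{-}}{\lambda_{-}-\lambda_{+}}\,\zeta'$ almost surely and in $L^1$, with $\E\zeta_i>0$, as claimed in Theorem~\ref{degree_growth}.

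The delicate point throughout is bookkeeping the exact discrete corrections. Among these, the $n$ versus $n+1$ mismatches must be checked carefully, and the constant $\delta$ shift must be handled so that they are genuinely $O(n^{-2})$ relative terms. If this fails, the normalisation would acquire logarithmic factors.
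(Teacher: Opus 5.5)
Your argument is correct in substance, after one small fix described below. It rests on the same mechanism as the paper's. The pair (degree, normalised integrated history) has a $2\times 2$ linear drift with eigenvalues solving $\lambda^2+\lambda=1/(1+\tfrac12\delta)$. The projection on the leading left eigenvector, divided by $n^{1/\phi}$, is an $L^2$-bounded (sub)martingale, and the other eigendirection is negligible.

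The implementation differs. The paper embeds the degree of $v_1$ in a marked rate-one Yule process and works with $(D,W/Y)$, which is affinely equivalent to your $(d_n,u_n)$; the paper's $X+\delta$ is a constant multiple of your $Y_n$ with $\kappa=\delta$. It then replaces $Y+1$ by $Y$ in the marking probability and uses the generator, Doob--Meyer and Gr\"onwall. It proves positivity by a separate discrete-time lemma and returns to discrete time via $\tau_n-\log n\to W$.

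Working directly with the Markov pair $(d_n,\theta_n)$ gives you exact one-step identities. It also avoids the embedding, the $Y+1\to Y$ approximation, and the random time change, after which $L^1$ convergence would need an extra uniform-integrability argument. Your Step~3 supplies what the paper only asserts. The paper deduces $\ee^{-t/\phi}U(t)\to 0$ in $L^1$ from boundedness of $\E[U(t)]$, which by itself does not control $\E|U(t)|$, and it gives no almost-sure argument. Your $O(n^{\lambda_+})$ variance bound for the noise in the $\lambda_-$ direction is exactly what is needed. Finally, the constant shift lets you treat all $\delta>-1$ at once. The paper's positivity lemma uses $\vv^\top S=\lambda_+\delta\ge 0$, which holds only for $\delta\ge 0$.

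\textbf{The fix.} The shift must be $\kappa=\delta$, not $\delta\lambda_+/(\lambda_++1)$, since the fixed point of your linear system is $(d,u)=(-\delta,0)$. With $\kappa=\delta$ one gets exactly
\[
\E[Y_{n+1}\mid\FF_n]=\Big(1+\frac{\lambda_+}{n+1}\Big)Y_n+\frac{\lambda_+u_n}{c(n+1)^2},
\]
with remainder in $[0,Y_n/(c(n+1)^2)]$. Hence $Y_n/\prod_{m=i}^{n-1}(1+\lambda_+/(m+1))$ is a nonnegative submartingale with bounded mean, and $\E\zeta'\ge \E M_i>0$ is immediate.

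With your $\kappa$ there is an extra term $\lambda_+(\delta-\kappa)/(n+1)=\lambda_+\delta/((\lambda_++1)(n+1))$. It is still summable after normalisation, so convergence survives. But for $\delta<0$ it is negative, and your claim that $\E\zeta'$ is ``at least of order $\E M_i$'' is then unjustified. Also, the $O(n^{-2})$ correction is random rather than deterministic, but it is bounded by a deterministic summable sequence, which is all you use.

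Step~3 can be shortened. Since $\theta_n-\theta_{n-1}=d_n+\delta=Y_n-\lambda_+u_n$, one has $(n+\lambda_+)u_n=(n-1)u_{n-1}+Y_n$. So almost-sure convergence of $n^{-\lambda_+}Y_n$ to $L$ gives, by a deterministic Toeplitz argument, $n^{-\lambda_+}u_n\to L/(1+2\lambda_+)$ and $n^{-\lambda_+}d_n\to L(1+\lambda_+)/(1+2\lambda_+)$. This is your constant, since $\lambda_-=-1-\lambda_+$. $L^1$ convergence then follows from $0<d_n+\delta\le Y_n$ and the $L^1$ convergence of $n^{-\lambda_+}Y_n$.
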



\subsection{Intuition for the limit object}
\label{intuition} 

Before formally exhibiting the local weak limit, we give an intuitive picture of why the above continuous-time branching process is expected to show up in the local asymptotics. To simplify the notation, assume that $\delta =0$ throughout. Let us start by considering a ``continuous time'' analog $\Cod$ of the network process (at this stage, all that we are doing is changing the time scale at which events happen, moving from discrete to continuous time). Start with one vertex $\Cod(0) = \set{\rho}$ at time $t=0$. Suppose that, at any given time $t$, each vertex $v\in \Cod(t)$ begets a new vertex at rate $1$. This new vertex is not necessarily a child of $v$. Rather, it will connect to a vertex in $\Cod(t)$ by using the original discrete-time dynamics. 

The dynamics of $\Cod$ imply that the size of the process $(|\Cod(t)|)_{t\geq 0}$ has the same distribution as a rate-$1$ Yule process. Standard results about the Yule process imply that $\eee^{-t}|\Cod(t)| \stackrel{a.s.,~L^2}{\to} W$ with $W\sim \exp(1)$. Let $V_t$ denote a randomly selected vertex in $\Cod(t)$, born at time $B(V_t)$. Then its \emph{age} by time $t$, $\age(V_t) = t- B(V_t)$, satisfies 
\begin{equation}
\label{eqn:958}
\pr(\age(V_t)> a\mid \Cod(t)) = \frac{|\Cod(t-a)|}{|\Cod(t)|}\to \eee^{-a},
\end{equation}
so that the age has approximate distribution
\begin{equation}
\age(V_t) \approx \exp(1). 
\end{equation}
This explains the origin of the random variable $T_1$ in Definition~\ref{def:limit-bp-macro}, describing the limit of the fringe distribution as $\BP_{\Sr}$ run for an exponential rate-$1$ amount of time. 

Next, fix a large time $t$ and suppose that a vertex $v$ was born at time $t$. We explain heuristically how we can get a handle on the evolution of its degree, starting from the transition from degree $1$ (leaf) to degree $2$ via a new vertex attaching to it. Fix $x> 0$, and consider the hazard rate of a new vertex connecting to vertex $v$ at time $t+x$, namely, conditional on no connections to $v$ until this time, consider the probability of a new connection forming in the time interval $[t+x, t+x+\Delta x]$. The rate at which a new individual is born in this time interval is $|\BP_{\Sr}(t+x)|\, \Delta x \approx W\exp(t+x)$. Using the (discrete-time) probabilistic mechanism of edge formation, we see that this infinitesimal probability is approximately given by 
\begin{align*}
|\BP_{\Sr}(t+x)|\,\Delta x\, \frac{(|\BP_{\Sr}(t+x)| - |\BP_{\Sr}(t)|)}{\BP(t+x) (|\BP_{\Sr}(t+x)|+1)} 
\approx \frac{W\eee^{t+x} - W\eee^t}{W\eee^{t+x}} \Delta xv\approx (1-\eee^{-x})\,\Delta x,
\end{align*}
which explains the hazard rate in \eqref{eqn:h01-hazard}. This argument can then be extended in a straightforward manner to understand the hazard rate for the time to form additional edges. 


\subsection{Discussion}
\label{sec:discussion}

In the standard \emph{preferential attachment model}, the weight is given by the last term of the sum in \eqref{eq:weightchoice}, i.e., $\theta(v_i,n) = d(v_i,n) + \delta$. This model has been investigated in detail and exhibits a scaling behavior that is similar to that in Theorems~\ref{thm:main-local}--\ref{degree_growth}, but simpler (see \citesno{van2017random,van2024random} and references therein). When the weight includes the sum over all priori times, as in \eqref{eq:weightchoice}, we refer to the process as the \emph{self-reinforced preferential attachment model}. The reason for this name is that the latter model has a flavor similar to that of edge-self-reinforced random walk, where edge-crossing probabilities are proportional to the total number of edge-crossings at prior times. A closely related source of ``complete-history'' memory in probability and statistical physics is the \emph{elephant random walk}, where the next increment is chosen by sampling uniformly from the entire past and copying (or flipping) a previously taken step \citesno{schutz2004elephant,dasilva2013nongaussian,baur2016elephant,bercu2018martingale,laulin2022elephant,bertenghi2024thesis}. While the state spaces and observables differ (positions versus degrees), both models exhibit non-Markovian dynamics driven by a reinforcement mechanism that aggregates information over all earlier times. The special case where $\delta=0$ was introduced and analysed in \cite{YDFdHpr}, where the evolution of the degree of a fixed vertex was analyzed. 

What makes the self-reinforcement challenging is that it introduces \emph{memory} into the evolution, which is exemplified by the fact that the degree of a given vertex no longer is a time-inhomogeneous Markov chain. In turn, this implies that the growing random tree can no longer be viewed as a continuous-time branching process stopped at the moment when it reaches a given size. The presence of memory also comes with a \emph{loss of exchangeability}, so that, for instance, the finite graph P\'olya urn representation \citesno{BBCS05,berger2014asymptotic} is no longer valid. All these hurdles have to be overcome in the proof of Theorems~\ref{thm:main-local}--\ref{degree_growth}.     

It is easily checked that $\phi(\delta) < 2+\delta$ for all $\delta > -1$, where the upper bound is the counterpart of $\phi(\delta)$ in the model without self-reinforcement. Thus, as expected, the degrees grow faster and the tail of the degree distribution is thicker with self-reinforcement. Moreover, $\delta \mapsto \phi(\delta)$ is strictly increasing and strictly concave, with $\lim_{\delta \downarrow -1} \phi(\delta) =1$ and $\lim_{\delta \to \infty} \phi(\delta)/\delta = \tfrac12$. Thus, as expected, increasing $\delta$ hampers the formation of large degrees because only few of the vertices attach to the rare hubs. See Figure~\ref{fig:map} for a plot.

\begin{figure}[htbp]
\centering
\begin{tikzpicture}
\begin{axis}[
    width=11.5cm,
    height=7.2cm,
    domain=-0.95:5,
    samples=300,
    axis lines=middle,
    xlabel={$\delta$},
    ylabel={value},
    xmin=-1, xmax=5,
    ymin=0, ymax=7.2,
    xtick={-1,0,1,2,3,4,5},
    ytick={1,2,3,4,5,6,7},
    grid=major,
    grid style={gray!20},
    tick label style={font=\small},
    label style={font=\small},
    every axis plot/.append style={line cap=round}
]

\addplot[
    very thick,
    blue!70!black
]
{0.5*(1+0.5*x)*(1 + sqrt(1 + 4/(1+0.5*x)))};

\addplot[
    very thick,
    red!75!black,
    dotted
]
{x+2};

\node[blue!70!black, font=\small, anchor=south west]
    at (axis cs:2.2,2.95) {$\phi(\delta)$};

\node[red!75!black, font=\small, anchor=west]
    at (axis cs:2.3,4.15) {$2+\delta$};

\end{axis}
\end{tikzpicture}
\caption{\small The blue drawn curve is a plot of the function $\delta \mapsto \phi(\delta)$. The red dotted curve is its counterpart $\delta \mapsto 2+\delta$ for standard affine preferential attachment (in the absence of self-reinforcement). Note that the asymptotic slopes of the two curves are different, and that the lower curve is close to linear but is in fact strictly concave. In particular, $\phi(\delta) = \tfrac12[\delta + 4 + O(1/\delta)]$ as $\delta\to\infty$.}
\label{fig:map}
\end{figure}


\section{Local convergence and {\tt sin}-trees}
\label{sec:local-wll}

The main goal of the present paper is to prove that the sequence of random networks $\{\cT_n\colon\,n \geq 1\}$ converges in an appropriate sense to a limiting infinite tree. The goal of this section is to describe the by now standard notion of \emph{local convergence}~\citesno{aldous-steele-obj,benjamini-schramm,van2024random}. We follow Aldous in~\cite{aldous-fringe}, who developed the foundations of local limits for probabilistic models of trees, where local convergence has an equivalent formulation in terms of convergence of the fringe distribution for the sequence of random trees corresponding to infinite trees with a single infinite path to infinity (sometimes referred to as {\tt sin}-trees). The work in~\cite{aldous-fringe} in the more involved setting of trees with vertex attributes was explained in~\cite{antunes2023attribute}, which we paraphrase in our setting. 


\subsection{Mathematical notation}

Write $\stod$ for stochastic domination between two real-valued probability measures. For $J\geq 1$, let $[J] = \set{1,\ldots, J}$. If $Y$ has an exponential distribution with rate $\gl$, then write $Y \sim \exp(\gl)$. Write $\convas,\convp,\convd$ for convergence almost surely, in probability and in distribution, respectively. 

A sequence of events $(A_n)_{n\geq 1}$ is said to occur \emph{with high probability} (whp) when $\pr(A_n) \to 1$ as $n \to \infty$. One of the core objects in the present paper is the sequence of growing random trees $(\cT_n)_{n\geq 1}$. Throughout, $d(v,\cT_n)$ denotes the degree of the vertex $v$ in the tree $\cT_n$, and
\begin{equation}
    \label{eqn:deg-count}
    N_k(n) = \sum_{v\in \cT_n} \ind\set{d(v,\cT_n) = k}, \qquad k\geq 1,
\end{equation}
denotes the empirical degree counts at time $n$.

  
\subsection{Fringe decomposition for trees}

For $n\geq 1$, let $\bT_{n}$ be the space of all rooted trees on $n$ vertices. Let $ \bbT = \cup_{n=0}^\infty \bT_{n} $ be the space of all finite rooted trees. Here, $\bT_{0} = \emptyset$ will be used to represent the empty tree (no vertices). Let $\rho_{\bt}$ denote the root of $\bt$. For $r\geq 0$ and $\bt\in \bbT$, let $B(\bt, r) \in \bbT$ denote the subgraph of $\bt$ consisting of those vertices within graph distance $r$ from $\rho_{\bt}$, viewed as an element of $\bbT$ and rooted again at $\rho_{\bt}$. 

Given two rooted finite trees $\bs, \bt \in \bbT$, we write $\bs \simeq \bt$ when there exists a \emph{root preserving} isomorphism between the two trees viewed as unlabelled graphs. Given two rooted trees $\bt,\bs \in \bbT$ (\cite{benjamini-schramm},~\cite{van2024random}*{(2.3.15)}), define the distance 
\begin{align}
\label{eqn:distance-trees}
	d_{\bbT}(\bt,\bs) = \frac{1}{1+R^*}, \qquad R^* =\sup\{r\colon\, B(\bt, r) \simeq B(\bs, r)\}.
\end{align}

Next, fix a tree $\bt\in \bbT$ with root $\rho = \rho_\bt$ and a vertex $v\in \bt$ at graph distance $h$ from the root. Let $(v_0 = v, v_1, \ldots, v_h = \rho)$ be the unique path from $v$ to $\rho$. The tree $\bt$ can be decomposed into $h+1$ rooted trees $f_0(v,\bt), \ldots, f_h(v,\bt)$, where $f_0(v,\bt)$ is the tree rooted at $v$ consisting of all the vertices for which there exists a path from the root passing through $v$. For $i \ge 1$, $f_i(v,\bt)$ is the subtree rooted at $v_i$, consisting of all the vertices for which the path from the root passes through $v_i$ but not through $v_{i-1}$ (see Figure~\ref{fig:fringe}). 

\begin{figure}[htbp]
\centering
\includegraphics[scale=.2]{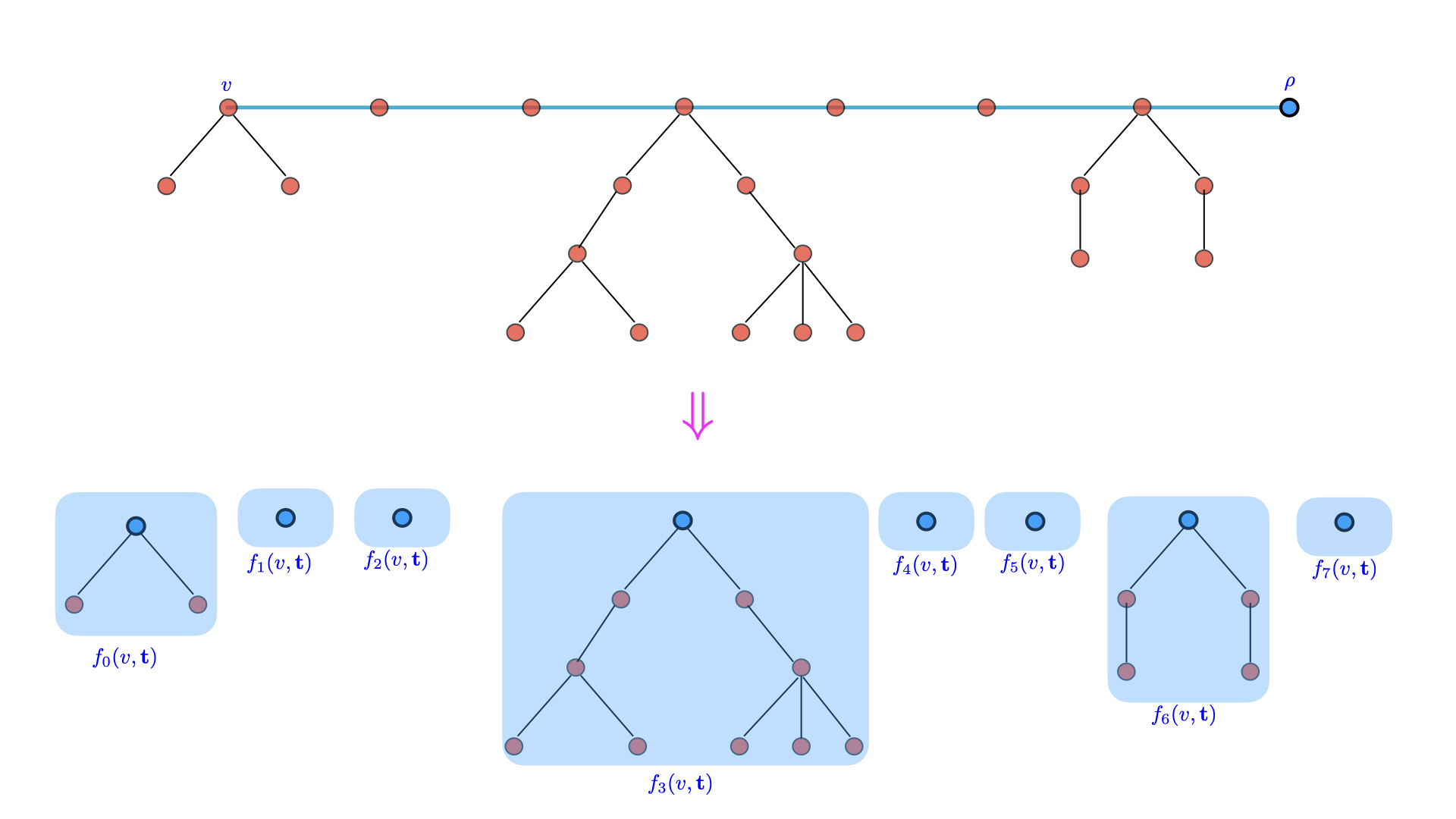}
\caption{Fringe decomposition around vertex $v$ of a finite tree rooted at $\rho$. The blue colored vertices represent the roots of the respective trees.}
\label{fig:fringe}
\end{figure}
 
Call the map $(v,\bt) \leadsto \bbT^\infty$ (i.e., infinite sequences of finite trees) with $v\in \bt$, defined by 
\[
F(v, \bt) = \left(f_0(v,\bt), f_1(v,\bt) , \ldots, f_h(v,\bt), \emptyset, \emptyset, \ldots \right),
\]
the \emph{fringe decomposition} of $\bt$ about the vertex $v$. Call $f_0(v,\bt)$ the \emph{fringe} of the tree $\bt$ at $v$. For $k\geq 0$, call $F_k(v,\bt) = (f_0(v,\bt), \ldots, f_k(v,\bt))$ the \emph{extended fringe} of the tree $\bt$ at $v$ truncated at distance $k$ from $v$ on the path to the root.

Next, consider the space $\bbT^\infty$. The metric in~\eqref{eqn:distance-trees} extends to $\bbT^\infty$, e.g.\ via the distance
\begin{align}
\label{eqn:dist-inf}
	d_{\bbT^\infty}((\bt_0, \bt_1, \ldots),(\bs_0, \bs_1, \ldots)) = \sum_{i=0}^\infty \frac{1}{2^i} d_{\bbT}(\bt_i, \bs_i). 
\end{align}
We can define analogous extensions to $\bT^k$ for finite $k$.  

An element $\bfomega = (\bt_0, \bt_1, \ldots) \in \bbT^\infty$, with $|\bt_i|\geq 1$ for all $ i\geq 0$, can be thought of as a locally-finite infinite rooted tree with a \emph{single} path to \emph{infinity} (called a {\tt sin}-tree~\cite{aldous-fringe}), as follows. Identify the sequence of roots of $(\bt_i)_{i\geq 0}$ with the integer lattice $\Zbold_+ = \set{0,1,2,\ldots}$, equipped with the natural nearest-neighbor edge set, rooted at $\rho=0$. Analogous to the definition of extended fringes for finite trees, for any $k\geq 0$, write $F_k(0,\bfomega)= (\bt_0, \bt_1, \ldots, \bt_k) \in \bbT^k$. See Figure~\ref{fig:sin}. 

\begin{figure}[htbp]
\centering
\includegraphics[scale=.2]{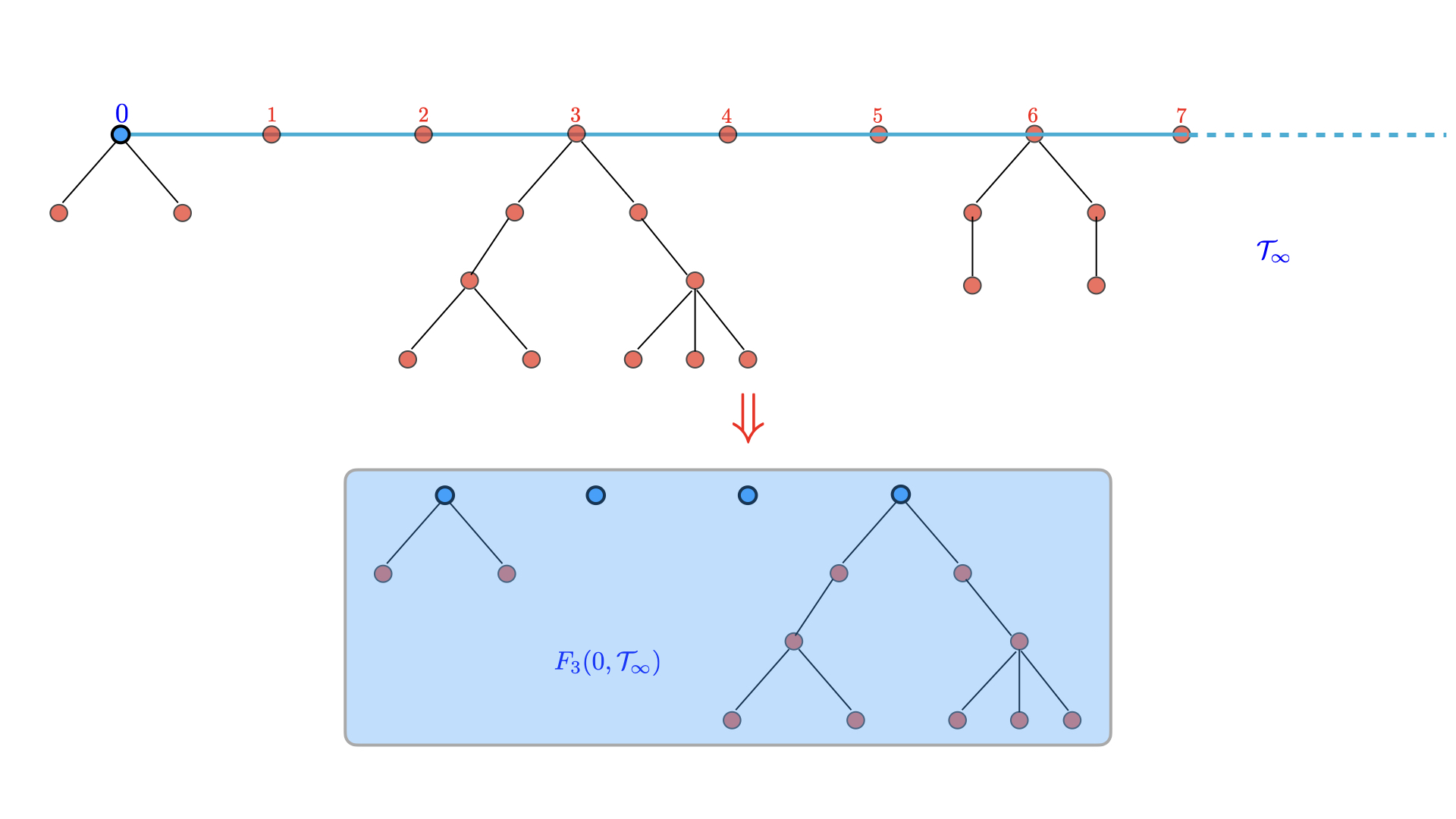}
\caption{A {\tt sin}-tree $\cT_\infty$, namely, a tree rooted at $0$ with a single infinite path to infinity, and the corresponding extended fringe $F_3(0,\cT_\infty)$ up to level 3 about $0$.}
\label{fig:sin}
\end{figure}

Call this the extended fringe of the tree $\bfomega$ at vertex $0$, until distance $k$, on the infinite path from $0$. Call $\bt_0 = F_0(0,\bfomega)$ the \emph{fringe} of the {\tt sin}-tree $\bfomega$. Suppose $\prob$ is a probability measure on $\bbT^\infty$ such that, for $\TT_\infty = (\bt_0(\TT_\infty), \bt_1(\TT_\infty),\ldots) \sim \prob$, $|\bt_i(\TT_\infty)|\geq 1$ a.s.\ for all $i\geq 0$. Then $\TT_\infty$ can be thought of as an infinite \emph{random} {\tt sin}-tree. 

Define a matrix $\vQ = (\vQ(\vs,\vt)\colon\, \vs, \vt \in \bbT)$ as follows. Suppose that the root $\rho_{\vs}$ in $\vs$ has degree $d(\rho_{\vs}, \vs) \ge 1$, and let $(v_1,\ldots, v_{d(\rho_{\vs}, \vs)})$ denote its children. For $1\leq i\leq v_{d(\rho_{\vs}, \vs)}$, let $f(\vs, v_i)$ be the subtree below $v_i$ and rooted at $v_i$, viewed as an element of $\bbT$. Define
\begin{align}
\label{eqn:Q-def}
	\vQ(\vs,\vt) = \sum_{i=1}^{d(\rho_{\vs},\vs)} \ind\set{d_{\bbT}(f(\vs, v_i), \vt) = 0}. 
\end{align} 
Thus, $\vQ(\vs, \vt)$ counts the number of descendant subtrees of the root of $\vs$ that are isomorphic (in the sense of topology) to $\vt$. For $d(\rho_{\vs}, \vs)=0$, define $\vQ(\vs, \vt)=0$. Consider a sequence $(\bar \vt_0, \bar \vt_1, \dots)$ of trees in $\bbT$ such that $\vQ(\bar \vt_i, \bar \vt_{i-1}) \ge 1$ for all $i \ge 1$. Then there exists a unique infinite {\tt sin}-tree $\TT_\infty$ with infinite path indexed by $\Zbold_+$ such that $\bar \vt_i$ is the subtree rooted at $i$ for all $i \in \Zbold_+$. Conversely, it is easy to see that, by taking $\bar \vt_i$ to be the union of (vertices and induced edges) of $\vt_0,\dots, \vt_i$ for each $i \in \Zbold_+$, every infinite {\tt sin}-tree has such a representation. Following~\cite{aldous-fringe}, we call this the \emph{monotone representation} of the {\tt sin}-tree $\TT_\infty$.


\subsubsection{Convergence on the space of trees}
\label{sec:fringe-convg-def}

For $1\leq k\leq \infty$, let $\cM_{\pr}(\bbT^k)$ denote the space of probability measures on the associated space, metrized by using the topology of weak convergence inherited from the corresponding metric on the space $\bbT^k$, resulting in it being a Polish space (see, e.g.,~\cite{billingsley2013convergence}). Suppose that $(\cT_n)_{n\geq 1} \subseteq \bbT$ is a sequence of \emph{finite} rooted random trees on some common probability space (for notational convenience, assume that $|\cT_n| = n+1,$ or, more generally, $|\cT_n|\convas \infty$). For $n\geq 1$ and $k\geq 0$, define the empirical distribution of fringes up to distance $k$ as 
\begin{align}
\label{eqn:empirical-fringe-def}
	\fP_{n}^k[{\bf s}] = \frac{1}{n} \sum_{v\in \cT_n} \delta_{\set{F_k(v,\cT_n) \simeq {\bf s}}}. 
\end{align}
Thus, $(\fP_{n}^k)_{n\geq 1}$ can be viewed as a random sequence in $\cM_{\pr}(\bbT^k)$ and we can talk about weak convergence of this sequence, which is the content of (b) and (c) of the definition below. Furthermore, with $k=0$, define the probability measure $\E(\fP_n^0)$ on $\bbT$ via the operation $\E(\fP_n^0)[\vt] = \E(\fP_n^0[\vt])~\forall~\vt \in \bbT$.  

\begin{defn}[Local convergence]
	\label{def:local-weak}
Fix a probability measure $\varpi$ on $\bT$.
	\begin{enumeratea}
\item[(a)] \label{it:fringe-exp} 
We say that a sequence of trees $(\cT_n)_{n\geq 1}$ converges in \emph{expectation}, in the fringe sense, to $\varpi$, if 
 \[\E(\fP_n^{0}) \to \varpi, \quad \text{ on } \cM_{\pr}(\bbT) \quad  \text{ as } n\to\infty. \]
 Denote this convergence by $\TT_n\Efr \varpi$ as $n\to\infty$.
	    \item[(b)] \label{it:fringe-a}  We say that a sequence of trees $(\cT_n)_{n\geq 1}$ converges in the probability sense, in the fringe sense, to $\varpi$, if \[\fP_n^{0} \probc \varpi, \quad \text{ as } n\to\infty. \]
	Denote this convergence by $\TT_n\probfr \varpi$ as $n\to\infty$.
	    \item[(c)] \label{it:fringe-b} We say that a sequence of trees $(\cT_n)_{n\geq 1}$ converges in probability, in the \emph{extended fringe sense}, to a limiting infinite random {\tt sin}-tree $\TT_{\infty}$ if, for all $k\geq 0$,
	  \[\fP_n^k \probc \prob\left(F_k(0,\TT_{\infty}) \in \cdot \right), \qquad \text{ as } n\to\infty. \]
	Denote this convergence by $\TT_n\probcrf \TT_{\infty}$ as $n\to\infty$.
	\end{enumeratea}
\end{defn} 
In an identical fashion, one can define notions of convergence in distribution or almost surely in the fringe, respectively, extended fringe sense. Letting $\varpi_{\infty}(\cdot) = \pr(F_0(0, \cT_{\infty}) = \cdot)$ denote the distribution of the fringe of $\cT_\infty$ on $\bT$, the convergence in (c) above clearly implies convergence in notion (b) with $\varpi =\varpi_{\infty}(\cdot)$. More surprisingly, if the limiting distribution $\varpi$ in (b) has a certain `stationarity' property (defined below), \emph{convergence in the fringe sense implies convergence in the extended fringe sense} as we now describe. Further, under an ``extremality'' condition of the limit objection in (a), convergence in expectation implies convergence in probability as in (b). We need the following definitions:

\begin{defn}[Fringe distribution~\cite{aldous-fringe}] 
\label{fringedef}
We say that a probability measure $\varpi$ on $\bbT$ is a fringe distribution if 
\[
\sum_{\vs} \varpi(\vs) \vQ(\vs, \vt) = \varpi(\vt), \qquad \forall~\vt \in \bbT. 
\]
\end{defn}

It is easy to check that the space of fringe distributions $\cM_{\pr, \fringe}(\bbT) \subseteq \cM_{\pr}(\bT)$ is a convex subspace of the space of probability measure on $\bT$, and so we can talk about extreme points of this convex subspace. The following fundamental theorem is one of the highlights of~\cite{aldous-fringe}:

\begin{thm}[Consequence of fringe convergence \cite{aldous-fringe}]
\label{thm:aldous-efr-pfr}
Fix a fringe distribution $\varpi \in \cM_{\pr, \fringe}(\bbT)$. Suppose that a sequence of trees $(\cT_n)_{n\geq 1}$ converges in the expected fringe sense to $\cT_n \Efr \varpi$ as $n\to\infty$ (recall Definition \ref{def:local-weak}(a)). If $\varpi$ is extremal in the space of fringe measures, then the above convergence in expectation automatically implies convergence in probability in the fringe sense, i.e., $\cT_n \probfr \varpi$ (recall Definition \ref{def:local-weak}(c)).
\end{thm}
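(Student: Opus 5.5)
The plan is a compactness argument: show that every subsequential limit of the random empirical fringe measures $\fP_n^0$ is almost surely a fringe distribution whose mean is $\varpi$, and then use extremality to force the limit to be the constant $\varpi$.

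\emph{Tightness and subsequential limits.} Since $\bbT$ is countable and discrete, $\cM_{\pr}(\bbT)$ is a Polish space. Because $\E(\fP_n^0)\to\varpi$, the expected measures form a tight family. By Markov's inequality this gives tightness of the laws of the random measures $\fP_n^0$. Indeed, for finite $K\subset\bbT$ with $\varpi(K^c)<\varepsilon$ we have $\pr(\fP_n^0(K^c)>\sqrt{\varepsilon})\le \E\fP_n^0(K^c)/\sqrt{\varepsilon}$, which is small for large $n$, and we diagonalize over $\varepsilon\downarrow 0$. Take a subsequence along which $\fP_n^0\convd \mu$, with $\mu$ a random element of $\cM_{\pr}(\bbT)$ and $\Pi$ its law. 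Evaluating at each fixed $\vt$ and using bounded convergence gives $\E\mu(\vt)=\varpi(\vt)$ for all $\vt$, and $\mu$ is a.s.\ a probability measure.

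\emph{Limits are fringe distributions.} For a finite rooted tree, every non-root vertex $v$ is a child of exactly one vertex $u$, and the fringe of $v$ is one of the subtrees $f(f_0(u,\cT_n),\cdot)$. Hence, for each fixed $\vt$,
\[
\sum_{\vs}\fP_n^0(\vs)\vQ(\vs,\vt)=\fP_n^0(\vt)-\tfrac1n\ind\set{f_0(\rho,\cT_n)\simeq\vt},
\]
and the error term vanishes once $|\cT_n|>|\vt|$. The function $\vs\mapsto\vQ(\vs,\vt)$ is unbounded, so we cannot pass to the limit directly. We use Fatou (via Skorokhod representation along the subsequence) to get $\sum_{\vs}\mu(\vs)\vQ(\vs,\vt)\le\mu(\vt)$ a.s.\ for every $\vt$.

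For the reverse inequality we compare total masses. Summing over $\vt$, the left side equals $\sum_{\vs}\mu(\vs)\,d(\rho_\vs,\vs)$, where $d(\rho_\vs,\vs)$ is the number of children of the root. By Fubini and $\E\mu=\varpi$, its expectation is $\sum_\vs\varpi(\vs)d(\rho_\vs,\vs)$. Since $\varpi$ is a fringe distribution, this equals $\sum_\vt\varpi(\vt)=1=\E\sum_\vt\mu(\vt)$. A nonnegative difference with zero mean vanishes a.s., so $\mu\vQ=\mu$ a.s. Thus $\mu\in\cM_{\pr,\fringe}(\bbT)$ a.s. I expect this step to be the main obstacle, because of the unbounded test function $\vQ(\cdot,\vt)$. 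The Fatou-plus-mass-conservation trick is what I would try first.

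\emph{Extremality forces a deterministic limit.} Let $A$ be a measurable set of fringe distributions with $p=\Pi(A)\in(0,1)$. Write $\varpi_A=\E[\mu\mid\mu\in A]$ and $\varpi_{A^c}=\E[\mu\mid\mu\in A^c]$. Both are probability measures, and both are fringe distributions: the fringe equations are linear with nonnegative coefficients, so they pass through the conditional averages by Fubini. Then $\varpi=p\varpi_A+(1-p)\varpi_{A^c}$, and extremality gives $\varpi_A=\varpi$.

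Now apply this with $A=\set{\mu(\vt)>\varpi(\vt)}$ for a fixed $\vt$. If $\Pi(A)>0$, then $\varpi_A(\vt)>\varpi(\vt)$. This is impossible when $\Pi(A)<1$, and when $\Pi(A)=1$ it contradicts $\E\mu(\vt)=\varpi(\vt)$. Arguing symmetrically for $\set{\mu(\vt)<\varpi(\vt)}$ yields $\mu(\vt)=\varpi(\vt)$ a.s. Since $\bbT$ is countable, $\mu=\varpi$ a.s.

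Every subsequential distributional limit of $\fP_n^0$ is therefore the deterministic measure $\varpi$. Hence $\fP_n^0\probc\varpi$, i.e.\ $\cT_n\probfr\varpi$.
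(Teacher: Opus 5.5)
Your argument is correct, and it is essentially the standard proof from Aldous's paper, which this paper cites rather than reproduces. The steps match: tightness of $\fP_n^0$ from convergence of the mean measures; Fatou plus the mass identity $\sum_{\vs}\varpi(\vs)\,d(\rho_{\vs},\vs)=1$ to show that every subsequential limit is a.s.\ a fringe distribution with mean $\varpi$; and extremality to force that limit to be the constant $\varpi$. (One cosmetic point: with the paper's normalisation $\fP_n^0$ has total mass $(n+1)/n$. This is harmless, since any subsequential limit is still a probability measure.)
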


The advantage of Theorem~\ref{thm:aldous-efr-pfr} is that, to prove convergence in the probability fringe sense, it is enough to deal with expectations, at least under the extremality of the limit object.  The next result shows that convergence in the probability fringe sense often automatically implies convergence to a limit infinite {\tt sin}-tree. To formulate the statement, we need an additional definition. For any fringe distribution $\varpi$ on $\bbT$, we can uniquely obtain the law $\varpi^{{\rm \sss EF}}$ of a random {\tt sin}-tree $\TT$ with monotone decomposition $(\bar\bt_0(\TT), \bar\bt_1(\TT),\ldots)$ such that, for any $i \in \Zbold_+$ and any $\bar \vt_0, \bar \vt_1, \dots$ in $\bbT$,
\begin{align}
\label{ftoef}
\varpi^{{\rm \sss EF}}((\bar\bt_0(\TT), \bar\bt_1(\TT),\ldots, \bar\bt_i(\TT)) = (\bar \vt_0,\vt_1,\dots,\vt_i)) 
= \varpi(\vt_i) \prod_{j=1}^{i} \vQ(\vt_i,\vt_{i-1}),
\end{align}
where the product is taken to be 1 when $i=0$. The following lemma follows by adapting the proof of~\cite{aldous-fringe}*{Propositions 10 and 11}.

\begin{lemma}[Convergence to sin-tree]
\label{ftoeflemma}
{\it Suppose that a sequence of trees $(\cT_n)_{n\geq 1}$ converges in probability, in the fringe sense, to $\varpi$. Moreover, suppose that $\varpi$ is a fringe distribution in the sense of Definition~\ref{fringedef}. Then $(\cT_n)_{n\geq 1}$ converges in probability, in the {\em extended} fringe sense, to a limiting infinite random sin-tree $\TT_{\infty}$ whose law $\varpi^{{\rm \sss EF}}$ is uniquely obtained from $\varpi$ via~\eqref{ftoef}.}
\end{lemma}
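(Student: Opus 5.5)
The plan is to follow Aldous's argument for Propositions 10 and 11 in \cite{aldous-fringe}, working at the level of the monotone representation. Fix $k\geq 0$ and a finite sequence $(\bar\vt_0,\ldots,\bar\vt_k)$ with $\vQ(\bar\vt_j,\bar\vt_{j-1})\geq 1$. The extended fringe $F_k(v,\cT_n)$ equals this sequence (up to root-preserving isomorphism) exactly when the following two conditions hold. First, $v$ has at least $k$ ancestors. Second, for every $j\leq k$, the fringe subtree of $\cT_n$ at the $j$-th ancestor $v_j$ of $v$ is isomorphic to $\bar\vt_j$, with $v_{j-1}$ playing the role of the distinguished child.

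The key step is a counting identity. I will count pairs $(v,w)$ in which $w$ is the $k$-th ancestor of $v$ and $f_0(w,\cT_n)\simeq\bar\vt_k$. For a fixed such $w$, the number of descendants $v$ at depth $k$ whose chain of nested subtrees is isomorphic to $(\bar\vt_0,\ldots,\bar\vt_k)$ is a deterministic function of $\bar\vt_k$. Summing over automorphisms, it equals $\prod_{j=1}^k\vQ(\bar\vt_j,\bar\vt_{j-1})$ (with the product taken along the chain). Consequently,
\[
\fP_n^k[(\bar\vt_0,\ldots,\bar\vt_k)] \;=\; \fP_n^0[\bar\vt_k]\prod_{j=1}^k \vQ(\bar\vt_j,\bar\vt_{j-1}) \;+\; \mathrm{Err}_n,
\]
where $\mathrm{Err}_n$ collects the vertices $v$ with fewer than $k$ ancestors. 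Such vertices lie in a neighbourhood of the root, namely in the union of those fringes $f_0(w,\cT_n)$ with $w$ at depth less than $k$. The hypothesis $\fP_n^0\probc\varpi$ and the fringe identity then give convergence in probability of the main term to $\varpi(\bar\vt_k)\prod_j\vQ(\bar\vt_j,\bar\vt_{j-1})$, which by \eqref{ftoef} is exactly $\varpi^{\rm EF}$ of the corresponding cylinder event.

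The main obstacle is controlling $\mathrm{Err}_n$ and passing from cylinder probabilities to weak convergence on $\bbT^k$, where tightness is needed. Both are handled with the fringe property. First, Definition~\ref{fringedef} iterated $k$ times shows that the limit $\varpi^{\rm EF}$ is a probability measure: summing \eqref{ftoef} over $\bar\vt_0,\ldots,\bar\vt_{k-1}$ returns $\varpi(\bar\vt_k)$, and summing over $\bar\vt_k$ returns $1$. Second, the main-term limits summed over all finite cylinder sets therefore total $1$ in probability. The empirical measures $\fP_n^k$ have total mass at most $1$, so Fatou/Scheff\'e-type reasoning on the countable discrete space $\bbT^{k+1}$ shows that no mass escapes and that $\mathrm{Err}_n\to 0$ in probability. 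Pointwise convergence in probability on a countable space whose limit has total mass $1$ then yields $\fP_n^k\probc\varpi^{\rm EF}\circ F_k^{-1}$ in the weak topology.

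Finally, uniqueness of $\varpi^{\rm EF}$ follows because the finite-dimensional laws \eqref{ftoef} are consistent, again by the fringe identity. Kolmogorov extension therefore determines a unique law on monotone sequences, and hence a unique random {\tt sin}-tree.
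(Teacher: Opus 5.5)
Your strategy is the right one, and it is the argument of Aldous that the paper simply cites. It has three ingredients: the exact count $\#\{v\colon \text{chain of fringes at } v=(\bar\vt_0,\ldots,\bar\vt_k)\}=\#\{w\colon f_0(w,\cT_n)\simeq\bar\vt_k\}\prod_{j=1}^k\vQ(\bar\vt_j,\bar\vt_{j-1})$; convergence of each cylinder, which follows from $\fP_n^0\probc\varpi$; and a Scheff\'e argument once the limit is known to have mass one.

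However, the step where the fringe hypothesis actually enters is justified incorrectly. Summing \eqref{ftoef} over $\bar\vt_0,\ldots,\bar\vt_{k-1}$ does \emph{not} return $\varpi(\bar\vt_k)$. By induction on $k$, $\sum_{\bar\vt_0,\ldots,\bar\vt_{k-1}}\prod_{j=1}^k\vQ(\bar\vt_j,\bar\vt_{j-1})=Z_k(\bar\vt_k)$, the number of vertices at depth $k$ in $\bar\vt_k$. So the $\bar\vt_k$-marginal of $\varpi^{\rm EF}$ is the size-biased law $\varpi(\vt)Z_k(\vt)$, and summing in that order never uses Definition~\ref{fringedef}. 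You must sum in the opposite order: first over $\bar\vt_k$ using $\sum_{\vs}\varpi(\vs)\vQ(\vs,\bar\vt_{k-1})=\varpi(\bar\vt_{k-1})$, then over $\bar\vt_{k-1}$, and so on down to $\sum_{\bar\vt_0}\varpi(\bar\vt_0)=1$. Equivalently, show $\sum_{\vt}\varpi(\vt)Z_k(\vt)=1$. This is the same computation as your consistency check in the last paragraph, so the repair is immediate. It still has to be stated correctly, because this is exactly where mass could escape. For a star rooted at its centre, $\fP_n^0$ converges to the point mass at the one-vertex tree, which is not a fringe distribution, and $\fP_n^1$ tends to $0$ on every fixed cylinder.

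Second, your bookkeeping of $\mathrm{Err}_n$ is off. For a cylinder with all entries non-empty, vertices of depth less than $k$ cannot contribute, since their extended fringe contains $\emptyset$. Hence $\fP_n^k[(\bar\vt_0,\ldots,\bar\vt_k)]=\fP_n^0[\bar\vt_k]\prod_{j=1}^k\vQ(\bar\vt_j,\bar\vt_{j-1})$ holds exactly, with no per-cylinder error. The only defect is the total mass $\frac1n\#\{v\colon \mathrm{depth}(v)<k\}$ that $\fP_n^k$ places on tuples containing $\emptyset$. Your Scheff\'e argument (pointwise convergence on non-empty tuples plus limit mass one) is exactly what shows this tends to $0$ in probability. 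No description of where these vertices sit is needed, and the union of fringes you mention is in fact the whole tree.

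Likewise, the count needs no sum over automorphisms. If $f_0(w,\cT_n)\simeq\bar\vt_k$, then exactly $\vQ(\bar\vt_k,\bar\vt_{k-1})$ children of $w$ have fringe $\simeq\bar\vt_{k-1}$, and induction on $k$ gives the product. With these corrections the proof is complete.
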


Fringe convergence and extended fringe convergence imply convergence of functionals, such as the degree distribution. For example, with $\cT_{\varpi} \sim \varpi$ with root $0$, convergence in the sense of~\eqref{it:fringe-a} in particular implies that, for any $k\geq 0$, 
\begin{align}
\label{eqn:deg-convg-fr}
	\frac1n\,\#\set{v\in \TT_n\colon\, d(v,  \TT_n) = k+1} \convp \prob(d(0,\cT_{\varpi})=k).
\end{align}
However, both convergences give much more information about the asymptotic properties of $(\cT_n)_{n\geq 1}$ beyond its degree distribution.


\section{Alternate description of \texorpdfstring{$\BP_{\Sr}$}{BP-Mac} via Edge branching process }
\label{edgedesc}

The branching process $\BP_{\Sr}$, whilst playing a key role in the description and proof of the local limit, is less tractable when trying to extract limit information such as the behavior of the degree tail exponent. The goal of this section is to describe an alternative probabilistic construction of the same object, where the corresponding \emph{offspring} distribution of $\BP_{\Sr}$ is shown to be equivalent to a \emph{branching process with immigration}. Thus, embedded within the original branching process is another branching process. Similar phenomena have been noticed for the original preferential attachment model as well as more recent models on preferential attachment with delay \cite{BBDS04_macro}. 

Consider the branching process $\BP_{\Sr}$ in Definition~\ref{def:limit-bp-macro} (we suppress the dependence on $\delta$ to ease notation). With some abuse of notation, write $(\cP_{\Sr}(t))_{t\geq 0}$ to denote the process representing the number of children of the root, namely, for any $t\geq 0$, 
\[
\cP_{\Sr}(t) = \#\set{i\in\mathbb{N}\colon\, \sum_{j=1}^i \cE_{(j-1)\leadsto j} \leq t}.
\] 
By Theorem~\ref{thm:main-local} below, the limit degree distribution $p_{\Sr}$ is just the probability mass fuction of the random variable $\cP_{\Sr}(T_1)+1$, where $T_1\sim\exp(1)$ is independent of $\cP_{\Sr}$. Handling $\BP_{\Sr}$ and $\cP_{\Sr}$ directly by using the hazard rate descriptions in~\eqref{eqn:h01-hazard} and~\eqref{eqn:h-gen-hazard} is non-trivial, as the inter-arrival times of $\cP_{\Sr}$ are intertwined in a way that makes understanding probabilistic properties, such as the tail exponent of the degree distribution, intractable. 

We now present a ``dual'' description of the \emph{offspring point process} $\cP_{\Sr}$ in terms of another branching process $\BP_{\Sr,\ve}$, viewing this point process as a mechanism where \emph{edges reproduce}. This construction gives rise to the independence we need to analyze various asymptotics of its growth rate, and subsequently the offspring of the original process. Thus, to summarize, the \emph{offspring distribution} of the limit branching processes has \emph{another branching process} embedded in it:

\begin{defn}[Edge branching process $\BP_{\Sr, \ve}$]
\label{def:bpmave}
Consider the following branching process: 
\begin{enumerate}
\item[(a)]
At time $0$, the population consists of $1$ individual $\tilde{v}_0$. This individual reproduces according to an inhomogeneous Poisson point process with rate function 
\begin{equation}
	\label{eqn:rdel-one}
r_\ve^{\sss(1)}(t) = \frac{1+\delta}{1+\tfrac12\delta}\,(1-\eee^{-t}), \qquad t\geq 0.
\end{equation}
\item[(b)]
\label{item:rt} 
For all vertices other than the root, at time $t \ge 0$ after its birth, an existing (non-root) vertex $\ve$ reproduces according to an inhomogeneous Poisson point process with rate function 
    \begin{equation}
    \label{eqn:rdel-typical}
        r_\ve(t) = \frac{1}{1+\tfrac12\delta}\,(1-\eee^{-t}),\qquad t\geq 0,
    \end{equation} 
giving birth to a new vertex. We will sometimes refer to the vertices in this branching process as ``edges'' (towards a new individual).
\end{enumerate}
Denote the \emph{entire set of descendants}  of $\tilde{v}_0$ at time $t$ by $(\BP_{\Sr,\ve}(t))_{t \ge 0}$. 
\end{defn}
Note that other than the root, all the other vertices have the same offspring distribution given by a Poisson point process with intensity function $r_\ve$ as in \eqref{eqn:rdel-typical}.  We set up some notation to facilitate the analysis of this process for later use. Let $\mvzeta_\ve$ be the Poisson point process with rate function $r_\ve$ as in \eqref{eqn:rdel-typical}. 

\begin{enumerate}
    \item[(a)] Let $\BP^\circ_{\ve}$ denote a (continuous-time) branching process started with one individual at time $t=0$ and with offspring distribution  $\mu_{\mvzeta_{\ve}}$. Let $(\BP^\circ_{\ve,i})_{i \in \mathbb{N}_0}$ be i.i.d.\ copies of $\BP^\circ_{\ve}$ with respective sizes given by $(|\BP^\circ_{\ve,i}(t)|)_{i \in \mathbb{N}_0}$.
    \item[(b)] Let $(\chi(t))_{t \ge 0}$ denote a Poisson point process with rate $r_\ve^{\sss(1)}(t)$ as in \eqref{eqn:rdel-one}. Let $(\theta_i)_{i \in \mathbb{N}_0}$ denote the reproduction times of $\chi$, with $\theta_0=0$ (that is, $\chi(0)=1$).
\end{enumerate}

Consider the branching process with immigration $\BP_{\ve}$, starting with one immigrant, where the progeny of each new individual that enters the system according to (an independent copy of) $\BP^\circ_{\ve}$, and there is an additional incoming stream of individuals into the population at epochs of $\chi$. This describes the evolution of $\BP_{\Sr,\ve}(t)$. The size of this branching process is thus given by
\begin{equation}
    |\BP_{\Sr,\ve}(t)| := \sum_{i=0}^{\chi(t)-1} |\BP^\circ_{\ve,i}(t - \theta_i)|, \qquad t \ge 0.
\end{equation}
Define the Malthusian rate of growth of the edge process $\BP^\circ_{\ve}$ as the unique solution to the equation 
\begin{align}
        \label{eqn:malthus-edge}
        \int_0^\infty \eee^{-\gl t} r_\ve(\dd t) = 1. 
\end{align}
Inserting \eqref{eqn:rdel-typical}, we get that $\gl = \gl(\delta)$ satisfies the quadratic equation 
\begin{equation}
\label{eq:lambdaeta}
\lambda^2 + \lambda = \frac{1}{1+\tfrac12\delta}.
\end{equation}
In particular, $\lambda(\delta) = 1/\phi(\delta)$ as defined in \eqref{phidef}, which gives an alternative interpretation of $\phi(\delta)$. With the above choice of the Malthusian rate of growth $\lambda$, define the random variable 
\begin{equation}
    \label{eqn:hatmv}
    \hat\mvzeta_{\ve}(\gl) = \int_0^\infty \eee^{-\gl t} \mvzeta_{\ve}(\dd t). 
\end{equation}

The following finiteness bounds will be needed in Section \ref{sec:proofs}:

\begin{lem}[Moment bounds]
\label{prop:zeta-bounds}
$\E([\hat\mvzeta_{\ve}(\gl)]^l) < \infty$ for every $l \geq 1$. This implies that for every $l\geq 1$, there exist constants $H_l^{\circ}, H_l^\prime < \infty$, such that
\begin{equation}
\label{eqn:sup-bound}
	\sup_{t\geq 0} \E\left[\big[\ee^{-\lambda t} |\BP^\circ_{\ve}(t)|\big]^l\right] \leq H_l^{\circ}, 
	\qquad  \Rightarrow \sup_{t\geq 0} \E\left[\big[\ee^{-\lambda t} |\BP_{\Sr,\ve}(t)|\big]^l\right] \leq H_l^\prime. 
\end{equation}
\end{lem}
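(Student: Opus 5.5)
The plan has three steps: moments of $\hat\mvzeta_\ve(\gl)$ via Poisson cumulants, uniform moments of $\ee^{-\gl t}|\BP^\circ_\ve(t)|$ via a renewal-type recursion, and then the immigration superposition.

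\emph{Step 1: moments of $\hat\mvzeta_\ve(\gl)$.} Since $\mvzeta_\ve$ is a Poisson point process with intensity $r_\ve$, the variable $\hat\mvzeta_\ve(\gl)=\int_0^\infty \ee^{-\gl t}\mvzeta_\ve(\dd t)$ is a Poisson functional. Its Laplace transform is
\[
\E\big[\ee^{-s\hat\mvzeta_\ve(\gl)}\big]=\exp\Big(-\int_0^\infty (1-\ee^{-s\ee^{-\gl t}})\,r_\ve(t)\,\dd t\Big).
\]
Consequently its $l$-th cumulant equals $\kappa_l=\int_0^\infty \ee^{-l\gl t} r_\ve(t)\,\dd t$. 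Because $r_\ve(t)\le (1+\tfrac12\delta)^{-1}$ and $\gl>0$, every $\kappa_l$ is finite; in particular $\kappa_1=1$ by the Malthusian relation \eqref{eqn:malthus-edge}. Moments are polynomials in the cumulants, so all moments are finite.

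\emph{Step 2: uniform moments of $\ee^{-\gl t}|\BP^\circ_\ve(t)|$.} Write $X(t)=\ee^{-\gl t}|\BP^\circ_\ve(t)|$. Decomposing at the children of the root, with birth times $\tau_1,\tau_2,\dots$ from $\mvzeta_\ve$, gives
\[
X(t)=\ee^{-\gl t}+\sum_i \ee^{-\gl\tau_i}\,X_i(t-\tau_i)\,\ind\{\tau_i\le t\},
\]
where the $X_i$ are i.i.d.\ copies of $X$, independent of $\mvzeta_\ve$. I would prove $\sup_t \E[X(t)^l]<\infty$ by induction on $l$.

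For $l=1$, the mean $m(t)=\E X(t)$ satisfies the renewal equation $m(t)=\ee^{-\gl t}+\int_0^t m(t-s)\,\ee^{-\gl s}r_\ve(s)\,\dd s$. The kernel $\ee^{-\gl s}r_\ve(s)\,\dd s$ is a probability density by \eqref{eqn:malthus-edge}, and the forcing term $\ee^{-\gl t}$ is directly Riemann integrable. The key renewal theorem therefore shows that $m(t)$ converges, hence is bounded.

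For general $l$, expand $\E[X(t)^l]$ with the multinomial formula, conditioning on $\mvzeta_\ve$. There are two kinds of terms.
\begin{itemize}
\item The diagonal term $\int_0^t \ee^{-l\gl s}\E[X(t-s)^l]\,r_\ve(s)\,\dd s$ has kernel mass $\kappa_l=\int_0^\infty \ee^{-l\gl s}r_\ve(s)\,\dd s<\kappa_1=1$ when $l\ge2$. This term is therefore a defective-renewal contraction.
\item The remaining cross terms involve only moments of order less than $l$ of the $X_i$, which are bounded by the induction hypothesis. They are multiplied by factorial moments of $\mvzeta_\ve$ weighted by $\ee^{-\gl\tau}$, and these are bounded by $\E[\hat\mvzeta_\ve(\gl)^l]$ from Step 1.
\end{itemize}
Hence $u_l(t)=\E[X(t)^l]$ satisfies $u_l(t)\le C_l+\kappa_l\sup_{s\le t}u_l(s)$. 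This yields $\sup_t u_l(t)\le C_l/(1-\kappa_l)=:H^\circ_l$. A truncation argument (or noting $u_l(t)<\infty$ for each finite $t$, by domination by a Yule process) justifies taking the supremum.

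\emph{Step 3: the immigration process.} From the representation $|\BP_{\Sr,\ve}(t)|=\sum_{i=0}^{\chi(t)-1}|\BP^\circ_{\ve,i}(t-\theta_i)|$, write
\[
\ee^{-\gl t}|\BP_{\Sr,\ve}(t)|=\sum_i \ee^{-\gl\theta_i}X_i(t-\theta_i).
\]
This has exactly the structure of the recursion in Step 2, with $\mvzeta_\ve$ replaced by $\chi$ plus the atom at $\theta_0=0$. The intensity of $\chi$ is $(1+\delta)\,r_\ve$, so $\hat\chi(\gl)=\int_0^\infty \ee^{-\gl t}\chi(\dd t)$ again has all moments finite, by the same cumulant computation as in Step 1. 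Expanding the $l$-th power, each term is a product of at most $l$ factors $X_i$. Using the independence of the $X_i$ from each other and from $\chi$, Hölder's inequality together with $H^\circ_{j}$ for $j\le l$ bounds every term uniformly in $t$, giving $H'_l$. Here no contraction is needed, since the $X_i$ are already controlled.

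The main obstacle is Step 2 for general $l$. There one must carefully organize the multinomial expansion so that only the diagonal term carries the top moment, and verify that the contraction constant $\kappa_l<1$ for $l\ge2$. The case $l=1$ needs the renewal theorem instead, since $\kappa_1=1$ gives no contraction.
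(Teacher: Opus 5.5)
Your proposal is correct. Steps~1 and~3 are essentially the paper's argument. In Step~1 you use the Campbell/cumulant formula for the Poisson functional. In Step~3 you condition on $\chi$ and use independence of the $\BP^\circ_{\ve,i}$ to reduce to moments of $\sum_i\ee^{-\gl\theta_i}$. The only cosmetic difference there is that the paper uses the weighted bound $(\sum_i a_ib_i)^l\le(\sum_i a_i)^{l-1}\sum_i a_ib_i^l$, which needs only $H^\circ_l$, while your multinomial expansion uses all $H^\circ_j$, $j\le l$.

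The genuine difference is Step~2. The paper does not prove the uniform bound on $\E[(\ee^{-\gl t}|\BP^\circ_\ve(t)|)^l]$; it invokes Theorem~4 of \cite{mori2019moments}, which links finiteness of the moments of $\hat\mvzeta_\ve(\gl)$ to uniform moment bounds for the normalized process. You instead prove it by hand, by decomposing at the children of the root and inducting on $l$:
\begin{itemize}
\item the key renewal theorem handles $l=1$;
\item for $l\ge2$, the top-order term is a defective renewal term with mass $\int_0^\infty\ee^{-l\gl s}r_\ve(s)\,\dd s=\kappa_l<\kappa_1=1$;
\item all cross terms involve only lower moments, times weighted factorial moments controlled by $\E[(1+\hat\mvzeta_\ve(\gl))^l]$;
\item domination by a Yule process of rate $(1+\tfrac12\delta)^{-1}$ gives local finiteness, so the supremum can be absorbed.
\end{itemize}
This is the standard mechanism behind such moment theorems, specialized to the present Poisson offspring process. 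It costs some length but makes the lemma self-contained, with recursively explicit constants $H^\circ_l$. The paper's citation is shorter but rests on an external theorem whose hypotheses it does not check.

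Your Step~3 also has a small advantage. You feed the atom at $\theta_0=0$ and the intensity $(1+\delta)r_\ve$ of $\chi$ directly into the cumulant computation. The paper's closing comparison with $(1+\delta)^l\E[\hat\mvzeta_\ve(\gl)^l]$ ignores that atom. Even without the atom, it fails for $\delta<0$ and $l\ge2$, because Poisson moments do not scale like the $l$-th power of the intensity factor. This is harmless, since only finiteness is needed.
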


\begin{proof}
We start with the first assertion. For this it is enough to show the finiteness of the $l$-th cumulant. For simplicity, let $c = \frac{1}{1 + \frac{1}{2}\delta}$, so the rate function is $r_{\mathbf{e}}(t) = c(1 - \mathrm{e}^{-t})$.   For a Poisson point process with rate function $r_{\ve}$, by Campbell's formula, the $l$-th cumulant $\kappa_l$ of the random variable $X = \int_{0}^{\infty} g(t) \mvzeta(\mathrm{d}t)$ is given by:
\[
\kappa_l = \int_{0}^{\infty} (g(t))^l r_{\mathbf{e}}(t) \mathrm{d}t.
\]
Computing the $l$-th cumulant for $\hat{\mvzeta}_{\mathbf{e}}(\lambda)$ gives
\[
\kappa_l = c \left( \frac{1}{l\lambda} - \frac{1}{l\lambda + 1} \right) = \frac{c}{l\lambda(l\lambda + 1)} < \infty. 
\]
The first assertion of \eqref{eqn:sup-bound} follows from the equivalence of the existence of moments of $\hat\mvzeta_{\ve}(\gl)$ and the corresponding  moments of the normalized branching process, see~\cite[Theorem 4]{mori2019moments}. To prove the second assertion, note that by H\"older's inequality $\sum_{i=0}^n |a_ib_i| \le \left(\sum_{i=0}^n |a_i|^p\right)^{1/p}\left(\sum_{i=0}^n |b_i|^q\right)^{1/q}$, with $n=\chi(t)-1$, $p=\frac{l}{l-1}, q = l$, $a_i = \ee^{-\gl \frac{l-1}{l} \theta_i}$, $b_i = \ee^{-\gl \frac{1}{l} \theta_i} \ee^{-\gl (t-\theta_i)}|\BP^{\circ}_{\ve,i}(t - \theta_i)|$, 
\begin{align*}
\E\left(\left[\ee^{-\lambda t}|\BP_{\Sr,\ve}(t)|\right]^l\right) 
&= \E\left(\left[\sum_{i=0}^{\chi(t)-1} \ee^{-\gl \theta_i} \ee^{-\gl (t-\theta_i)}|\BP^{\circ}_{\ve,i}(t - \theta_i)|\right]^l\right)\\
&\le \E\left(\left[\sum_{i=0}^{\chi(t)-1} \ee^{-\gl \theta_i}\right]^{l-1}\left[\sum_{i=0}^{\chi(t)-1} 
\ee^{-\gl \theta_i}\left(\ee^{-\lambda (t-\theta_i)}|\BP^{\circ}_{\ve,i}(t - \theta_i)|\right)^l\right]\right)\\
& \le H^\circ_l \,\E\left(\left[\sum_{i=0}^{\chi(t)-1} \ee^{-\gl \theta_i}\right]^{l}\right) \le
 H^\circ_l \,\E\left(\left[\int_0^\infty \ee^{-\gl t}\chi(\dd t)\right]^l\right),
\end{align*}
where we have applied the independence of $\chi$ and $(\BP^{\circ}_{\ve,i})_{i \in \mathbb{N}_0}$ in the second inequality. Noting that, by the form of $r_{\ve}^{\sss(1)}$, 
\[\E\left(\left[\int_0^\infty \mathrm{e}^{-\gl t}\chi(dt)\right]^l\right) \leq (1+\delta)^l \E([\hat\mvzeta_{\ve}(\gl)]^l) < \infty,\]
completes the proof. 
\end{proof}

Next, we explain the connection between the edge branching process and the limit degree distribution.
The following proposition will be needed in Section \ref{sec:proofs}.

\begin{prop}[Equivalence]
\label{prop:edge-equiv-bp} 
The following distributional equivalence holds:
    \begin{equation}
    \label{eq:degeq}
        (\cP_{\Sr}(t)+1)_{t\geq 0} \stackrel{d}{=} (|\BP_{\ve}(t)|)_{t\geq 0}.
    \end{equation}
Thus, $(\BP_{\Sr,\ve}(t))_{t \ge 0} \stackrel{d}{=} (\BP_{\Sr}(t))_{t \ge 0}$. Consequently, with $D_{\Sr} = \cP_{\Sr}(T_1) +1$ denoting the limit degree random variable in Theorem~\ref{thm:main-local} with distribution $p_{\Sr}(\cdot)$, 
\[
D_{\Sr} \stackrel{d}{=} |\BP_{\ve}(T_1)|,
\] 
where $T_1 \sim \exp(1)$ is independent of $\BP_{\ve}$.   
\end{prop}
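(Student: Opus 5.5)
The plan is to compare the two processes through their conditional jump intensities. Both $(\cP_{\Sr}(t)+1)_{t\ge0}$ and $(|\BP_{\Sr,\ve}(t)|)_{t\ge0}$ are simple counting processes started at $1$. A simple point process is determined in law by its compensator with respect to its own natural filtration, via the Jacod characterization: the law is determined by the successive conditional hazard rates of the inter-arrival times. It therefore suffices to show that, given the first $k-1$ jump times $\sigma_1<\dots<\sigma_{k-1}$ of $|\BP_{\Sr,\ve}|$, the hazard rate of the next jump at time $\sigma_{k-1}+x$ equals $h_{(k-1)\leadsto k}(x)$ in \eqref{eqn:h-gen-hazard}. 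For $k=1$ this should match \eqref{eqn:h01-hazard}.

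To compute this intensity, I would label the individuals of $\BP_{\Sr,\ve}$ by birth order. The root $\tilde v_0$ is born at time $0$, and the $j$-th non-root individual is born at time $\sigma_j$. By Definition~\ref{def:bpmave}, each individual reproduces according to an independent inhomogeneous Poisson process, with rate depending only on its own age. So, conditionally on the history up to time $t$, the total birth intensity at time $t$ is the sum of the age-dependent rates of all living individuals:
\[
\frac{1+\delta}{1+\frac12\delta}\bigl(1-\ee^{-t}\bigr)+\frac{1}{1+\frac12\delta}\sum_{j=1}^{k-1}\bigl(1-\ee^{-(t-\sigma_j)}\bigr),\qquad \sigma_{k-1}\le t<\sigma_k .
\]
This step uses the independent-increments property of Poisson processes: a Poisson process's future given its past is unaffected, and births of new individuals start fresh Poisson clocks. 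Substituting $t=\sigma_{k-1}+x$ gives exactly $h_{(k-1)\leadsto k}(x)$. Note that only the birth times of the individuals matter for the intensity, not the genealogy within $\BP_{\Sr,\ve}$. Hence the projection onto the counting process $|\BP_{\Sr,\ve}(\cdot)|$ has this intensity with respect to its own filtration, and the filtration reduction is harmless. The same computation applied to the immigration representation shows the equality with $|\BP_{\ve}|$: the immigrant stream $\chi$ plays the role of the root's offspring, and each copy $\BP^\circ_{\ve,i}$ supplies the descendants.

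From the intensity identity, the joint law of $(\sigma_j)_{j\ge1}$ coincides for the two processes. Explicitly, the density of $(\sigma_1,\dots,\sigma_k)$ is the product of hazard times survival factors, which is identical on both sides. This gives \eqref{eq:degeq} as processes. The statement ``$\BP_{\Sr,\ve}\stackrel{d}{=}\BP_{\Sr}$'' should be read as equality of the offspring point processes, namely the root's offspring in $\BP_{\Sr}$ versus the total population of the edge process. Finally, $T_1$ is independent of both processes, so conditioning on $T_1=s$ and integrating yields $D_{\Sr}=\cP_{\Sr}(T_1)+1\stackrel{d}{=}|\BP_{\ve}(T_1)|$.

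The main obstacle is the rigorous justification of the compensator computation. One must check that the intensity with respect to the full genealogical filtration of $\BP_{\Sr,\ve}$ is measurable with respect to the coarser filtration generated by the counting process alone, so that it is also the intensity there. This holds because the intensity expression depends only on $\sigma_1,\dots,\sigma_{k-1}$. Non-explosion must also be verified, so that both processes are determined for all $t$; this follows from the linear bound on the rates, since the total intensity is at most $C\cdot(\text{population size})$, which is Yule-type growth.
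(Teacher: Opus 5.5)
Your proposal is correct and follows essentially the same route as the paper. Both arguments use superposition of the independent age-dependent Poisson clocks to identify the conditional hazard of the next birth in the edge process as $r_\ve^{\sss(1)}(t)+\sum_{j=1}^{k-1} r_\ve(t-\sigma_j)$, and then note that substituting $t=\sigma_{k-1}+x$ gives exactly $h_{(k-1)\leadsto k}(x)$. Your extra points are details the paper leaves implicit, and they are sound: the intensity is measurable with respect to the counting process's own filtration, so the reduction from the genealogical filtration is legitimate; the process is non-explosive; the ``Thus'' clause is best read as an identity of offspring point processes rather than of whole trees; and integrating over the independent $T_1$ gives the final claim.
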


\begin{proof}[Proof of Proposition~\ref{prop:edge-equiv-bp}]
Let $N(t) = \abs{\BP_{\ve}(t)}$ denote the total number of individuals in the edge branching process at time $t$. We aim to show that the counting process $(N(t))_{t \ge 0}$ has the same probability law as $(\cP_{\Sr}(t) + 1)_{t \ge 0}$. Since both processes start with exactly $1$ individual at time $t=0$, it suffices to demonstrate that the conditional hazard rates for the inter-arrival times of new births are identical for both processes, given their history.

We begin by considering the initial birth. At time $t=0$, the population consists solely of the root vertex $\tilde{v}_0$, which generates its first offspring according to an inhomogeneous Poisson point process with the rate function
\eqn{
    r_\ve^{\sss(1)}(t) = \frac{1+\delta}{1+\tfrac12\delta}\,(1-\eee^{-t}), \qquad t\geq 0.
}
The time until this first arrival naturally has a hazard rate equal to this intensity function, exactly matching the hazard function $h_{0 \leadsto 1}(x)$ defined for $\sE_{0 \leadsto 1}$ in the process $\cP_{\Sr}$.

To see that this correspondence holds for all subsequent births, assume the process currently has $k$ individuals (the root and $k-1$ descendants) at time $t$. Let $\sigma_0 = 0$ be the birth time of the root, and let $\sigma_1 < \sigma_2 < \dots < \sigma_{k-1} \le t$ be the realized birth times of the $k-1$ descendants. Because every individual in $\BP_{\ve}$ reproduces independently, we can use the superposition property of independent Poisson processes. The aggregate intensity (or hazard rate) for the next birth in the entire population is simply the sum of the individual reproduction rates of all existing vertices evaluated at their respective current ages. The root vertex reproduces at rate $r_\ve^{\sss(1)}(t)$, while any other vertex $j$ (born at time $\sigma_j$) reproduces at rate
\eqn{
    r_\ve(t - \sigma_j) = \frac{1}{1+\tfrac12\delta}\,(1-\eee^{-(t-\sigma_j)}).
}

Summing these individual rates yields the aggregate hazard rate $\lambda_k(t)$ for the next birth at time $t > \sigma_{k-1}$, given by
\eqn{
    \lambda_k(t) = r_\ve^{\sss(1)}(t) + \sum_{j=1}^{k-1} r_\ve(t - \sigma_j).
}
To align this with the inter-arrival hazard function of $\cP_{\Sr}$ in \eqref{eqn:h-gen-hazard}, we express this total rate in terms of $x$, the time elapsed since the most recent birth. Setting $x = t - \sigma_{k-1}$ (so that $t = x + \sigma_{k-1}$) and substituting the explicit formulas for the rate functions $r_\ve^{\sss(1)}$ and $r_\ve$, we obtain
\eqn{
    \lambda_k(x + \sigma_{k-1}) = \frac{1+\delta}{1+\tfrac12\delta}(1- \eee^{-(x+\sigma_{k-1})}) + \sum_{j=1}^{k-1} \frac{1}{1+\tfrac12\delta}(1- \eee^{-(\sigma_{k-1} - \sigma_j + x)}).
}
Factoring out the constant in the sum yields
\eqn{
    \lambda_k(x + \sigma_{k-1}) = \frac{1+\delta}{1+\tfrac12\delta}(1- \eee^{-(x+\sigma_{k-1})}) 
    + \frac{1}{1+\tfrac12\delta}\sum_{j=1}^{k-1}(1- \eee^{- (\sigma_{k-1} - \sigma_j +x)})
    = h_{(k-1)\leadsto k}(x),
}
where, as  in \eqref{eqn:h-gen-hazard}, $h_{(k-1)\leadsto k}(x)$ is the hazard rate defining the inter-arrival time $\sE_{(k-1) \leadsto k}$ of the point process $\cP_{\Sr}$, and we recall that, in both contexts, $\sigma_j$ represents the absolute time of the $j$-th arrival equal to $\sum_{i=1}^j \sE_{(i-1) \leadsto i}$. Because the initial states are identical and the sequence of conditional hazard rates for all subsequent jump times match exactly, the joint distribution of the inter-arrival times for $\abs{\BP_{\ve}(t)}$ is identical to that of $\cP_{\Sr}(t) + 1$. Thus, the two processes are distributionally equivalent, i.e.,
\eqn{
    (\cP_{\Sr}(t)+1)_{\geq 0} \equald (\abs{\BP_{\ve}(t)})_{t\geq 0}.
}
\end{proof}

The above construction will play a crucial role towards understanding the tail behavior of the limiting degree distribution. For later use, we write $\cP_{\ve}$ for the edge offspring point process, namely, the inhomogeneous Poisson point process with rate function $r_\ve$ as defined above. 

For proving extremity of the fringe distribution, by Aldous \cite{aldous-fringe}, the following is enough:

\begin{lem}[Extremity of limiting fringe distribution]
	\label{lem:mean-one}
Recall the edge branching process $\BP_\ve$, and let $T_1$ denote an independent exponential mean-$1$ random variable. 
Then $\E[|\BP_\ve(T_1)|]=2$, and hence $\E[\cP_\Sr(T_1)]=1$.
\end{lem}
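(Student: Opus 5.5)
The plan is to compute $\E[|\BP_\ve(T_1)|]$ directly from the edge description in Definition~\ref{def:bpmave}, using Laplace transforms. For a measurable $f\ge 0$ with $T_1\sim\exp(1)$ independent of $\BP_\ve$, Fubini gives $\E[f(T_1)]=\int_0^\infty \ee^{-t}f(t)\,\dd t=\mathcal{L}f(1)$. So it suffices to compute the Laplace transform at $s=1$ of the mean function $m(t)=\E|\BP_{\Sr,\ve}(t)|$. The conclusion $\E[\cP_\Sr(T_1)]=1$ then follows immediately from the distributional identity $(\cP_\Sr(t)+1)_{t\ge0}\stackrel{d}{=}(|\BP_\ve(t)|)_{t\ge0}$ in Proposition~\ref{prop:edge-equiv-bp}.

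\emph{Step 1: renewal equations.} Write $c=1/(1+\tfrac12\delta)$ and $m^\circ(t)=\E|\BP^\circ_\ve(t)|$. Condition on the first generation: individuals are born at the points of a Poisson process with intensity $r_\ve$, and each starts an independent copy of $\BP^\circ_\ve$. Campbell's formula then yields
\[
m^\circ(t)=1+\int_0^t r_\ve(s)\,m^\circ(t-s)\,\dd s,
\qquad
m(t)=1+\int_0^t r^{\sss(1)}_\ve(s)\,m^\circ(t-s)\,\dd s .
\]
The second equation is just the mean of the immigration representation $|\BP_{\Sr,\ve}(t)|=\sum_{i=0}^{\chi(t)-1}|\BP^\circ_{\ve,i}(t-\theta_i)|$.

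\emph{Step 2: finiteness of the transforms.} Before transforming, I would check that the transforms at $s=1$ are finite. By Lemma~\ref{prop:zeta-bounds} with $l=1$, $m^\circ(t)\le H_1^\circ\ee^{\lambda t}$ and $m(t)\le H_1'\ee^{\lambda t}$. Here $\lambda=1/\phi(\delta)$ solves $\lambda^2+\lambda=c$. Since $\delta>-1$ we have $c<2$, and hence $\lambda<1$. Therefore $\int_0^\infty \ee^{-t}m(t)\,\dd t<\infty$, so all Laplace transforms below are finite at $s=1$ and the convolution identities may be transformed term by term. This is the only point needing care, and it is the step I regard as the main obstacle; it is handled by the a priori growth bound.

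\emph{Step 3: computation.} We have $\mathcal{L}r_\ve(s)=c\bigl(\tfrac1s-\tfrac1{s+1}\bigr)=\tfrac{c}{s(s+1)}$ and $\mathcal{L}r^{\sss(1)}_\ve(s)=\tfrac{(1+\delta)c}{s(s+1)}$. Transforming the first renewal equation gives
\[
\mathcal{L}m^\circ(s)=\frac{1/s}{1-c/(s(s+1))},
\]
so $\mathcal{L}m^\circ(1)=1/(1-c/2)$. This is legitimate because $1-c/2=\tfrac{1+\delta}{2+\delta}>0$, which is consistent with $\lambda<1$. Transforming the second equation gives
\[
\mathcal{L}m(1)=1+\frac{(1+\delta)c}{2}\cdot\frac{1}{1-c/2}
=1+\frac{1+\delta}{2+\delta}\cdot\frac{2+\delta}{1+\delta}=2 .
\]
Hence $\E|\BP_\ve(T_1)|=2$, and by Proposition~\ref{prop:edge-equiv-bp}, $\E[\cP_\Sr(T_1)]=2-1=1$.
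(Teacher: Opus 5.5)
Your proposal is correct and is essentially the paper's proof. The paper integrates the renewal equations for $\ee^{-t}m(t)$ and $\ee^{-t}m^{\circ}(t)$ over $[0,\infty)$, which is exactly your Laplace transform at $s=1$, and your Step~2 supplies the finiteness of $\int_0^\infty \ee^{-t}m^\circ(t)\,\dd t$ that the paper uses implicitly when it solves for this integral.

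One caveat: justify the second renewal equation directly from Definition~\ref{def:bpmave}, where the root reproduces at rate $r^{\sss(1)}_\ve$ only, rather than as the mean of the displayed immigration sum. Read literally, that sum starts with a full copy $\BP^\circ_{\ve,0}$ at the root, so its mean would have $m^\circ(t)$ in place of $1$.
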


\begin{proof}
Recall that, for $\delta \neq 0$, the root has a different rate $r^{\sss(1)}_\ve$ than the rest of the vertices, which have rates $r_\ve$. Let $m(t) = \E[|\BP(t)|]$, and let $\fm(t) = \eee^{-t}m(t)$. Let $\BP^{\sss(0)}(\cdot)$ denote the branching process distribution of all subsequent generations from the root, and let $\fm^{\sss 0}(t) = \eee^{-t} \E[\BP^{\sss(0)}(t)]$. Note that, by construction, 
\[
\fm(t) = \eee^{-t} + \int_0^t \eee^{-s} r_\ve^{\sss(1)}(s)\fm^{\sss(0)}(t-s)\, \dd s.  
\]
Similarly, 
\[
\fm^{\sss (0)}(t) = \eee^{-t} + \int_0^t \eee^{-s} r_\ve(s) \fm^{\sss(0)}(t-s)\, \dd s.
\]
Using \eqref{eqn:rdel-typical}, we can easily verify that 
\[
\int_0^\infty \fm^{\sss(0)}(t)\, \dd t = 1+ \frac{1}{2(1+\tfrac12\delta)} \int_0^\infty \fm^{\sss(0)}(t)\,\dd t.
\]
This implies that 
\[
\int_0^\infty \fm^{\sss(0)}(t)\, \dd t = \frac{2+\delta}{1+\delta}. 
\]
Thus, 
\[
\E[|\BP_{\ve}(T_1)|] = \int_0^\infty \fm(t)\,\dd t = 1 + \int_0^\infty \eee^{-s} r_\ve^{\sss(1)}(s)\, \dd s 
\times \frac{2+\delta}{1+\delta} = 2.
\]
Since the expected degree is $\E[\cP_\Sr(T_1)] = \E[|\BP_\ve(T_1)| -1]$, this settles the claim.
\end{proof}


\section{Proofs}
\label{sec:proofs}

In this section, we complete the proofs of our main results. We prove Theorem~\ref{thm:main-local} in Section \ref{sec:proof-main-local},
Theorem~\ref{thm:tail} in Section \ref{sec:tail-exp}, and Theorem~\ref{degree_growth} in Section \ref{sec:max-deg-proof}.


\subsection{Proof of local convergence: Theorem~\ref{thm:main-local}}
\label{sec:proof-main-local}

The main goal of this section is the following proposition. In its statement, and for trees $\vs,\vt$we write $\vs=\vt$ when they are equal aas ordered (Ulam-Harris) trees.

\begin{prop}[Convergence of root-neighbourhood tree distribution]
\label{prop:expec-fringe}
For every $\bt \in \bbT$ and $r \in \bN$,
\begin{equation}
\label{eqn:limmatch}
\lim_{n\to\infty} \pr(\bbT^r_{o_n} = \bt) = \pr(\bbT^r_\emptyset = \bt),
\end{equation}
where $o_n$ is a vertex drawn uniformly at random from $[n]$, $\bbT^r_{o_n}$ is the $r$-neighbourhood of $o_n$, and
$\bbT^r_\phi$ is the $r$-neighbourhood of $\emptyset$ in the {\tt sin}-tree with distribution $\varpi_{\Sr,\delta}$. 
\end{prop}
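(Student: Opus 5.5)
The plan is to prove \eqref{eqn:limmatch} by an explicit computation in the discrete model and then identify the limit with the branching process of Definition~\ref{def:limit-bp-macro}. Since the tree is rooted at $v_0$ and every vertex attaches to an older one, the $r$-neighbourhood of $o_n$ is determined by two pieces of data. The first is the fringe below $o_n$ together with the fringes hanging off its first $r$ ancestors. The second is the ages of the vertices involved. I would first reduce to the case of a fixed ordered tree $\bt$ whose vertices carry birth ranks, and write $\pr(\bbT^r_{o_n}=\bt)$ as a sum over $o_n=v_{i}$ with $i=\lfloor un\rfloor$, $u\in(0,1]$, over the ancestor labels, and over the labels of the finitely many descendants appearing in $\bt$. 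The uniform choice $u$ plays the role of $\eee^{-T_1}$: on the time scale $t=\log(m/n)$ the birth index $m$ corresponds to continuous time $t$, and vertex $i=un$ has age $\log(1/u)\sim\exp(1)$. This mirrors the heuristic of Section~\ref{intuition}.

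The key step is a hazard-rate computation in the discrete model for a fixed finite set of vertices $w_1,\dots,w_K$ born at times $m_1<\dots<m_K$, all of order $n$. Given $\FF_m$, the probability that $v_{m+1}$ attaches to $w_j$ is $\theta(w_j,m)/(m(m+1)(1+\frac12\delta))$. By \eqref{eq:weightchoice},
\[
\theta(w_j,m)=(1+\delta)(m-m_j+1)+\sum_{l}(m-s_l)_+,
\]
where the $s_l$ are the times at which $w_j$ gained children. Dividing by $m^2(1+\frac12\delta)$ and writing $m=m_j\eee^{x}$, the attachment probability per unit continuous time, $\dd m = m\,\dd x$, becomes exactly $h_{(k-1)\leadsto k}(x)$ from \eqref{eqn:h-gen-hazard}, up to errors of order $O(1/n)$ uniformly on compacts. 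Note that the factor $1-\eee^{-(\sigma_{k-1}-\sigma_j+x)}$ comes from $(m-s_l)/m$. Events of attachment to distinct vertices among the $K$ occur in disjoint steps, and the total probability of attachment to this finite set per step is $O(1/m)$. The joint law of the child-arrival times of $w_1,\dots,w_K$, rescaled logarithmically, therefore converges to that of independent copies of the point process $\cP_{\Sr}$. Here each newborn child starts its own copy at its birth time, which reproduces the genealogy of $\BP_\Sr$. I would make this rigorous by writing the probability of a specified labelled configuration as a product of one-step probabilities and comparing it with the product-of-hazards density. This yields a Riemann-sum convergence to the branching-process density, since the weights are deterministic functionals of the past configuration restricted to the $K$ vertices.

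For the ancestors, the path from $o_n$ to the root leaves the order-$n$ window: the parent of $v_i$ is $v_j$ with $j\approx iU$, and after $r$ steps the ancestors are still of order $n$. I would show that the parent-choice law for $v_i$, which is proportional to $\theta(v_j,i)$, converges on the logarithmic scale to a size-biased law. This must be checked against \eqref{ftoef}, i.e.\ against the sin-tree built from the fringe distribution $\varpi_{\Sr,\delta}$. To do so, I would verify that the product formula for the full configuration matches $\varpi(\vt_r)\prod\vQ$. Lemma~\ref{lem:mean-one} guarantees that the limit is a genuine probability distribution, i.e.\ that no mass escapes.

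The main obstacle is tightness and uniformity. The sum over $i$ includes small $u$ (old vertices, with large degrees and long times) and vertices near $n$. Moreover, the neighbourhood has to be controlled so that no vertex in it has unboundedly many children, which would make the finite product approximation fail. I would handle this by truncating to $u\in[\varepsilon,1]$ and to configurations of bounded size. The remaining mass would be controlled using the first-moment bound that the expected degree at time $n$ of $v_{un}$ is $O(u^{-1/\phi})$, derived from the comparison with $\BP_{\Sr,\ve}$ and Lemma~\ref{prop:zeta-bounds}. Finally I would let $\varepsilon\to0$.
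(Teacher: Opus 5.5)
Your proposal is correct in outline, and for the descendants it follows the paper's proof. The paper also marks each vertex by $\tau/n$ and writes the probability of a labelled configuration as a product of edge-present factors $W_{\tau_{p(s)}}(\tau_s-1)/D^\delta(\tau_s-1)$ and edge-absent factors. It evaluates both by telescoping, using, as you do, that the total weight $D^\delta(u)$ is deterministic. It then matches the limit of $n^{|V(\bt)|}\pr(\bar\bbT^r_{o_n}=\bar\bt)$ with the hazard density of $\BP_{\Sr}$ in the variables $\sigma=\log a$.

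The genuine difference is the ancestors. The paper computes only the \emph{forward} $r$-neighbourhood of $o_n$, i.e.\ it proves $\E(\fP^0_n)\to\varpi_{\Sr,\delta}$. It gets the ancestral part, and convergence in probability, from Aldous: Lemma~\ref{lem:mean-one} makes $\varpi_{\Sr,\delta}$ a fringe distribution, so Theorem~\ref{thm:aldous-efr-pfr} and Lemma~\ref{ftoeflemma} apply. Your direct matching of the size-biased parent choice with $\varpi(\vt_r)\prod\vQ$ amounts to re-deriving Aldous' Propositions~10--11 in the marked continuum setting. There the top ancestor $p_r$ enters only through $\E[\theta(p_r,\tau_{p_{r-1}}-1)]$ over its unobserved children, which must be matched with the mean intensity of $\cP_{\Sr}$ by a Campbell-type identity. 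This is feasible and self-contained for the annealed statement. However, it is extra work the paper avoids, and it does not by itself give the in-probability convergence needed in Theorem~\ref{thm:main-local}.

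Your truncation is more than you need. Pointwise convergence of the mark densities together with Fatou gives $\liminf_n\pr(\bbT^r_{o_n}=\bt)\ge\pr(\bbT^r_\emptyset=\bt)$ for every $\bt$. Both sides are probability distributions on a countable set: the limit because $\BP_{\Sr}(T_1)$ is a.s.\ finite, and, for the full neighbourhood, by Lemma~\ref{lem:mean-one}. Equality then follows for each $\bt$; this is exactly what the paper leaves implicit when it ``integrates out the marks''.

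If you still want the bound $\E[d(v_{un},n)]=O(u^{-1/\phi})$, do not derive it from $\BP_{\Sr,\ve}$ and Lemma~\ref{prop:zeta-bounds}. These concern only the limit object and are not coupled to the finite tree. The bound follows instead from the linear first-moment recursion for $(D_m,Z_{2,m})$ in the proof of Lemma~\ref{lem:lowerbound}, started at time $i$.
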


\noindent {\bf Completing the proof of Theorem \ref{thm:main-local}.} By Lemma \ref{lem:mean-one}, $\varpi_{\Sr,\delta}$ is a fringe distribution, while \cite{aldous-fringe} shows that this limit distribution, derived from a continuous-time branching process, is an extremal fringe distribution. Thus, Theorem \ref{thm:aldous-efr-pfr} completes the proof. 

\begin{proof}[Proof of Proposition \ref{prop:expec-fringe}:]
The proof proceeds in 6 steps.

\medskip\noindent
{\bf 1.}
Let $\bar\bt = (\bt,(a_v)_{v \in V(\bt)})$ be the marked version of the tree $\bt$, with marks $a_v \in [0,1]$ for all vertices $v \in V(\bt)$. Let $\bbT^r_{o_n}$ be the forward $r$-neighbourhood of $o_n$ in $\bar\bbT$ (i.e., consisting of $o_n$ and all its younger descendants). Let $\bar\bbT^r_{o_n}$ be the marked forward $r$-neighbourhood of $o_n$ in $\bar\bbT$, obtained by setting
\[
\bar\bbT^r_v = \big(\bbT^r_v, (\tfrac{u}{n})_{u \in V(\bbT^r_v)}\big), \qquad v \in [n].
\]    
Then
\eqn{
\label{marked-tree}
\pr(\bar\bbT^r_{o_n} = \bar\bt) = \pr\big(\bbT^r_{o_n} = \bt, \tau_\omega = \lceil n a_\omega \rceil 
\,\,\forall\,\omega \in V(\bt)\big),
}
where $\tau_\omega$ is the vertex in $\bbT^r_{o_n}$ corresponding to vertex $\omega$ in $\bt$, representing the time at which vertex $\omega$ enters and connects. We define the vertex weight
	\eqn{
	\label{vertex-weight}
	W_v(t)=\theta(v,t).
	}
The right-hand side of \eqref{marked-tree} equals (see Figure \ref{fig:discrete_tree})
\begin{equation}
\label{eqn:product}
\prod_{s \in \{\tau_u\colon u \in V(\bt)\}} \frac{W_{\tau_{p(s)}}(\tau_s-1)}{D^\delta(\tau_s-1)} 
\quad \prod_{ {s \notin \{\tau_u\colon u \in V(\bt)\}} \atop {s>\lceil na_\emptyset \rceil} } 
\Big(1- \sum_{ {\omega \in V(\bt):} \atop {\tau_{\omega}<s} }
\frac{W_{\tau_\omega}(s-1)}{D^\delta(s-1)}\Big),
\end{equation}
where $p(s)$ is the parent of $s$ in $\bt$ and 
	\[
	D^\delta(u) = u(u+1)(1+\tfrac12\delta)
	\]
is the total weight at time \(u\). The first product represents the probability for the presence of the edges in $\bt$, while the second product represents the probability for the absence of any further edges in $\bt$. In Steps 2--4 we will compute $\pr(\bar\bbT^r_{o_n} \ch{=} \bar\bt)$ in terms of the marks. 

\medskip\noindent
{\bf 2.} The first product in \eqref{eqn:product} equals, where we recall that $d(u,\vt)$ is the degree of vertex $u$ in $\vt$,
\begin{equation}
\label{eqn:1stprod}
\prod_{u \in B_{r-1}(\emptyset)} \prod_{k=1}^{d(u,\vt)} \frac{W_{\tau_u}(\tau_{uk}-1)}{D^\delta(\tau_{uk}-1)},  
\end{equation} 
where $B_\ell(\emptyset)$ is the ball of radius $\ell$ around $\emptyset$, $d(u,\vt)$ is the number of offspring of vertex $u$, and $uk$ is the $k$-th offspring of vertex $u$ in the Ulam-Harris ordering. 

\input{discrete_tree}

\medskip\noindent
Using that $W_{\tau_u}(\tau_{uk}-1)=\sum_{m=1}^k (\tau_{um} - \tau_{u(m-1)})\,(m+\delta)$, we see that the last product in \eqref{eqn:1stprod} can be written out as

	\eqn{
	\prod_{u \in B_{r-1}(\emptyset)}I(u),
	}
where	
\begin{equation}
\label{eqn:I}
I(u) = (1+\tfrac12\delta)^{-d(u,\vt)} \prod_{k=1}^{d(u,\vt)} 
\frac{\sum_{m=1}^k (\tau_{um} - \tau_{u(m-1)})\,(m+\delta)}{(\tau_{uk}-1)\tau_{uk}}.
\end{equation}
The numerator of $I(u)$ equals, via telescoping and with the convention $\tau_{u0} = \tau_u$,
\[
\begin{aligned}
\sum_{m=1}^k m\tau_{um} - \sum_{m=0}^{k-1} (m+1)\tau_{um} + (\tau_{uk}-\tau_u)\delta
&= \sum_{m=0}^k (\tau_{uk}-\tau_{um}) + (\tau_{uk}-\tau_u)\delta\\
&= \sum_{m=0}^k (\lceil na_{uk} \rceil - \lceil na_{um} \rceil) + (\lceil na_{uk} \rceil - \lceil na_u \rceil)\delta\\
&= n \left[\sum_{m=0}^k (a_{uk}-a_{um}) + (a_{uk}-a_u)\delta\right] \big(1+O(n^{-1})\big).
\end{aligned}
\]
The denominator of $I(u)$ equals
\[
\lceil na_{uk} \rceil (\lceil na_{uk} \rceil -1) = n^2 a^2_{uk} (1+O(n^{-1})).
\] 
Substitution into \eqref{eqn:I} yields
\[
I(u) = (1+\tfrac12\delta)^{-d(u,\vt)}\, n^{-d(u,\vt)}\, \big(1+O(n^{-1})\big)^{d(u,\vt)} \prod_{k=1}^{d(u,\vt)} 
\frac{\sum_{m=0}^k (a_{uk}-a_{um}) + (a_{uk}-a_u)\delta}{a^2_{uk}}.
\]
Substitution of this expression into \eqref{eqn:1stprod} gives
\begin{equation}
\label{eqn:factor1} 
(1+\tfrac12\delta)^{-|E(\bt)|}\, n^{-|E(\bt)|}\,\big(1+O(n^{-1})\big)^{|E(\bt)|} 
\prod_{u \in B_{r-1}(\emptyset)} \prod_{k=1}^{d(u,\vt)}
\frac{\sum_{m=0}^k (a_{uk}-a_{um}) + (a_{uk}-a_u)\delta}{a^2_{uk}}.
\end{equation}
 
\medskip\noindent
{\bf 3.} Since we investigate the {\em forward} neighborhood of the root, the second product in \eqref{eqn:product} is restricted to $s>\lceil na_\emptyset \rceil$, and thus equals 
\begin{equation}
\label{eqn:2ndprod}
\begin{aligned}
&\prod_{ {s \notin \{\tau_u\colon u \in V(\bt)\}} \atop {s>\lceil na_\emptyset \rceil} }
\exp\left[-\sum_{ {\omega \in V(\bt):} \atop {\tau_\omega<s} }
\frac{W_{\tau_\omega}(s-1)}{s(s-1)(1+\tfrac12\delta)} + O(s^{-2})\right]\\
&\qquad = \exp\left[-(1+\tfrac12\delta)^{-1} \sum_{ {s \notin \{\tau_u\colon u \in V(\bt)\}} \atop {s>\lceil na_\emptyset \rceil} } 
\left\{\sum_{ {\omega \in V(\bt):} \atop {\tau_\omega<s}  } \frac{W_{\tau_\omega}(s-1)}{s(s-1)} + O(s^{-2})\right\}\right]\\
&\qquad = \exp\left[-(1+\tfrac12\delta)^{-1} \sum_{s>\lceil na_\emptyset \rceil} 
\sum_{ {\omega \in V(\bt):} \atop {\tau_\omega<s}  } \frac{W_{\tau_\omega}(s-1)}{s(s-1)} \right.\\
&\qquad\qquad\qquad\qquad \left.+ (1+\tfrac12\delta)^{-1} \sum_{s\in\left\{ \tau_{u}: u\in V(\bt) \right\}}\sum_{ {\omega \in V(\bt):} \atop {\tau_\omega<s}  } \frac{W_{\tau_\omega}(s-1)}{s(s-1)} + O((na_\emptyset)^{-1}) \right]\\
&\qquad=\exp\left[-(1+\tfrac12\delta)^{-1} \sum_{s>\lceil na_\emptyset \rceil} 
\sum_{ {\omega \in V(\bt):} \atop {\tau_\omega<s}  } \frac{W_{\tau_\omega}(s-1)}{s(s-1)} + O((na_\emptyset)^{-1}) \right],
\end{aligned}
\end{equation}
where, in the  second equality, we remove the constraint that $s \notin \{\tau_u\colon u \in V(\bt)\}$, thereby adding only finitely many terms, and in the third inequality, we use that  $\sum_{\omega \in V(\bt)\colon\, \tau_\omega<s} W_{\tau_\omega}(s-1) \leq s |V(\bt)|^2(1+\delta) = O(s)$. The double sum in the exponent equals
\begin{equation}
\label{eqn:II}
\begin{aligned}
II &= \sum_{u \in B_{r-1}(\emptyset)} 
\Bigg[\sum_{k=1}^{d(u,\vt)}  \sum_{s=\tau_{u(k-1)}+1}^{\tau_{uk}}  
\frac{S_{u(k-1)} + (s-\tau_{u(k-1)})(k+\delta)}{s(s-1)}\\
&\qquad \qquad \qquad \qquad + \sum_{s=\tau_{u d(u,\vt)}+1}^n  
\frac{S_{ud(u,\vt)} + (s-\tau_{u d(u,\vt)})(d(u,\vt)+1+\delta)}{s(s-1)}\Bigg],
\end{aligned}
\end{equation}
where we abbreviate 
\eqn{
\label{Auk-def}
S_{uk} = \sum_{m=1}^k (\tau_m-\tau_{m-1})(m+\delta).
}
Since $\tfrac{1}{s(s-1)} = \tfrac{1}{s-1}-\tfrac{1}{s}$, the sums over $s$ can be carried out, to give that the term between square brackets equals
\begin{equation}
\label{eqn:II1}
\begin{aligned}
&\sum_{k=1}^{d(u,\vt)} \left(S_{u(k-1)} - \tau_{u(k-1)} (k+\delta)\right) 
\left[\frac{1}{\tau_{u(k-1)}} - \frac{1}{\tau_{uk}}\right]\\
&\qquad + \left(S_{ud(u,\vt)} - \tau_{u d(u,\vt)} (d(u,\vt)+1+\delta)\right) 
\left[\frac{1}{\tau_{u d(u,\vt)}} - \frac{1}{n}\right]\\
&\qquad + \sum_{k=1}^{d(u,\vt)} (k+\delta) \left[\log (\tau_{uk}-1) - \log \tau_{u(k-1)} + O(n^{-1}))\right]\\
&\qquad \qquad \qquad + (d(u,\vt)+1+\delta) \left[\log(n-1) - \log\tau_{u d(u,\vt)} + O(n^{-1})) \right].
\end{aligned}
\end{equation}
By further telescoping and \eqref{Auk-def}, we have
\[
\begin{aligned}
S_{u(k-1)} - \tau_{u(k-1)} (k+\delta) 
&= -  \sum_{m=0}^{k-1} \tau_{um} - \tau_u \delta,\\
S_{ud(u,\vt)} - \tau_{u d(u,\vt)} (d(u,\vt)+1+\delta) 
&= - \sum_{m=0}^{d(u,\vt)} \tau_{um} - \tau_u \delta.
\end{aligned}  
\]
With the convention $\tau_{u(d(u,\vt)+1)} = n$, the first two terms in \eqref{eqn:II1} simplify to
\[
\begin{aligned}
&- \sum_{k=1}^{d(u,\vt)} \left(\sum_{m=0}^{k-1} \tau_{um} + \tau_u \delta\right) 
\left[\frac{1}{\tau_{u(k-1)}} - \frac{1}{\tau_{uk}}\right]
-  \left(\sum_{m=0}^{d(u,\vt)} \tau_{um} + \tau_u \delta\right) 
\left[\frac{1}{\tau_{u d(u,\vt)}} - \frac{1}{n}\right]\\
&\qquad = \tau_u \delta\left[\frac{1}{n}-\frac{1}{\tau_u}\right]
+ \sum_{k=1}^{d(u,\vt)} \sum_{m=0}^{k-1} \tau_{um} \left[\frac{1}{\tau_{uk}} - \frac{1}{\tau_{u(k-1)}}\right]
- \left[\frac{1}{\tau_{u d(u,\vt)}} -\frac{1}{n}\right] \sum_{m=0}^{d(u,\vt)} \tau_{um}\\
&\qquad = \tau_u \delta\left[\frac{1}{n}-\frac{1}{\tau_u}\right]
+ \sum_{k=1}^{d(u,\vt)+1} \sum_{m=0}^{k-1} \tau_{um} \left[\frac{1}{\tau_{uk}} - \frac{1}{\tau_{u(k-1)}}\right]\\
&\qquad = \tau_u \delta\left[\frac{1}{n}-\frac{1}{\tau_u}\right]
+ \sum_{m=0}^{d(u,\vt)} \tau_{um} \left[\frac{1}{n} - \frac{1}{\tau_{um}}\right]
\end{aligned}
\]
and the last two terms in \eqref{eqn:II1} simplify to
\[
- \left[\delta \log \left(\frac{\tau_u}{n}\right) + \sum_{k=0}^{d(u,\vt)} \log \left(\frac{\tau_{uk}}{n}\right)\right] + O(n^{-1}).
\]
Collecting terms from $II$ and inserting $\tau_{um} = \lceil n a_{um} \rceil = n a_{um} (1+O(n^{-1}))$ into the sums, we get for \eqref{eqn:2ndprod} the expression
\eqan{
\label{eqn:factor2}
&\exp\Bigg[-(1+\tfrac12\delta)^{-1}\, \big(1+O(n^{-1})\big)\\ 
&\qquad \qquad \times \sum_{u \in B_{r-1}(\emptyset)} 
\Bigg(\delta a_u\left(1-\frac{1}{a_u}\right) + \sum_{m=0}^{d(u,\vt)} a_{um} \left(1-\frac{1}{a_{um}}\right)
- \left[\delta \log a_u + \sum_{k=0}^{d(u,\vt)} \log a_{uk}\right] \Bigg)\Bigg].\nonumber
}

\medskip\noindent
{\bf 4.} 
Combining \eqref{eqn:product}, \eqref{eqn:factor1} and \eqref{eqn:factor2}, we obtain
\begin{equation}
\label{eqn:discrrepr}
\begin{aligned}
&\lim_{n\to\infty} n^{|V(\bt)|} \pr(\bar\bbT^r_{o_n} = \bar\bt)
= \prod_{u \in B_{r-1}(\emptyset)} \prod_{k=1}^{d(u,\vt)}
\frac{\delta(a_{uk}-a_u)+\sum_{m=0}^k (a_{uk}-a_{um})}{(1+\tfrac12\delta)\, a^2_{uk}}\\
&\quad \times \exp\Bigg[-(1+\tfrac12\delta)^{-1}
\sum_{u \in B_{r-1}(\emptyset)} 
\Bigg(\delta\big[(a_u-1)-\log a_u\big] + \sum_{m=0}^{d(u,\vt)} \big[(a_{um}-1) - \log a_{um}\big]\Bigg)\Bigg].
\end{aligned}
\end{equation}
In Steps 5--6 we will compute the Radon-Nikodym derivative of $\pr(\bar\bbT^r_\emptyset = \bar\bt)$ with respect to the marks and show, via a change of time scale, that the latter matches the right-hand side of \eqref{eqn:discrrepr}. After summing out, respectively, integrating out the marks, we get the claim in \eqref{eqn:limmatch}. 

\medskip\noindent
{\bf 5.}
Define
\[
\pr(\bar\bbT^r_\emptyset = \bar\bt) = \frac{\pr\big(\bbT^r_\emptyset = \bt, 
A_\omega \in \dd a_\omega\,\,\forall\,\omega \in V(\bt)\big)}{\prod_{\omega \in V(\bt)} \dd a_\omega}
\]
with $A_\omega$ the mark of the vertex in $\bbT^r_\emptyset$ corresponding to vertex $\omega$ in $\bt$. Write
\[
\begin{aligned}
&\pr\big(\bbT^r_\emptyset = \bt, A_\omega \in \dd a_\omega\,\,\forall\,\omega \in V(\bt)\big)\\
&\hspace{2cm}= \dd a_{\emptyset} \prod_{u \in B_{r-1}(\emptyset)} 
\pr\big(\partial B_1(u) = \partial B_1(\omega_u),
A_\zeta \in \dd a_\zeta\,\,\forall\,\zeta \in \partial B_1(u)\mid A_{u}=a_{u}\big),
\end{aligned}
\]
with $\omega_u$ the vertex in $\bbT^r_\emptyset$ corresponding to vertex $u\in V(\bt)$. We have $\pr(A_{\emptyset} \in \dd a_{\emptyset}) = \dd a_{\emptyset}$ and
\[
\begin{aligned}
&\pr\big(\partial B_1(u) = \partial B_1(\omega_u), A_\zeta \in \dd a_\zeta\,\,\forall\,\zeta \in B_1(u) \mid A_u = a_u\big)\\
&= \left[\,\prod_{k=1}^{d(u,\vt)} \pr\big(A_{uk} \in \dd a_{uk} \mid A_{um} = a_{um} \,\,\forall\,m \in [k-1],
A_u = a_u\big)\right] \pr\big(A_{u(d(u,\vt)+1)} > 1 \mid \mathcal{E}_{u d(u,\vt)}\big)\\
&= \left[\,\prod_{k=1}^{d(u,\vt)} 
\pr\big(\sigma_{uk} \in \log (\dd a_{uk}) \mid \mathcal{E}_{u(k-1)}\big)\right]
\pr(\sigma_{u d(u,\vt)+1)} > 0 \mid \mathcal{E}_{u d(u,\vt)}),
\end{aligned}
\]
where we abbreviate $\sigma_{\omega} = \log A_{\omega}$ and $\mathcal{E}_{u(k-1)} = \{\sigma_{um} = \log a_{um} \,\,\forall \, m \in [k-1], \sigma_u = \log a_u\}$. By the definition of the {\tt sin}-tree,
\[
\begin{array}{llll}
\pr\big(\sigma_{uk} > s \mid \mathcal{E}_{u(k-1)}\big) &=& \eee^{-\Lambda_{u(k-1)}(s)},
&s \in (\sigma_{u(k-1)},0),\\[0.2cm]
\pr\big(\sigma_{u d(u,\vt)+1)} > s \mid \mathcal{E}_{u d(u,\vt)}\big) &=& \eee^{-\Lambda_{u d(u,\vt)}(s)},
&s \in (\sigma_{u d(u,\vt)},0),
\end{array}
\]
where, by \eqref{eqn:h01-hazard}--\eqref{eqn:h-gen-hazard},
\[
\begin{aligned}
\Lambda_{u(k-1)}(s) 
&= \int_0^{s-\sigma_{u(k-1)}} \dd x\, 
\left[
\frac{1+\delta}{1+\tfrac12\delta} \left(1-\eee^{-(\sigma_{u(k-1)}-\sigma_u+x)}\right)
+ \frac{1}{1+\tfrac12\delta} \sum_{m=1}^{k-1} \left(1-\eee^{-(\sigma_{u(k-1)}-\sigma_{um}+x)}\right)
\right]\\
&= \int_{\sigma_{u(k-1)}}^s  \dd y\, \lambda_{u(k-1)}(y),
\end{aligned}
\]
with
\[
\lambda_{u(k-1)}(y) = 
\frac{1+\delta}{1+\tfrac12\delta} \left(1-\eee^{-y+\sigma_u}\right)
+ \frac{1}{1+\tfrac12\delta} \sum_{m=1}^{k-1} \left(1-\eee^{-y+\sigma_{um}}\right),
\]
and a similar expression for $\lambda_{u d(u,\vt)}(s)$. Therefore,
\[
\begin{array}{lll}
\pr\big(\sigma_{uk} \in \log (\dd a_{uk}) \mid \mathcal{E}_{u(k-1)}\big)
&=& \lambda_{u(k-1)}(\log a_{uk})\,\eee^{-\Lambda_{u(k-1)}(\log a_{uk})}\,\log(\dd a_{uk}),\\[0.2cm]
\pr(\sigma_{u(d(u,\vt)+1)} > 0 \mid \mathcal{E}_{u d(u,\vt)})
&=& \eee^{-\Lambda_{u d(u,\vt)}(0)}.
\end{array}
\]
Collecting terms, we arrive at
\begin{equation}
\label{eqn:contrepr}
\begin{aligned}
\pr(\bar\bbT^r_\emptyset = \bar\bt) 
&=  \prod_{u \in B_{r-1}(\emptyset)} \left[\prod_{k=1}^{d(u,\vt)} \frac{1}{a_{uk}}\,
\lambda_{u(k-1)}(\log a_{uk})\,\eee^{-\Lambda_{u(k-1)}(\log a_{uk})}\right]\,\eee^{-\Lambda_{u d(u,\vt)}(0)}\\
&=  \prod_{u \in B_{r-1}(\emptyset)} \prod_{k=1}^{d(u,\vt)} \frac{1}{a_{uk}}\,
\lambda_{u(k-1)}(\log a_{uk})\\
&\qquad \times \exp\left[- \sum_{\ell=0}^{r-1} \sum_{u \in \partial B_\ell(\emptyset)} \left(\sum_{k=1}^{d(u,\vt)} 
\Lambda_{u(k-1)}(\log a_{uk}) + \Lambda_{u d(u,\vt)}(0)\right) \right].
\end{aligned}\end{equation}

\medskip\noindent
{\bf 6.} 
Finally, we show that the expression in \eqref{eqn:contrepr} is the same as the one in \eqref{eqn:discrrepr}. Indeed,
with $a_{u0} = a_u$,
\[
\begin{aligned}
\lambda_{u(k-1)}(\log a_{uk}) &= \frac{1}{1+\tfrac12\delta}
\left[(1+\delta) \left(1-\frac{a_u}{a_{uk}}\right) + \sum_{m=1}^{k-1} \left(1-\frac{a_{um}}{a_{uk}}\right)\right]\\
&= \frac{1}{(1+\tfrac12\delta)\,a_{uk}} \left[\delta (a_{uk}-a_u) + \sum_{m=0}^k (a_{uk}-a_{um})\right],
\end{aligned}
\]
which shows that the two product terms match. Moreover,
\[
\begin{aligned}
\Lambda_{u(k-1)}(\log a_{uk}) 
&= \frac{1}{1+\tfrac12\delta} \Bigg[(1+\delta)\left\{\log\left(\frac{a_{uk}}{a_{u(k-1)}}\right) 
\ch{-} a_u\left(\frac{1}{a_{u(k-1)}} - \frac{1}{a_{uk}}\right)\right\}\\
&\qquad\qquad \qquad + \sum_{m=1}^{k-1} \left\{\log\left(\frac{a_{uk}}{a_{u(k-1)}}\right) 
\ch{-} a_{um} \left(\frac{1}{a_{u(k-1)}} - \frac{1}{a_{uk}}\right)\right\}\Bigg],
\end{aligned}
\]
and a similar expression for $\Lambda_{u d(u,\vt)}(0)$. Hence, by telescoping,
\[
\begin{aligned}
&\sum_{k=1}^{d(u,\vt)} \Lambda_{u(k-1)}(\log a_{uk}) + \Lambda_{u d(u,\vt)}(0)\\
&= \frac{1}{1+\tfrac12\delta} \Bigg[\delta\left\{\log\left(\frac{a_{u d(u,\vt)}}{a_u}\right)
-a_u\left(\frac{1}{a_u} - \frac{1}{a_{u d(u,\vt)}}\right)
+ \log\left(\frac{1}{a_{u d(u,\vt)}}\right) - a_{u}\left(\frac{1}{a_{u d(u,\vt)}} - 1\right)\right\}\\
&\qquad \qquad \qquad + \sum_{k=1}^{d(u,\vt)} \sum_{m=0}^{k-1} \left\{\log\left(\frac{a_{uk}}{a_{u(k-1)}}\right) 
+ a_{um} \left(\frac{1}{a_{u(k-1)}} - \frac{1}{a_{uk}}\right)\right\}\\
&\qquad \qquad \qquad + \sum_{m=0}^{d(u,\vt)} \left\{\log\left(\frac{1}{a_{u d(u,\vt)}}\right) 
- a_{um}\left(\frac{1}{a_{u d(u,\vt)}}-1\right)\right\} 
\Bigg]\\
&= \frac{1}{1+\tfrac12\delta} \Bigg[\delta \big[(a_u-1) - \log a_u\big]
+ \sum_{m=0}^{d(u,\vt)} \big[(a_{um}-1) - \log a_{um}\big]\Bigg], 
\end{aligned}
\]
which shows that the two terms in the exponents in \eqref{eqn:discrrepr} and \eqref{eqn:contrepr} match as well.
\end{proof}
 

\subsection{Tail exponents: Proof of Theorem \ref{thm:tail}}
\label{sec:tail-exp}

Recall Proposition \ref{prop:edge-equiv-bp}, which characterizes the limit degree $D_\Sr$ in terms of the total size of the edge branching process at a random time $T_1$, i.e., $|\BP_{\ve}(T_1)|$. We will use this characterization to quantify the behavior of the tail exponents.

We start with the lower bound. The parameter $\lambda$ in~\eqref{eqn:malthus-edge} is the so-called Malthusian rate of growth of the edge branching process $\BP^\circ_{\ve}$, which under the ``$\hat\mvzeta\log^+(\hat\mvzeta)$'' condition for continuous-time branching processes implies $\ee^{-\lambda t}|\BP^{\circ}_{\ve}(t)| \stackrel{a.s., L^1}{\longrightarrow} W^{\circ}_{\ve}$ (see~\cite{jagers-nerman-1,jagers-nerman-2,jagers-ctbp-book}, where the limiting random variable is finite and strictly positive). In particular, $\sup_{t \ge 0}\ee^{-\lambda t}\E[|\BP^{\circ}_{\ve}(t)|]< \infty$. For $i \ge 0$, let $W^{\circ}_i$ denote the almost sure limit of $\eee^{-\lambda t}|\BP^{\circ}_{\ve,i}(t)|$ as $t \to \infty$. Recall that the point process $\chi$ has rate $r_{\ve}^{\sss(1)}(\cdot) $ as in \eqref{eqn:rdel-one}, which satisfies $r_{\ve}^{\sss(1)}(t) = (1+\delta) r_{\ve}(t)$. Hence, using the definition of $\lambda$, we get
\begin{equation*}
\E\left(\sum_{i=0}^{\infty} \ee^{-\gl \theta_i}\right) = (1+\delta) \int_0^\infty \ee^{-\lambda t}r_{\ve}(t)\, \dd t 
= (1+\delta) \int_0^\infty \ee^{-\lambda t} \mu_{\mvzeta_{\ve}}(\dd t) = (1+\delta) < \infty.
\end{equation*}

Writing $W_{\ve} := \sum_{i=0}^{\infty} \ee^{-\lambda \theta_i}W^{\circ}_i$, we conclude from the above observations that $\ee^{-\lambda t}|\BP_{\Sr, \ve}(t)| \stackrel{a.s.}{\rightarrow} W_{\ve}$ as $t \to \infty$. Moreover, $\E[W_{\ve}] < \infty$ gives, in particular, the (almost sure) finiteness of $W_{\ve}$. Further,
\begin{align}
\label{bound-difference-to-martingale}
\E\left(\Big| \ee^{-\gl t}|\BP_{\Sr, \ve}(t)| - W_{\ve}\Big|\right) &\le \E\left(\sum_{i=0}^\infty \ee^{-\gl \theta_i} \Big| 
\ee^{-\gl (t- \theta_i)}|\BP^{\circ}_{\ve,i}(t - \theta_i)| - W^{\circ}_i\Big|\right)\\
&\qquad + \E\left(\sum_{i=\chi(t)}^\infty \ee^{-\gl \theta_i} W^{\circ}_i\right).
\end{align}
The convergence to zero of the right-hand side of \eqref{bound-difference-to-martingale} follows from the dominated convergence theorem, and the independence of $\chi$ and $(\BP^{\circ}_{\ve,i})_{i \in \mathbb{N}_0}$. Thus, there exist $\delta, t_0, \eta >0$, such that
\begin{equation}
\label{eqn:248}
 \pr(\eee^{-\lambda t}|\BP_{\ve}(t)| > \delta ) \geq \eta \qquad \forall~t\geq t_0.
\end{equation}
Hence, for $k \geq \delta \eee^{\lambda t_0}$ and $C =\eta \delta^{1/\lambda}$,
\[
\pr(D_{\Sr} \geq k) = \int_0^\infty \eee^{-t} \pr(|\BP_{\ve}(t)|\geq k)\, \dd t 
\geq \int_{\frac{\log (k/\delta)}{\lambda}}^{\infty} \eee^{-t} \pr(|\BP_{\ve}(t)|\geq k)\, \dd t \geq \frac{C}{k^{1/\lambda}}.   
\]
To prove the upper bound, once again the moment bound in Proposition~\ref{prop:zeta-bounds} and the Markov inequality give that, for any $x>0$ and $t\geq  0$,
\[
\pr(|\BP_{\ve}(t)| > x) \leq \frac{H_l}{[\eee^{-\lambda t} x]^l}.
\]
Thus,
\begin{align*}
\pr(D_{\Sr} \geq k) &=\int_0^\infty \eee^{-t} \pr(|\BP_{\ve}(t)|\geq k)\,\dd t 
\leq \int_0^{\frac{\log{k}}{\lambda}} \frac{H_l}{[\eee^{-\lambda t} x]^l} \eee^{-t}\, \dd t 
 + \int_{\frac{\log{k}}{\lambda}}^\infty \eee^{-t}\, \dd t \\
 &\le \left[\frac{H_l}{\lambda l -1} + 1\right] \frac{1}{k^{1/\lambda}},
\end{align*}
which settles the claim. 


\subsection{Root degree behavior: Proof of Theorem~\ref{degree_growth}} 
\label{sec:max-deg-proof}

We will focus on the growth of the root degree, i.e., either $d(0,\cT_n)$ or $d(1,\cT_n)$. The growth of any fixed vertex can be treated similarly. 

Consider the evolution of the degree of the root using the weight of the vertex $\theta(v_1,n)$ as in \eqref{eq:weightchoice}. Suppose that at time $n$, the degree of vertex $v_1$ is $m$, and suppose that these additions happened at times $t_1\equiv=1, t_2, \dots, t_m$ (see Figure \ref{fig:times}). 

\begin{figure}[htbp]
	\centering
		\begin{tikzpicture}[x=1.8cm,y=1cm, line cap=round, line join=round]

		\draw[thick] (0,0) -- (2.9,0);
		\draw[thick] (2.9,0) -- (5.5,0);
		\draw[thick] (6.8,0) -- (8.1,0);

		\draw[thick,dotted] (5.5,0) -- (6.8,0);

		\draw[thick] (0,-0.28) -- (0,0.28);
		\draw[thick] (2,-0.38) -- (2,0.38);
		\draw[thick] (4.3,-0.38) -- (4.3,0.38);
		\draw[thick] (6.8,-0.38) -- (6.8,0.38);
		\draw[thick] (8.1,-0.30) -- (8.1,0.30);

		\node[below] at (0,-0.45) {$t_1=1$};
		\node[below] at (2,-0.45) {$t_2$};
		\node[below] at (4.3,-0.45) {$t_3$};
		\node[below] at (6.8,-0.45) {$t_m$};
		\node[below] at (8.1,-0.45) {$n$};

		\node[below] at (1.0,-0.15) {$1$};
		\node[below] at (3.15,-0.15) {$2$};
		\node[below] at (7.45,-0.15) {$m$};

		\end{tikzpicture}
	\caption{Times of growth of degree of vertex $v_1$.}
	\label{fig:times}
\end{figure}

Note that the weight used for the probability of connection when the network transitions from $n \leadsto n+1$ can be rewritten as 
\eqan{
\frac{\theta(v_1, n)}{(1+\frac{1}{2}\delta)n(n+1)} 
&= \frac{(n-v+1)n-\sum_{i=1}^n 
\sum_{j=v}^n \indic{s<t_i}}{(1+\frac{1}{2}\delta)n(n+1)}=\frac{n(m+\delta) 
- (t_1+t_2+\cdots+t_m)}{(1+\frac{1}{2}\delta)n(n+1)}.
}
Hence, for keeping track of the evolution of this vertex, it is easier to move to continuous time and consider the following point process driven by a Yule process:

\begin{defn}[Degree evolution in continuous time]
	Let $(Y(t))_{t \ge 0}$ be a standard rate-$1$ Yule process started with $Y(0) = 2$ individuals. Suppose that new individuals are marked according to the following rule. Let $D(t)$ be the number of marked individuals at time $t$, with $D(0) = 1$. Let $T_1, T_2, \dots, T_{D(t)}$ be the times when these marks occurred. Define $W(t) = \sum_{j=1}^{D(t)} Y(T_j)$. When a new individual is born at time $t$ (which occurs at rate $Y(t)$), then it is marked with probability
	\begin{equation*}
	    p_\delta(t) = \gamma \left( \frac{D(t) + \delta}{Y(t)+1} - \frac{W(t)}{Y(t)(Y(t)+1)} \right),
	\end{equation*}
where $\delta>-1$ is a constant, and $\gamma = \frac{1}{1 + \delta/2} = \frac{2}{2+\delta}$.
\end{defn}

Abbreviate $\tau_n = \inf\set{t \geq 0\colon\, Y(t) = n}$. It is easy to check that $(d(v_{1},n))_{n\geq 1} \stackrel{d}{=}  (D(\tau_n))_{n\geq 1}$. To simplify the algebra, we use $Y(t)$ instead of $Y(t)+1$ in the denominator, i.e.,
\begin{equation}
	    p_\delta(t) = \gamma \left( \frac{D(t) + \delta}{Y(t)} - \frac{W(t)}{Y(t)^2} \right).
\end{equation}
This does not change the asymptotics, but makes the computations easier to parse. The following proposition, together with the fact that for Yule process dynamics $\tau_n - \log{n} \convas W$ for a finite random variable $W$, completes the proof of the theorem:

\begin{prop}[Limit of $D(t)$]
For any $\delta >-1$, there exists a non-negative random variable $X_\delta$ with $\E[X_\delta] >0$ such that, almost surely and in $L^1$, as $t \to \infty$,
\begin{equation}
    \eee^{-t/\phi(\delta)} D(t) \longrightarrow X_\delta,
\end{equation}
with $\phi(\delta)$ as in \eqref{phidef}, so that
\begin{equation}
    \frac{1}{\phi(\delta)} = \frac{1}{2} \left( \sqrt{1 + 4\gamma} - 1 \right). 
\end{equation}
\end{prop}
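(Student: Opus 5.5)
The plan is to find a two-dimensional linear system for the pair $(D(t), W(t)/Y(t))$ and extract a martingale from its leading eigenvector. Set $V(t) = \eee^{-t}W(t)$ and $U(t) = W(t)/Y(t)$. Since $\eee^{-t}Y(t)\to \xi$ almost surely (a Gamma variable), $U(t) \approx \xi^{-1}\,\sum_j Y(T_j)\eee^{-t}$ behaves like a discounted sum of past mark times.

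\textbf{Step 1: generators.} Between marks, $W$ is constant and $Y$ grows, so $U$ decreases at rate $-U$ per unit time in the Yule clock. Indeed, when $Y$ jumps by one, $U$ changes by about $-U/Y$, and jumps occur at rate $Y$. A mark at time $t$ adds $Y(t)$ to $W$, so $U$ jumps by $+1$.

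Marks occur at total rate $Y\,p_\delta = \gamma\big((D+\delta) - U\big)$. Writing $\mathcal{L}$ for the generator, we get, up to errors of order $O(D/Y)$,
\[
\mathcal{L}D = \gamma(D+\delta-U), \qquad \mathcal{L}U = -U + \gamma(D+\delta-U).
\]
So the vector $X=(D+\delta, U)$ has drift $AX$ with
\[
A=\begin{pmatrix}\gamma & -\gamma\\ \gamma & -1-\gamma\end{pmatrix}.
\]
Its characteristic polynomial is $\mu^2+\mu-\gamma=0$ (trace $-1$, determinant $-\gamma$). The largest root is $\mu_+=\tfrac12(\sqrt{1+4\gamma}-1)=1/\phi(\delta)$, which matches \eqref{eq:lambdaeta}.

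\textbf{Step 2: martingale.} Take a left eigenvector $\ell=(\ell_1,\ell_2)$ of $A$ for $\mu_+$, namely $\ell_1\gamma+\ell_2\gamma=\mu_+\ell_1$, i.e.\ $\ell_2/\ell_1 = \mu_+/\gamma - 1 = -\mu_+/(1+\mu_+)$. Then
\[
M(t)=\eee^{-\mu_+ t}\big[(D+\delta) - \tfrac{\mu_+}{1+\mu_+}U\big]
\]
is a martingale up to a drift of order $\eee^{-\mu_+t}D/Y$. Since $0\le U\le D$ (because $W\le DY$), we have $M \asymp \eee^{-\mu_+ t}D$ with comparable constants $\tfrac{1}{1+\mu_+}\le M/(\eee^{-\mu_+t}(D+\delta))\le 1$.

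The drift error is absolutely integrable, because $D/Y\lesssim \eee^{(\mu_+-1)t}$ with $\mu_+<1$. The paper's replacement of $Y+1$ by $Y$ is handled the same way.

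\textbf{Step 3: convergence.} I would bound the quadratic variation: jumps of $M$ have size $O(\eee^{-\mu_+t})$ and occur at rate $O(D)$. Hence $\E\langle M\rangle_\infty \lesssim \int \eee^{-2\mu_+t}\E D(t)\,\dd t<\infty$, using $\E D(t)\lesssim \eee^{\mu_+t}$, which itself follows from taking expectations in Step 1 via Gronwall. So $M$ is $L^2$-bounded and converges almost surely and in $L^2$.

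The second eigenvalue is $\mu_-=-1-\mu_+<0$, so the complementary combination $\eee^{-\mu_+ t}U$ relative to $M$ is governed by a contracting direction. Concretely, the component along the right eigenvector for $\mu_-$, multiplied by $\eee^{-\mu_+t}$, tends to $0$ almost surely by a similar $L^2$ argument. Hence $\eee^{-\mu_+t}D$ and $\eee^{-\mu_+t}U$ converge almost surely and in $L^1$ to multiples of $\lim M$. This gives $X_\delta$, with $\E X_\delta \ge c\,\E M(0)>0$ from the martingale property and $L^1$ convergence.

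\textbf{Main obstacle.} The hardest part is controlling the error terms rigorously: the mismatch between $U$ and its idealized dynamics, and ensuring the drift corrections $O(D/Y)$ are summable almost surely. This needs an a priori polynomial bound on $D$, which I would get from Step 1 and Gronwall. The contracting-direction argument in Step 3 is then a standard variation-of-constants estimate.
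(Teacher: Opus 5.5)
Up to the last step your argument is the paper's: the same drift matrix $A_\delta$ and projection onto the left eigenvector $(1,-\tfrac{\mu_+}{1+\mu_+})$ for $\mu_+=1/\phi(\delta)$. This is the paper's $\mathbf{v}$, since $\mu_+/\gamma=1/(1+\mu_+)$. You also use the same Gr\"onwall bound on $\E[D(t)]$, the same $L^2$ bound on the discounted martingale, and the same contracting $\mu_-$ direction. Shifting $D$ to $D+\delta$ to absorb the source term $S$ is a harmless simplification.

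The genuine gap is the positivity claim $\E[X_\delta]\ge c\,\E[M(0)]>0$ ``from the martingale property''. Your $M$ is not a martingale. With $R_0=U/(Y+1)\ge 0$, its drift is exactly $-\tfrac{\mu_+}{1+\mu_+}\eee^{-\mu_+t}R_0(t)\le 0$, so $M$ is a supermartingale and $L^1$ convergence yields
\[
\E[M_\infty]=M(0)-\tfrac{\mu_+}{1+\mu_+}\int_0^\infty \eee^{-\mu_+ s}\,\E[R_0(s)]\,\dd s\le M(0),
\]
which is the wrong direction. The correction is of order one (already $R_0(0)=1/3$), hence comparable to $M(0)=1+\delta-\tfrac{\mu_+}{1+\mu_+}$, using $U(0)=W(0)/Y(0)=1$. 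In fact $M(0)<0$ for $\delta$ close to $-1$, where $\mu_+\to 1$. For the same reason, your lower comparison $M\ge \tfrac{1}{1+\mu_+}\eee^{-\mu_+t}(D+\delta)$ holds only for $\delta\ge 0$.

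What is missing is a quantitative argument that the accumulated negative drift cannot consume $M(0)$. The paper gets this from the embedded chain at $Y=n$. There the remainder satisfies $v_2Z_{2,n}/(n(n+1))\ge -C X_n/(n(n+1))$ with $n$ deterministic, so it is a summable \emph{relative} perturbation. Hence $\E[X_{n+1}]\ge \E[X_n](1+\tfrac{\mu_+}{n}-\tfrac{C}{n(n+1)})$ and $\E[X_n]\ge c\,n^{\mu_+}$.

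In your continuous-time setup there is a one-line repair when $\delta\ge 0$. Since $R_0\le D/(Y+1)<1$, the expected total drift is strictly less than $\tfrac{\mu_+}{1+\mu_+}\cdot\tfrac{1}{\mu_+}=\tfrac{1}{1+\mu_+}$, while $M(0)=\tfrac{1}{1+\mu_+}+\delta$. Thus $\E[M_\infty]>\delta\ge 0$, and the pathwise bound $M(t)\le \eee^{-\mu_+t}(D(t)+\delta)$ gives $M_\infty\le X_\delta$. For $\delta<0$ neither this bound nor the paper's suffices, since the paper uses $\mathbf{v}^\top S=\delta\mu_+\ge 0$; that range needs an extra argument.

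Two minor points. First, in Step 3 the no-mark jumps occur at rate $Y$, not $O(D)$. They still add at most $\eee^{-2\mu_+t}U^2/Y\le \eee^{-2\mu_+t}D$ to the predictable quadratic variation, so your bound stands. Second, integrability of the drift follows from $0\le R_0\le 1$ alone. You do not need the pathwise estimate $D/Y\lesssim \eee^{(\mu_+-1)t}$, which you have not established.
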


\begin{proof}
We start by considering the evolution of the various processes in continuous time. The semi-martingale decomposition of the associated processes and their analysis will form the heart of the proof. Define the state vector $Z(t) = (Z_1(t), Z_2(t))^\top$, where 
\[
Z_1(t) = D(t), \qquad  Z_2(t) = \frac{W(t)}{Y(t)}.
\]
Due to the Yule process dynamics, jumps occur at rate $Y(t)$. A mark occurs with rate $\lambda_1(t) = Y(t)p_\delta(t) = \gamma D(t) - \gamma Z_2(t) + \gamma \delta$, causing jumps $\Delta D(t) = 1$ and $\Delta W(t) = Y(t)+1$. No mark occurs with rate $\lambda_2(t) = Y(t) - \lambda_1(t)$, causing jumps $\Delta D(t) = 0$ and $\Delta W(t) = 0$. Applying the infinitesimal generator $\mathcal{L}$ of the Markov jump process to $Z(t)$, we get
\begin{align}
     \mathcal{L} Z_1(t)&= \mathcal{L} D(t)= \gamma D(t) - \gamma Z_2(t) + \gamma \delta, \\
    \mathcal{L} Z_2(t) &= \gamma D(t) - (1+\gamma) Z_2(t) + \gamma \delta + R_0(t),
\end{align}
where the remainder term is 
\[
R_0(t) = \frac{W(t)}{Y(t)(Y(t)+1)}.
\] 
Note that $W(t) \le D(t)Y(t) \le Y(t)^2$ since marks are assigned to a subset of the population at or before time $t$. Thus, 
\begin{equation}
\label{eqn:det-bounds}
	Z_2(t) \le D(t) \le Y(t), \qquad R_0(t) \le \frac{Y(t)^2}{Y(t)^2} \le 1.
\end{equation}

Next, we consider the semimartingale decomposition of $Z(\cdot)$. By the Doob-Meyer decomposition, $dZ(t) = \mathcal{L}Z(t)dt + dM(t)$, yielding the matrix form
\begin{equation}
	\label{eqn:semi-mart-decomp}
    \dd Z(t) = A_\delta Z(t)\, \dd t + S\, \dd t + R(t)\, \dd t + \dd M(t),
\end{equation}
where $M(t) = (M_1(t), M_2(t))$ is the martingale term, and 
\begin{equation}
\label{eqn:matrix}
	A_\delta = \begin{pmatrix} \gamma 
	& -\gamma \\ \gamma & -(1+\gamma) \end{pmatrix}, 
	\qquad S = \begin{pmatrix} \gamma\delta \\ \gamma\delta \end{pmatrix}, 
	\qquad R(t) = \begin{pmatrix} 0 \\ R_0(t) \end{pmatrix}.
\end{equation}
The following lemma collects some salient facts about the matrix $A_\delta$:

\begin{lem}[Spectral properties of the drift matrix] 
\label{lem:spectral}
Let $\delta>-1$ and $\gamma = \frac{2}{2+\delta}$. Define the drift matrix $A_\delta$ as in \eqref{eqn:matrix}. Clearly, $A_\delta$ has two distinct real eigenvalues given by
\begin{equation}
    \lambda_+ = \frac{1}{\phi(\delta)} = \frac{1}{2}\left(\sqrt{1+4\gamma}-1\right)>0, \quad 
    \lambda_-  = \frac{1}{2}\left(-\sqrt{1+4\gamma}-1\right)<0.
\end{equation}
Furthermore, the associated left-eigenvectors $\vv^\top$ for $\lambda_+$, and $\uu^\top$ for $\lambda_-$, normalized such that their first coordinates are $1$, are given by
\begin{equation}
    \vv = \begin{pmatrix} 1 \\ \frac{1/\phi(\delta) - \gamma}{\gamma} \end{pmatrix}, 
    \quad \uu = \begin{pmatrix} 1 \\ \frac{\lambda_- - \gamma}{\gamma} \end{pmatrix}.
\end{equation}
Finally, for the principal left-eigenvector $\vv$, the second coordinate is strictly negative ($v_2 < 0$), and its magnitude is strictly bounded by the first coordinate ($|v_2| < v_1$). The matrix of eigenvectors is invertible.
\end{lem}

\begin{proof}[Proof of Lemma \ref{lem:spectral}]
	The various formulas for eigen-values and eigen-vectors can be easily checked. To show that $v_2 < 0$, observe that $1/\phi(\delta) < \gamma$ is equivalent to $\sqrt{1+4\gamma} - 1 < 2\gamma$, which holds since squaring $1+2\gamma$ gives $1+4\gamma+4\gamma^2 > 1+4\gamma$. Since $v_2$ is negative, its absolute value is
\begin{equation}
    |v_2| = \frac{\gamma - \phi(\delta)}{\gamma} = 1 - \frac{\phi(\delta)}{\gamma}.
\end{equation}
Since the principal eigenvalue is strictly positive and $\gamma > 0$, the ratio satisfies $1/(\phi(\delta)\gamma)> 0$. Therefore, $|v_2| = 1 -1/(\phi(\delta)\gamma) < 1$. However, $v_1 = 1$, and hence $|v_2| < v_1$.
\end{proof}

We continue the proof by projecting $M$ in one direction, using $\vv$ to obtain a scalar process. Define $X(t) = \vv^\top Z(t)$. Using the semi-martingale decomposition \eqref{eqn:semi-mart-decomp} and Lemma \ref{lem:spectral}, we get
\begin{equation}
	\label{eqn:semi-mart-x}
    \dd X(t) = (\phi(\delta))^{-1} X(t)\, \dd t + (\vv^\top S)\, \dd t + v_2 R_0(t)\, \dd t + \dd M_X(t),
\end{equation}
where $M_X(t) = \vv^\top M(t)$. Further note that, because $Z_2(t) \leq D(t)$, 
\begin{equation}
\label{eqn:bound-x}
	X(t) = v_1 D(t) - |v_2| Z_2(t) \geq \frac{1}{\phi(\delta)\gamma}D(t). 
\end{equation}
We use these inequalities to get a priori bounds on the expectations of $X$ and hence of $D$. Indeed, by \eqref{eqn:semi-mart-x}, 
\begin{equation}
    \frac{d}{dt} \E[X(t)] = (\phi(\delta))^{-1} \E[X(t)] + \vv^\top S + v_2 \E[R_0(t)].
\end{equation}
Since $R_0(t) \le 1$, the derivative is bounded above by $(\phi(\delta))^{-1} \E[X(t)] + K$, where $K = \vv^\top S + 1$. By Gr\"onwall's inequality,
\begin{equation}
    \mathbb{E}[X(t)] \le X(0)\, \ee^{t/\phi(\delta)} + {K}{\phi(\delta)} 
    \left( \ee^{t/\phi(\delta)} - 1 \right) \le C_1 \ee^{t/\phi(\delta)}.
\end{equation}
Thus, there exists a constant $C_2$ such that, for all $t \ge 0$,
\begin{equation}
	\label{eqn:apriori}
    \mathbb{E}[D(t)] \le C_2\, \ee^{t/\phi(\delta)}.
\end{equation}

Finally, we analyse the quadratic variation of the process $\ee^{-(\phi_{\delta})^{-1}t}X(t)$ and show that this is bounded for all $t$. Therefore, the corresponding martingale term is $L^2$-bounded and converges in $L^2$. Applying It\^o's lemma, we get that
\begin{equation}
    \ee^{-t/\phi(\delta)} X(t) = X(0) + \int_0^t \ee^{-s/\phi(\delta)} (\vv^\top S + v_2 R_0(s))\, \dd s + N(t),
\end{equation}
where $N(t) = \int_0^t \ee^{-s/\phi(\delta)} \dd M_X(s)$ is the discounted local martingale. We bound its predictable quadratic variation $\langle N \rangle_t$.  Using the normalized principal left-eigenvector $v = (1, v_2)^\top$ from Lemma \ref{lem:spectral}, we see that the jumps are given by $\Delta X(t) = \Delta D(t)  + v_2 \Delta Z_2(t)$. These are governed by two events:
\begin{enumeratea}
    \item 
    \textbf{Mark Event} (Rate $\lambda_1 \le \gamma D(t) $): Jumps are $\Delta D(t)  = 1$ and $\Delta Z_2(t)  = 1 - R_0(t) $. The scalar jump is $\Delta X(t)  = 1 + v_2(1 - R_0(t))$. Because $0 \le R_0(t) \le 1$ and $v_2 \in (-1, 0)$, the quantity $v_2(1-R_0(t))$ lies in the interval $[v_2, 0]$. Therefore, $0 < 1 + v_2 \le \Delta X(t) \le 1$. Consequently, $(\Delta X(t))^2 \le 1$. The contribution to the variation rate is strictly bounded by $\gamma D(t)$. 
    \item 
    \textbf{No-Mark Event} (Rate $\lambda_2 \le Y(t) $): Jumps are $\Delta D(t)  = 0$ and $\Delta Z_2(t)  = -R_0(t)$. The scalar jump is $\Delta X(t) = -v_2 R_0(t) $. Since $R_0(t) \le D(t)/Y(t)$, we have $(\Delta X(t))^2 = v_2^2 R_0(t)^2 \le v_2^2 \left(\frac{D(t)}{Y(t)}\right)^2$. The contribution to the variation rate is bounded by $Y(t) \cdot v_2^2 \frac{D(t)^2}{Y(t)^2} = v_2^2 \frac{D(t)^2}{Y(t)}$.
\end{enumeratea}

Summing these contributions, we obtain the differential bound 
	\[
	\dd\langle M_X \rangle_t \le \Big(\gamma D(t) + v_2^2 \frac{D(t)^2}{Y(t)}\Big)\, \dd t.
	\]
Since marks are a subset of the population, we have $D(t) \le Y(t)$, which allows us to deterministically bound the non-linear term as 
\[
\frac{D(t)^2}{Y(t)} \le D(t) \frac{Y(t)}{Y(t)} = D(t).
\]
Therefore, taking $K_2 = \gamma + v_2^2$, we have a strictly linear bound given by
\begin{equation}
    \dd\langle M_X \rangle_t \le K_2 D(t)\, \dd t.
\end{equation} 
Using the previous a priori bounds on the expectation of $D(\cdot)$, we can bound the expectation of the total predictable quadratic variation of the discounted martingale:
\begin{equation}
    \mathbb{E}[\langle N \rangle_\infty] = \mathbb{E}\left[\int_0^\infty \ee^{-2t/\phi(\delta)} 
    \dd\langle M_X \rangle_t \right] \le K_2 \int_0^\infty \ee^{-2t/\phi(\delta)} \mathbb{E}[D(t)]\, \dd t.
\end{equation}
Substituting our a priori bound \eqref{eqn:apriori}, we get
\begin{equation}
    \mathbb{E}[\langle N \rangle_\infty] \le K_2 C_2 \int_0^\infty \ee^{-t/\phi(\delta)} \dd t 
    = \frac{K_2 C_2}{\phi(\delta)} < \infty.
\end{equation}
Because $\phi(\delta) > 0$ for all $\delta >-1$, this integral is strictly finite. Hence, the martingale $N(t)$ is bounded in $L^2$, making it uniformly integrable and guaranteeing almost sure and $L^1$ convergence of $N(t)$ to a finite limit $N_\infty$. 

Next, we study the continuous drift integral 
\[
\int_0^t e^{-s/\phi(\delta)} (v^\top S + v_2 R_0(s)) ds.
\] 
While $R_0(s)$ is random, it satisfies $0 \le R_0(s) \le 1$ almost surely for all $s \ge 0$. Thus, for almost every sample path, the random integrand above is bounded in absolute value by the deterministic function $e^{-s/\phi(\delta)} (|v^\top S| + |v_2|)$. Since $\phi(\delta) \in (0,\infty)$, this bounding function is strictly integrable on $[0, \infty)$. Therefore, the random drift integral converges absolutely and almost surely to a finite limit as $t \to \infty$. By DCT this integral also converges in $L^1$. Since both the drift integral and the martingale $N(t)$ converge almost surely and in $L^1$, the discounted projection $\ee^{-t/\phi(\delta)} X(t)$ converges almost surely and in $L^1$ to a finite random variable $X_\infty$.

To conclude the proof, we must establish that the normalized counting process $\ee^{-\phi(\delta) t} D(t)$ converges to a limit $D_\infty$, and that this limit is positive with positive probability.

First, we prove convergence. By Lemma \ref{lem:spectral}, the drift matrix $A_\delta$ possesses a secondary left-eigenvector $u$ associated with the strictly negative eigenvalue $\lambda_-$. Defining the secondary projection $U(t) = \vu^\top Z(t)$, the same semimartingale machinery guarantees that $\mathbb{E}[U(t)]$ is bounded, and so
\[
\ee^{-t/\phi(\delta)} U(t) \xrightarrow{L^1} 0.
\] 
Since the eigenvectors $\vv$ and $\vu$ are linearly independent, $D(t)$ can be expressed as a fixed linear combination $D(t) = c_1 X(t) + c_2 U(t)$. Thus, we now get $\ee^{-t/\phi(\delta)} D(t)$ converges almost surely and in $L^1$ to a finite limit $D_\infty = c_1 X_\infty$.

Second, we prove that the expectation of this limit is strictly positive. By definition, $X(t) = D(t) - |v_2| Z_2(t)$. Because $Z_2(t) \ge 0$, we have the deterministic upper bound $X(t) \le D(t)$ for all $t \ge 0$. Multiplying by the discount factor and taking the limit $t \to \infty$ preserves this inequality, so that
\begin{equation}
    X_\infty \le D_\infty \quad \text{a.s.}
\end{equation}

To complete the proof it is enough to show that $\E[X_\infty]>0$. We analyze the exact transition dynamics of the discrete embedded chain $Z_n = (D_n, Z_{2,n})^\top$ evaluated exactly when the Yule population is $Y(t) = n$. Let $X_n = \vv^\top Z_n$ denote the corresponding discrete time version $X(\cdot)$. The following lemma completes the proof of Theorem~\ref{degree_growth}:

\begin{lem}[Lower bound on expectation]
\label{lem:lowerbound}
There exists $c_2>0$ such that $\E[X_n] \geq c_2 n^{1/\phi(\delta)}$.
\end{lem}

\begin{proof}
The probability of a mark at transition $n \to n+1$ is $p_n = \frac{\gamma D_n}{n} - \frac{\gamma Z_{2,n}}{n} + \frac{\gamma \delta}{n}$. The conditional expectations for the components of $Z_{n+1}$ are
\begin{align}
    \mathbb{E}[D_{n+1} \mid \mathcal{F}_n] 
    &= D_n + p_n, \\
    \mathbb{E}[Z_{2,n+1} \mid \mathcal{F}_n] 
    &= \mathbb{E}\left[ \frac{W_n + (n+1)I_{n+1}}{n+1} \mathrel{\Big|} \mathcal{F}_n \right] 
    = \frac{W_n}{n+1} + p_n = Z_{2,n} - \frac{Z_{2,n}}{n+1} + p_n.
\end{align}
We extract the exact $1/n$ drift to match the continuous generator $A_\delta$. Using the identity $-\frac{1}{n+1} = -\frac{1}{n} + \frac{1}{n(n+1)}$, we rewrite the second expectation as
\begin{equation}
    \mathbb{E}[Z_{2,n+1} \mid \mathcal{F}_n] = Z_{2,n} + p_n - \frac{Z_{2,n}}{n} + \frac{Z_{2,n}}{n(n+1)}.
\end{equation}
Writing this system in matrix form, utilizing $A_\delta$ and $S$, we get an exact algebraic expansion with no asymptotic error terms, given by
\begin{equation}
    \mathbb{E}[Z_{n+1} \mid \mathcal{F}_n] = Z_n + \frac{1}{n} A_\delta Z_n + \frac{1}{n} S 
    + \begin{pmatrix} 0 \\ \frac{Z_{2,n}}{n(n+1)} \end{pmatrix}.
\end{equation}
Projecting this system onto the principal left-eigenvector $v^\top$, we get the exact discrete drift for the scalar process $X_n = \vv^\top Z_n$, i.e.,
\begin{equation}
    \mathbb{E}[X_{n+1} \mid \mathcal{F}_n] = X_n \left( 1 + \frac{1}{\phi(\delta)n} \right) 
    + \frac{v^\top S}{n} + \frac{v_2 Z_{2,n}}{n(n+1)}.
\end{equation}
It remains to bound the remainder term $\frac{v_2 Z_{2,n}}{n(n+1)}$. Because $v_2 < 0$ and $Z_{2,n} \ge 0$, this term is negative. From the bound $Z_{2,n} \le D_n$, we established that $X_n \ge (1-|v_2|)D_n$, which implies $D_n \le \frac{X_n}{1-|v_2|}$. Thus,
\begin{equation}
    v_2 Z_{2,n} = -|v_2| Z_{2,n} \ge -|v_2| D_n \ge -\left(\frac{|v_2|}{1-|v_2|}\right) X_n.
\end{equation}
Let $C = \frac{|v_2|}{1-|v_2|} > 0$. Taking the unconditional expectation of the $X_n$ recurrence, and noting the source term $\vv^\top S \ge 0$, we obtain the strict difference inequality
\begin{equation}
    \mathbb{E}[X_{n+1}] \ge \mathbb{E}[X_n] \left( 1 + \frac{1}{\phi(\delta)n} - \frac{C}{n(n+1)} \right).
\end{equation}
For all sufficiently large $n$, the multiplier is strictly positive. Iterating this inequality from a starting index $N$ yields:
\begin{equation}
    \mathbb{E}[X_n] \ge \mathbb{E}[X_N] \prod_{k=N}^{n-1} \left( 1 + \frac{1}{\phi(\delta)k} - \frac{C}{k(k+1)} \right).
\end{equation}
Because the perturbation $\frac{C}{k(k+1)}$ is absolutely summable, standard analysis of infinite products guarantees that this product diverges at exactly the same polynomial rate as $\prod (1 + \frac{1}{\phi(\delta)k})$. Therefore there exists a constant $c_0 > 0$ such that, for all $n$,
\begin{equation}
    \mathbb{E}[X_n] \ge c_0 n^{1/\phi(\delta)}.
\end{equation}
This completes the proof of Lemma \ref{lem:lowerbound}.
\end{proof}
Lemma \ref{lem:lowerbound} completes the proof of Theorem~\ref{degree_growth}. 
\end{proof}


\section*{Acknowledgments}
SB was supported by the National Science Foundation (NSF) through grants DMS-2113662, DMS-2413928, DMS-2434559, and through NSF RTG grant DMS-2134107. FdH and RvdH were supported by the Netherlands Organisation for Scientific Research (NWO) through Gravitation-grant NETWORKS-024.002.003. RR was supported by the Office of Naval Research under the Vannevar Bush Faculty Fellowship N0014-21-1-2887. SB, FdH, RvdH and RR were supported by the NSF under grant DMS-1928930 while in residence at the Simons Laufer Mathematical Sciences Institute, Berkeley, California, USA during the Spring 2025 semester.

 
\bibliographystyle{chicago}

\bibliography{bibliofile.bib}


\end{document}

%% file: style.tex
\usepackage{chicago}
\usepackage{systeme}
\usepackage{mathrsfs,xfrac} 
\usepackage[colorlinks=true,linkcolor=blue,citecolor=blue,urlcolor=blue]{hyperref} 
\usepackage{amsmath,amssymb,amsthm,amsfonts,amsbsy,latexsym,dsfont,color} 
\usepackage[numeric,initials,nobysame]{amsrefs} 
\usepackage[foot]{amsaddr}

\usepackage[utf8]{inputenc}
\usepackage[english]{babel}
\usepackage{comment}

\usepackage{enumerate}

\newenvironment{enumeratea}{\begin{enumerate}[\upshape a)]\setlength{\itemsep}{1ex}}{\end{enumerate}}

\usepackage{paralist}

\numberwithin{equation}{section}

\newcommand{\eqn}[1]{\begin{equation}#1\end{equation}}
\newcommand{\eqan}[1]{\begin{align}#1\end{align}}

\newtheorem{thm}{Theorem}[section]
\newtheorem{lem}[thm]{Lemma}
\newtheorem{cor}[thm]{Corollary}
\newtheorem{prop}[thm]{Proposition}
\newtheorem{defn}[thm]{Definition}

\theoremstyle{definition}

\renewcommand{\le}{\leqslant} 
\renewcommand{\ge}{\geqslant} 
\renewcommand{\leq}{\leqslant} 
\renewcommand{\geq}{\geqslant}

\newcommand{\ind}{\mathds{1}}

\newcommand{\abs}[1]{\left\vert#1\right\vert}
\newcommand{\set}[1]{\left\{#1\right\}}

\newcommand{\equald}{\stackrel{\mathrm{d}}{=}}
\newcommand{\probc}{\stackrel{\sss\prob}{\longrightarrow}}

\usepackage{bbm}
\newcommand{\indic}[1]{\mathbbm{1}_{\{#1\}}}

     \let\gl=\lambda

\newcommand{\cE}{\mathcal{E}}

\newcommand{\cL}{\mathcal{L}}
\newcommand{\cM}{\mathcal{M}}
\newcommand{\cP}{\mathcal{P}}
\newcommand{\cT}{\mathcal{T}}

\newcommand{\vA}{\mathbf{A}}

\newcommand{\vQ}{\mathbf{Q}}

\newcommand{\ve}{\mathbf{e}}

\newcommand{\vs}{\mathbf{s}}\newcommand{\vt}{\mathbf{t}}\newcommand{\vu}{\mathbf{u}}
\newcommand{\vv}{\mathbf{v}}
 
\newcommand{\mvzeta}{\boldsymbol{\zeta}}

\newcommand{\fP}{\mathfrak{P}}

\newcommand{\fm}{\mathfrak{m}}

\newcommand{\bN}{\mathbb{N}}
\newcommand{\bR}{\mathbb{R}}
\newcommand{\bT}{\mathbb{T}}

\newcommand{\sE}{\mathscr{E}}

\DeclareMathOperator{\E}{\mathbb{E}}
\DeclareMathOperator{\pr}{\mathbb{P}}

\newcommand{\FF}{\mathcal{F}}
\newcommand{\bbT}{\mathbb{T}}
\newcommand{\TT}{\mathcal{T}}
\newcommand{\bs}{\mathbf{s}}

\newcommand{\bt}{\mathbf{t}}
\newcommand{\bfomega}{{\boldsymbol \omega}}
\newcommand{\Zbold}{{\mathbb{Z}}}
\newcommand{\prob}{\mathbb{P}}
\definecolor{aqua}{rgb}{0.0, 1.0, 1.0}
\definecolor{boo}{rgb}{1.0, 0.0, 1.0}

\newcommand{\probfr}{\stackrel{{\sss\mbox{$\prob$-\bf fr}}}{\longrightarrow}}
\newcommand{\probcrf}{\stackrel{{\sss\mbox{$\prob$-\bf efr}}}{\longrightarrow}}

\newcommand{\Efr}{\stackrel{{\sss\mbox{$\E$-\bf fr}}}{\longrightarrow}}

\newcommand{\convas}{\stackrel{\mathrm{a.s.}}{\longrightarrow}}
\newcommand{\convd}{\stackrel{d}{\longrightarrow}}
\newcommand{\convp}{{\stackrel{\sss\mathbb{P}}{\longrightarrow}}}

\newtheorem{lemma}[thm]{Lemma}

\newcommand{\stod}{\preceq_{\mathrm{st}}}
\DeclareMathOperator{\BP}{BP}

\newcommand{\age}{{\sf Age} }

\newcommand{\Sr}{{\sf SeRi} }

\newcommand{\fringe}{{\sf fringe} }

\usepackage[mathscr]{euscript}
\DeclareMathAlphabet{\mathscrbf}{OMS}{mdugm}{b}{n}

\DeclareFontFamily{U}{BOONDOX-calo}{\skewchar\font=45 }
\DeclareFontShape{U}{BOONDOX-calo}{m}{n}{
  <-> s*[1.05] BOONDOX-r-calo}{}
\DeclareFontShape{U}{BOONDOX-calo}{b}{n}{
  <-> s*[1.05] BOONDOX-b-calo}{}
\DeclareMathAlphabet{\mathcalb}{U}{BOONDOX-calo}{m}{n}
\SetMathAlphabet{\mathcalb}{bold}{U}{BOONDOX-calo}{b}{n}
\DeclareMathAlphabet{\mathbcalb}{U}{BOONDOX-calo}{b}{n}

\newcommand{\sss}{\scriptscriptstyle}

\usepackage{tikz}
\usetikzlibrary{calc,intersections,through,backgrounds,shapes.geometric}
\usetikzlibrary{graphs}

\newcommand{\Cod}{\mathcalb{CoD}}

\usetikzlibrary{calc,intersections,through,backgrounds,shapes.geometric}
\usetikzlibrary{graphs}
\tikzset{every path/.style={line width=.07 cm}}

\usepackage{subcaption}

\newcommand{\dd}{\mathrm{d}}
\newcommand{\eee}{\mathrm{e}}

\newcommand{\uu}{\mathbf{u}}

%% file: discrete_tree.tex

\begin{figure}[htbp]
\centering

\begin{tikzpicture}[scale=0.9, every node/.style={font=\normalsize}]
    \node[circle, draw, thick] (root) at (0,0) {$~u~$};
    \node[circle, draw, thick] (u1) at (-2,-1.8) {$u1$};
    \node[circle, draw, thick] (u2) at (-0.9,-1.8) {$u2$};
    \node[circle, draw, thick] (udu) at (1.8,-1.8) {$ud$};
    \foreach \x in { 0.2,0.5,0.8} {
            \draw[fill=black] (\x-0.1,-1.8) circle(1pt);
        }

    \draw[thick] (root) -- (u1);
    \draw[thick] (root) -- (u2);
    \draw[thick] (root) -- (udu);

    \begin{scope}[xshift=7cm, yshift=-1cm, scale=1.3] 
        \draw[thick] (-1.5,0)--(5,0); 

        \foreach \x/\label in {-1.5/1,0/\tau_u, 1.25/\tau_{u1}, 2.5/\tau_{u2}, 4.25/\tau_{u d(u,\vt)}, 5/n} {
            \draw[fill=black] (\x,0) circle(1pt);
            \node[below=3pt, font=\normalsize] at (\x,0) {$\label$};
        }
        
        \foreach \x in { 3,3.25, 3.5} {
            \draw[fill=black] (\x+0.25,0) circle(1pt);
        }
        \node[below=22pt, font=\normalsize] at (0,0) {$\tau_{u0}$};
        \node[below=20pt, rotate=90, font=\large] at (-0.15,0) {$=$};

        \node[below=20pt, rotate=90, font=\large] at (4.87,0) {$=$};
        \node[below=22pt, font=\normalsize] at (5,0) {$\tau_{u(d(u,\vt)+1)}$};

        \node[below=3pt, font=\normalsize] at (-1.5,0) {$1$};
    \end{scope}
\end{tikzpicture}
\caption{\small\textit{Left:} $B_{1}(u)$ in the Ulam-Harris notation, with offspring $u1, u2, \dots, u{d(u,\vt)}$ and $d=d(u,\vt)$. \\
\textit{Right:} A visual representation of the vertices \(\bbT^r_{o_n} \) in \([n]\) corresponding to the vertices in \(B_{1}(u)\), with the convention \(\tau_{u0}=\tau_{u}\) and \(\tau_{u(d(u,\vt)+1)}=n\).}
\label{fig:discrete_tree}
\end{figure}